\numberwithin{equation}{section}
\newtheorem{theorem}[equation]{Theorem} 
\newtheorem*{theorem*}{Theorem}
\newtheorem{lemma}[equation]{Lemma}
\newtheorem{proposition}[equation]{Proposition}
\newtheorem{corollary}[equation]{Corollary}
\newtheorem*{corollary*}{Corollary}
\newtheorem{conjecture}[equation]{Conjecture}
\theoremstyle{remark}
\newtheorem{definition}[equation]{Definition}
\newtheorem{example}[equation]{Example}
\newtheorem{notation}[equation]{Notation}
\theoremstyle{remark}
\newtheorem{remark}[equation]{Remark}
\newcommand{\cA}{{\mathcal A}}
\newcommand{\cB}{{\mathcal B}}
\newcommand{\cC}{{\mathcal C}}
\newcommand{\cD}{{\mathcal D}}
\newcommand{\cE}{{\mathcal E}}
\newcommand{\cF}{{\mathcal F}}
\newcommand{\cM}{{\mathcal M}}
\newcommand{\cO}{{\mathcal O}}
\newcommand{\cT}{{\mathcal T}}
\newcommand{\ch}{{\mathrm{ch}}}
\newcommand{\Td}{{\mathrm{Td}}}
\newcommand{\bbC}{\mathbb{C}}
\newcommand{\bbP}{\mathbb{P}}
\newcommand{\bbQ}{\mathbb{Q}}
\newcommand{\bbZ}{\mathbb{Z}}
\DeclareMathOperator{\SmProj}{SmProj} 
\DeclareMathOperator{\Id}{Id}
\DeclareMathOperator{\id}{id}
\DeclareMathOperator{\NChow}{NChow} 
\DeclareMathOperator{\Chow}{Chow} 
\DeclareMathOperator{\Num}{Num} 
\newcommand{\Ab}{\mathrm{Ab}}
\newcommand{\dgcat}{\mathsf{dgcat}}
\newcommand{\perf}{\mathrm{perf}}
\newcommand{\dg}{\mathsf{dg}}
\newcommand{\Hom}{\mathrm{Hom}}
\newcommand{\rep}{\mathrm{rep}}
\newcommand{\Var}{\mathsf{Var}}
\newcommand{\Cat}{\mathsf{Cat}}
\newcommand{\dgHo}{\mathsf{H}^0}
\newcommand{\Hmo}{\mathsf{Hmo}}
\newcommand{\op}{\mathsf{op}}
\newcommand{\too}{\longrightarrow}
\newcommand{\ie}{\textsl{i.e.}\ }
\newcommand{\eg}{\textsl{e.g.}}
\begin{document}

\title[Polarized intermediate Jacobians via NC motives]{From semi-orthogonal decompositions\\to polarized intermediate Jacobians\\via Jacobians of noncommutative motives}

\author{Marcello Bernardara and Gon{\c c}alo~Tabuada}

\address{Institut de Math\'ematiques de Toulouse \\ %
Universit\'e Paul Sabatier \\ %
118 route de Narbonne \\ %
31062 Toulouse Cedex 9\\ %
France}
\email{marcello.bernardara@math.univ-toulouse.fr} 
\urladdr{http://www.math.univ-toulouse.fr/~mbernard/}

\address{Gon{\c c}alo Tabuada, Department of Mathematics, MIT, Cambridge, MA 02139, USA}
\email{tabuada@math.mit.edu}
\urladdr{http://math.mit.edu/~tabuada/}
\thanks{G.~Tabuada was partially supported by a NSF CAREER Award}
\subjclass[2000]{14A22, 14C34, 14E08, 14J30, 14J45, 14K30, 18E30}
\date{\today}

\keywords{Intermediate Jacobians, polarizations, noncommutative motives, semi-orthogonal decompositions, Torelli theorem, Fano threefolds, blow-ups, quadric fibrations.}

\abstract{Let $X$ and $Y$ be complex smooth projective varieties, and $\cD^b(X)$ and $\cD^b(Y)$ the associated
bounded derived categories of coherent sheaves. Assume the existence of a triangulated category $\cT$ which is admissible both
in $\cD^b(X)$ as in $\cD^b(Y)$. Making use of the recent theory of Jacobians of noncommutative motives, we construct out of this categorical
data a morphism $\tau$ of abelian varieties (up to isogeny) from the product of the intermediate algebraic Jacobians of $X$
to the product of the intermediate algebraic Jacobians of $Y$. When the orthogonal complement $\cT^\perp$ of $\cT\subset\cD^b(X)$
has a trivial Jacobian (\eg\ when $\cT^\perp$ is generated by exceptional objects), the morphism $\tau$ is split injective. 
When this also holds for the orthogonal complement $\cT^\perp$ of $\cT \subset \cD^b(Y)$, $\tau$ becomes an isomorphism. Furthermore, in the case where
$X$ and $Y$ have a single intermediate algebraic Jacobian carrying a principal polarization, we prove that $\tau$ preserves
this extra piece of structure.}

As an application, we obtain categorical Torelli theorems, an incompatibility between two conjectures
of Kuznetsov (one concerning Fourier-Mukai functors and another one concerning Fano threefolds), a new proof of a classical theorem
of Clemens-Griffiths on blow-ups of threefolds, and several new results on quadric fibrations and intersection of quadrics.}
\maketitle 
\vskip-\baselineskip
\vskip-\baselineskip
\vskip-\baselineskip
\section{Introduction}\label{sec:introduction}
At their ICM address \cite{bondal_orlov:ICM2002}, Bondal-Orlov suggested that classical birational geometry could be
studied using bounded derived categories of coherent sheaves and their semi-orthogonal decompositions. This motivates the general questions:

\vspace{0.1cm}

{\bf Question:} {\it Does the bounded derived category $\cD^b(X)$ of a complex smooth projective variety $X$ carries information
about the intermediate Jacobians and their polarizations~? If so, how can they be ``extracted'' from $\cD^b(X)$ ?}

\vspace{0.1cm}

In the case of cubic threefolds, conic bundles, and more generally rationally representable Fano threefolds, some partial
answers are known; see
\cite{berna_macri_mehro_stella,bolognesi_bernardara:conic_bundles,bolognesi_bernardara:representability}.
The goal of this article is to show that if one replaces
$\cD^b(X)$ by its unique differential graded enhancement, then the above general questions admit precise affirmative~answers.

\subsection*{Polarized intermediate Jacobians}
Let $X$ be an irreducible smooth projective $\bbC$-scheme of dimension $d_X$. Recall from Griffiths \cite{Griffiths} the construction
of the Jacobians $J^i(X), 0 \leq i \leq d_X-1$. In contrast with the Picard $J^0(X)= \mathrm{Pic}^0(X)$ and the Albanese
$J^{d_X-1}(X)=\mathrm{Alb}(X)$ varieties, the intermediate Jacobians are {\em not} algebraic.  Nevertheless, they contain an algebraic
torus $J^i_a(X)\subseteq J^i(X)$ defined by the image of the Abel-Jacobi map
\begin{eqnarray}\label{eq:AbelJacobi}
AJ^i: A^{i+1}_\bbZ(X) \to J^i(X) && 0 \leq i \leq d_X-1\,,
\end{eqnarray}
where $A^{i+1}_\bbZ(X)$ stands for the group of algebraically trivial cycles of codimension $i+1$; consult Vial \cite[\S2.3]{Vial}
for further details. The map \eqref{eq:AbelJacobi} is surjective when $i=0$ and $i=d_X-1$ and so $J^0_a(X)=\mathrm{Pic}^0(X)$ and
$J^{d_X-1}_a(X)=\mathrm{Alb}(X)$. 

In general the abelian varieties $J^i_a(X)$ are only well-defined up to isogeny. However, in the case of curves, Fano threefolds,
even dimensional quadric fibrations over $\bbP^1$, odd dimensional quadric fibrations over rational surfaces, and also in the case of the intersection
of two (resp. three) quadrics of odd (resp. even) dimension, there is a single non-trivial algebraic Jacobian $J(X):=J_a^{(d_X-1)/2}(X)$ which carries moreover a canonical {\em principal polarization}; see Clemens-Griffiths \cite{clemensgriffiths}. This extra piece of structure is of
major importance. For instance, in the case of Fano threefolds $X$, the abelian variety $J(X)$ endowed with its canonical principal polarization contains
all the information about the birational class of $X$.
\subsection*{More detailed questions}
Let $X$ and $Y$ be two irreducible smooth projective $\bbC$-schemes of dimensions $d_X$ and $d_Y$, respectively, and $\cD^b(X)$ and $\cD^b(Y)$ the associated bounded derived categories of coherent sheaves; consult Huybrechts \cite[\S3]{huybrechts}. Assume that $X$ and $Y$ are related by the following {\bf categorical data:}

\vspace{0.2cm}

{\it There exist semi-orthogonal decompositions $\cD^b(X) = \langle \cT_X, \cT_X^{\perp}\rangle$ and $\cD^b(Y) = \langle\cT_Y, \cT_Y^{\perp}\rangle
$ (see \cite[\S4]{huybrechts}) and an equivalence $\phi: \cT_X \stackrel{\simeq}{\to} \cT_Y$ of triangulated categories}. 

\vspace{0.2cm}

Here is a list of examples of such categorical data:
\begin{itemize}
\item[(E0)] $X$ arbitrary, $\cT_X:= \cD^b(X)$, and $Y$ such that $\cT_Y:=\cD^b(Y)$ is equivalent to $\cD^b(X)$. This clearly holds when $X$ is isomorphic to $Y$. Thanks to the work of Bondal-Orlov, Bridgeland, and Mukai (see \cite{bondal-orlov,bridge-flop,mukai} and also \cite[\S11.4]{huybrechts}), this holds also when $X$ and $Y$ are two crepant resolutions of a threefold $Z$ with terminal singularities (\eg\ $X$ and $Y$ two birational Calabi-Yau threefolds), when $X$ and $Y$ are related by a Mukai flop, and also when $X$ is an abelian variety and $Y$ is its dual $\widehat{X}$.
\item[(E1)] $X$ arbitrary, $\cT_X:=\cD^b(X)$, and $Y$ obtained from $X$ by a standard flip or by blow-up along a smooth irreducible subscheme $Z \subseteq X$; see Orlov \cite{orlovprojbund}.
\item[(E2)] $X$ a hyperelliptic curve, $\cT_X:= \cD^b(X)$, and $Y$ a complete intersection of two even dimensional quadrics; see Bondal-Orlov \cite{bondal-orlov}.
\item[(E3)] $X$ a hyperelliptic or trigonal curve, $\cT_X:=\cD^b(X)$, and $Y$ a rational conic bundle over a Hirzebruch
surface or a rational del Pezzo fibration of degree $4$ over $\bbP^1$; see \cite{auel-bernar-bologn,bolognesi_bernardara:conic_bundles}.
In the same vein, $X\subset \bbP^3$ a smooth curve of genus $5$ and degree $7$ (or a smooth curve of genus $2$), $\cT_X:= \cD^b(X)$, and $Y$ a rational conic bundle over $\bbP^2$; see \cite{bolognesi_bernardara:conic_bundles}.
\item[(E4)] $X$ (resp. $Y$) a Fano threefold of index $1$ (resp. $2$), and $\cT_X$ (resp. $\cT_Y$) the orthogonal complement of an exceptional collection of objects in $\cD^b(X)$ (resp. in $\cD^b(Y)$); see Kuznetsov \cite{kuznetfanothreefolds}.
\item[(E5)] $X$ the intersection of a family of quadrics of Fano (or Calabi-Yau) type, $\cT_X$ the orthogonal complement of an exceptional collection of objects in $\cD^b(X)$, and $Y$ the fibration generated by the family of quadrics; see Kuznetsov \cite{kuznetquadrics}.
\item[(E6)] $X$ a quadric fibration (over a smooth projective $\bbC$-scheme $S$) endowed with a regular section, $Y$ the $\bbC$-scheme obtained from $X$ by hyperbolic reduction, and $\cT_X$ and $\cT_Y$ the derived categories of perfect complexes over the sheaves of even parts of Clifford algebras on $S$; see \cite{auel-bernar-bologn}.
\item[(E7)] $X$ arbitrary, $\cT_X:=\cD^b(X)$, and $Y \to X$ a flat fibration for which $Y$ can be embedded in a projective bundle $\bbP(E) \to X$ such that $\omega_{Y/X} = \cO_{\bbP(E)/X}(-l)_{\vert X}$ with $l >0$; see \cite{auel-bernar-bologn}. This example is inspired by Orlov's pioneering work \cite{orlovprojbund}.
\item[(E8)] $X$ an elliptic curve (resp. a curve of degree $42$), $\cT_X = \cD^b(X)$, and $Y$ a 5-dimensional linear section of
the Grassmannian $\mathrm{Gr}(2,6)$ (resp. $\mathrm{Gr}(2,7)$). We can also replace $42$ by $14$ and $\mathrm{Gr}(2,7)$
by the Pfaffian $\mathrm{Pf}(4,7)$; see 
Kuznetsov~\cite{kuznet-grass}.
\item[(E?)] {\em Homological projective duality} (see Kuznetsov \cite{kuznetsov:hpd}) is expected to provide a very large class of other examples.
\end{itemize}
In Examples (E2)-(E4) and (E5) (in the particular case of the intersection of two (resp. three) quadrics of even (resp. odd) dimension),
there is a single non-trivial algebraic Jacobian $J(X)$ (and $J(Y)$) which carries moreover a canonical principal polarization.
This holds also in same particular cases of the remaining examples (\eg\ even (resp. odd) dimensional quadric fibrations over
$\bbP^1$ (resp. over rational surfaces)); an exhaustive list can be found in \cite[Remark 3.8]{bolognesi_bernardara:representability}.
It is now natural to ask the following precise questions:

\vspace{0.1cm}

{\bf Question A:} {\it Does the above categorical data gives rise to a well-defined morphism $\tau$ of abelian varieties (up to isogeny) from $\prod_{i=0}^{d_X-1} J_a^i(X)$ to $\prod_{i=0}^{d_Y-1} J_a^i(Y)$~?}

\vspace{0.1cm}

{\bf Question B:} {\it In the above Examples (E2)-(E4) (and in all the other particular cases), does the morphism $\tau$ preserves the principal polarization ?}

\vspace{0.1cm}

In this article, making use of the recent theory of Jacobians of noncommutative motives \cite{MT}, we provide precise affirmative answers to these questions. Consult \S\ref{sec:app1}-\ref{sec:app-4} for several applications.
\subsection*{Jacobians of noncommutative motives}
Recall from Andr{\'e} \cite[\S4]{Andre} the construction of the (contravariant) functor $M_\bbQ(-):\SmProj(\bbC)^\op \to \Chow(\bbC)_\bbQ$
from smooth projective $\bbC$-schemes to Chow motives. De Rham cohomology $H^\ast_{dR}(-)$ factors through $\Chow(k)_\bbQ$. Hence, given an irreducible smooth projective $\bbC$-scheme $X$ of dimension $d_X$, one defines
\begin{eqnarray}\label{eq:NH}
NH_{dR}^{2i+1}(X):= \sum_{C,\gamma_i} \mathrm{Im} \big(H^1_{dR}(C) \stackrel{H^1_{dR}(\gamma_i)}{\too} H^{2i+1}_{dR}(X) \big) && 0 \leq i \leq d_X-1\,,
\end{eqnarray}
where $C$ is a smooth projective curve and $\gamma_i: M_\bbQ(C) \to M_\bbQ(X)(i)$ a morphism in $\Chow(\bbC)_\bbQ$. Intuitively speaking, \eqref{eq:NH} are the odd pieces of de Rham cohomology that are generated by curves. By restricting the classical intersection bilinear pairings on de Rham cohomology (see \cite[\S3.3]{Andre}) to these pieces one then obtains
\begin{eqnarray}\label{eq:pairings1}
\langle-,- \rangle : NH_{dR}^{2d_X-2i-1}(X) \times NH_{dR}^{2i+1}(X) \too \bbC && 0 \leq i \leq d_X-1\,.
\end{eqnarray}
Recall from \S\ref{sec:NCChow} the construction of the category $\NChow(\bbC)_\bbQ$ of noncommutative Chow motives. 
Examples of noncommutative Chow motives include finite dimensional $\bbC$-algebras of finite global dimension as well 
as the unique dg enhancements $\perf^\dg(X)$ (see Lunts-Orlov \cite{LO}) of the derived categories $\perf(X)$ of perfect 
complexes. Note that since $X$ is smooth, every complex of coherent sheaves is perfect (up to quasi-isomorphism). Hence, the canonical inclusion $\perf(X) \hookrightarrow \cD^b(X)$ is an equivalence of categories. Now, recall from \cite{MT} 
the construction of the {\em Jacobian} functor
$$ {\bf J}(-): \NChow(\bbC)_\bbQ \too \Ab(\bbC)_\bbQ$$
with values in the category of abelian $\bbC$-varieties up to isogeny. Among other properties, one 
has an isomorphism ${\bf J}(\perf^\dg(X)) \simeq \prod_{i=0}^{d_X-1} J_a^i(X)$ whenever the above pairings \eqref{eq:pairings1} 
are non-degenerate. As explained in {\em loc. cit.}, this is always the case for $i=0$ and $i=d_X-1$ and the remaining cases 
follow from Grothendieck's standard conjecture of Lefschetz type. Hence, the pairings \eqref{eq:pairings1} are non-degenerate 
for curves, surfaces, abelian varieties, complete intersections, uniruled threefolds, rationally connected fourfolds, 
and for any smooth hypersurface section, product, or finite quotient thereof (and if one trusts Grothendieck 
they are non-degenerate for all smooth projective $\bbC$-schemes).
\section{Statement of results}
Let $X$ and $Y$ be two irreducible smooth projective $\bbC$-schemes of dimensions $d_X$ and $d_Y$, respectively. Assume that $X$ and $Y$ are related by the above categorical data. In what follows, we will write $\Phi$ for the composition $ \perf(X) \to \cT_X \stackrel{\phi}{\simeq} \cT_Y \hookrightarrow \perf(Y)$, where the first functor is the projection.
\begin{conjecture}{(Kuznetsov \cite[Conjecture 3.7]{kuznetsov:hpd})}\label{conj:Kuznetsov1}
The functor $\Phi$ is of {\em Fourier-Mukai type}, \ie there exists a perfect complex $\cE \in \perf(X\times Y)$ such that $\Phi$ is isomorphic to the Fourier-Mukai functor $\Phi_\cE(-):= Rq_\ast(p^\ast(-)\otimes^L \cE)$, where $p:X\times Y \to X$ and $q:X\times Y \to Y$ are the projection morphisms. 
\end{conjecture}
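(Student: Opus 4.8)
The plan is to write $\Phi$ as a composition of three functors --- the projection $\pi\colon\perf(X)\to\cT_X$, the equivalence $\phi\colon\cT_X\stackrel{\simeq}{\to}\cT_Y$, and the inclusion $\iota\colon\cT_Y\hookrightarrow\perf(Y)$ --- and to prove that each of them is of Fourier-Mukai type. Since the convolution of Fourier-Mukai kernels represents the composition of the corresponding functors, producing a kernel $\cE\in\perf(X\times Y)$ for $\Phi$ is then formal.

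First I would dispose of the two geometric pieces. Because the semi-orthogonal decompositions are admissible, the inclusion $\cT_X\hookrightarrow\perf(X)$ admits both a left and a right adjoint; by Orlov's representability theorem, combined with Kuznetsov's description of the projection functors attached to a semi-orthogonal decomposition, these adjoints --- and hence $\pi$ --- are of Fourier-Mukai type. The inclusion $\iota$ is fully faithful and is cut out of the identity functor (with kernel $\cO_\Delta$), so it is of Fourier-Mukai type as well.

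The heart of the matter is the equivalence $\phi$. Here I would pass to the unique dg enhancements $\perf^\dg(X)$ and $\perf^\dg(Y)$ of Lunts-Orlov, which restrict to enhancements of $\cT_X$ and $\cT_Y$. Assuming $\phi$ lifts to a quasi-functor between these enhancements, \ie to a morphism in $\Hmo$, Toën's derived Morita theory represents it by a dg bimodule; a representability argument then identifies this bimodule with an object of $\perf^\dg(X\times Y)$, yielding a Fourier-Mukai kernel $\cE_\phi$ for $\phi$.

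The main obstacle is precisely this lifting step. A priori $\phi$ is only an equivalence of triangulated categories, and there is no formal mechanism forcing such an equivalence to arise from a quasi-functor; indeed, the existence of non-Fourier-Mukai triangulated functors shows that the lift can genuinely fail. Closing the gap would require either verifying that $\phi$ is of geometric origin --- so that a dg lift is visible --- or exploiting the uniqueness of enhancements together with the specific features of the categorical data relating $X$ and $Y$. This is exactly the point that keeps the assertion at the level of a conjecture.
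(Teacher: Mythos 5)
The statement you were asked to prove is not a theorem of the paper: it is Kuznetsov's Conjecture~\ref{conj:Kuznetsov1} from \cite{kuznetsov:hpd}, which the paper never proves but rather takes as a hypothesis (see item (i) of Theorem~\ref{thm:main1}), noting only that it is known to hold in Examples (E0)-(E3), (E5)-(E6), (E8) and (E?). So there is no proof in the paper to compare yours against, and the concession in your final paragraph --- that the lifting step cannot be closed --- is not a defect of your write-up but the actual mathematical state of affairs.

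That said, your reduction is essentially the content of the one result the paper does prove about this conjecture, namely Proposition~\ref{prop:FMequalsdg}: using To\"en's bijection $\mathrm{Iso}\,\perf(X\times Y)\simeq \Hom_{\Hmo(\bbC)}(\perf^\dg(X),\perf^\dg(Y))$ from \cite{Toen} together with the Lunts-Orlov uniqueness of enhancements \cite{LO}, the functor $\Phi$ is of Fourier-Mukai type if and only if it admits a dg enhancement, \ie lifts to a morphism in $\Hmo(\bbC)$. Your ``main obstacle'' paragraph identifies exactly this lifting problem, and you are right that no formal mechanism produces such a lift from a bare triangulated equivalence $\phi:\cT_X\simeq\cT_Y$; this is precisely why the statement remains a conjecture. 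Two smaller points. First, the projection $\pi_X$ is indeed of Fourier-Mukai type, but the relevant reference is Kuznetsov's base-change theorem \cite[Thm.~7.1]{kuznetbasechange} rather than Orlov's representability theorem, which applies to fully faithful functors between derived categories of smooth projective varieties and not to the projection onto an admissible subcategory. Second, your decomposition-plus-convolution strategy, while sound, only relocates the conjecture into the middle factor $\phi$, so it cannot by itself yield progress. In short: your assessment is accurate, your reduction agrees with the paper's Proposition~\ref{prop:FMequalsdg}, and the gap you name is irreducible with current technology rather than an oversight.
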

Thanks to the work of Bondal-Orlov, Kuznetsov, Orlov, and others
(see \cite{auel-bernar-bologn, bolognesi_bernardara:conic_bundles,bondal-orlov,kuznetsov:hpd,kuznetquadrics,orlovprojbund}),
Conjecture \ref{conj:Kuznetsov1} is known to be true in Examples (E0)-(E3), (E5)-(E6), (E8) and (E?).
As mentioned above, $\perf(X)$ and $\perf(Y)$ admit (unique) dg enhancements $\perf^\dg(X)$ and $\perf^\dg(Y)$. Let us then denote by
$\cT_X^\dg, \cT_X^{\perp,\dg}, \cT_Y^\dg, \cT_Y^{\perp,\dg}$ the inherited dg enhancements. Making use of them, we prove
that Kuznetsov's Conjecture~\ref{conj:Kuznetsov1} is equivalent to the existence of a dg enhancement $\Phi^\dg:\perf^\dg(X) \to \perf^\dg(Y)$ of $\Phi$;
see Proposition~\ref{prop:FMequalsdg}. This result is known to experts but, to the best
of authors' knowledge, no proof can be found in the litterature. Our first main result, which answers affirmatively to Question A, is the following:
\begin{theorem}\label{thm:main1}
Let $X$ and $Y$ be two irreducible smooth projective $\bbC$-schemes of dimensions $d_X$ and $d_Y$, respectively, that are related by the above categorical data. 
\begin{itemize}
\item [(i)] Assume that the bilinear pairings \eqref{eq:pairings1} (associated to $X$ and $Y$) are non-degenerate, and that Conjecture \ref{conj:Kuznetsov1} holds. Under these assumptions, one obtains a well-defined morphism
$\tau: \prod_{i=0}^{d_X-1} J^i_a(X) \to \prod_{i=0}^{d_Y-1} J^i_a(Y)$ in $\Ab(\bbC)_\bbQ$. 

\item[(ii)] Assume moreover that ${\bf J}(\cT_X^{\perp,\dg})=0$. This holds for instance whenever $\cT_X^\perp$ admits a full exceptional collection. Under this extra assumption, the morphism $\tau$ is split injective.

\item[(iii)] Assume furthermore that ${\bf J}(\cT_Y^{\perp,\dg})=0$. Under this extra assumption, the morphism $\tau$ becomes an isomorphism.
\end{itemize}
\end{theorem}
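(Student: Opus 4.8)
The plan is to reduce the statement to two structural facts. First, a semi-orthogonal decomposition refines, at the level of the canonical dg enhancements, to a direct sum decomposition in $\NChow(\bbC)_\bbQ$: the inclusion dg functors $\cT_X^\dg \hookrightarrow \perf^\dg(X)$ and $\cT_X^{\perp,\dg}\hookrightarrow \perf^\dg(X)$ assemble into an isomorphism $\perf^\dg(X) \simeq \cT_X^\dg \oplus \cT_X^{\perp,\dg}$, and likewise for $Y$. Second, the Jacobian functor ${\bf J}(-)\colon \NChow(\bbC)_\bbQ \to \Ab(\bbC)_\bbQ$ is additive ($\bbQ$-linear by construction), so it preserves these finite biproducts. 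Applying ${\bf J}$ to the two decompositions, and using the non-degeneracy of \eqref{eq:pairings1} together with the isomorphism ${\bf J}(\perf^\dg(X))\simeq \prod_{i=0}^{d_X-1}J^i_a(X)$ recalled in the introduction, I obtain
\[
\prod_{i=0}^{d_X-1}J^i_a(X)\ \simeq\ {\bf J}(\cT_X^\dg)\oplus {\bf J}(\cT_X^{\perp,\dg}),
\qquad
\prod_{i=0}^{d_Y-1}J^i_a(Y)\ \simeq\ {\bf J}(\cT_Y^\dg)\oplus {\bf J}(\cT_Y^{\perp,\dg}).
\]

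For item (i), I invoke Proposition~\ref{prop:FMequalsdg}: under Conjecture~\ref{conj:Kuznetsov1} the triangulated functor $\Phi$ admits a dg enhancement $\Phi^\dg\colon \perf^\dg(X)\to \perf^\dg(Y)$. Since dg functors define morphisms in $\NChow(\bbC)_\bbQ$, I simply set $\tau := {\bf J}(\Phi^\dg)$, a morphism in $\Ab(\bbC)_\bbQ$ between the two products above. Well-definedness of $\tau$ follows from the fact that any two dg enhancements of $\Phi$ induce isomorphic morphisms in $\NChow(\bbC)_\bbQ$, by the uniqueness properties of dg enhancements (Lunts--Orlov).

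For items (ii) and (iii) I compute $\tau$ through the decomposition, rather than appeal to an independent dg lift of $\phi$ (which need not exist). Precompose $\Phi^\dg$ with the two inclusions. The composite $\Phi^\dg \circ (\cT_X^{\perp,\dg}\hookrightarrow \perf^\dg(X))$ has $H^0$ equal to $\Phi$ restricted to $\cT_X^\perp$, which is zero because the first functor in $\Phi$ is the projection killing $\cT_X^\perp$; a dg functor inducing the zero triangulated functor is Morita-trivial, so this composite vanishes in $\NChow(\bbC)_\bbQ$ and $\tau$ vanishes on the summand ${\bf J}(\cT_X^{\perp,\dg})$. On the other summand, $P_Y^\dg\circ\Phi^\dg\circ(\cT_X^\dg\hookrightarrow \perf^\dg(X))$, where $P_Y^\dg\colon\perf^\dg(Y)\to\cT_Y^\dg$ is the projection dg functor, has $H^0$ equal to the equivalence $\phi$, hence is a Morita equivalence and induces an isomorphism ${\bf J}(\cT_X^\dg)\xrightarrow{\ \sim\ }{\bf J}(\cT_Y^\dg)$. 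Therefore $\tau$ factors as
\[
{\bf J}(\cT_X^\dg)\oplus {\bf J}(\cT_X^{\perp,\dg})
\xrightarrow{\ \mathrm{pr}\ }
{\bf J}(\cT_X^\dg)
\xrightarrow{\ \sim\ }
{\bf J}(\cT_Y^\dg)
\hookrightarrow
{\bf J}(\cT_Y^\dg)\oplus {\bf J}(\cT_Y^{\perp,\dg}).
\]
If ${\bf J}(\cT_X^{\perp,\dg})=0$ the first projection is an isomorphism, so $\tau$ is the inclusion of a direct summand, split by the projection induced by $P_Y^\dg$; this proves (ii) (when $\cT_X^\perp$ carries a full exceptional collection, $\cT_X^{\perp,\dg}$ is a direct sum of copies of $\perf^\dg(\bbC)$, whose Jacobian vanishes, forcing ${\bf J}(\cT_X^{\perp,\dg})=0$). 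If moreover ${\bf J}(\cT_Y^{\perp,\dg})=0$ the final inclusion is an isomorphism as well, whence $\tau$ is an isomorphism, proving (iii).

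I expect the main obstacle to be the middle step of (ii)--(iii): upgrading the triangulated-level statements ``$H^0$ is zero'' and ``$H^0$ is an equivalence'' to the dg/Morita statements that the corresponding functors are, respectively, Morita-trivial and Morita-equivalences, so that ${\bf J}$ detects them. This requires that the structural inclusion and projection functors of the decomposition are honest dg functors, and that a dg functor between pretriangulated enhancements of $\perf(X)$ and $\perf(Y)$ is controlled on $\NChow(\bbC)_\bbQ$ through its effect on $H^0$; it is here that the uniqueness of dg enhancements and Toën's derived Morita theory enter, and care is needed to keep the factorization compatible with the particular enhancement $\Phi^\dg$ furnished by Conjecture~\ref{conj:Kuznetsov1}.
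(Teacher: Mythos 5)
Your proposal is correct in substance, but for the split injectivity of item (ii) it takes a genuinely different route from the paper. The paper constructs an \emph{explicit} retraction of $\tau$: it passes to the right adjoint Fourier--Mukai kernel $\cE_R:=\cE^\vee\otimes^L p^\ast\omega_X[d_X]$, uses Caldararu--Willerton's $2$-category of kernels together with Lemma~\ref{lem:quasi-bimodules} (a morphism of dg functors whose $\dgHo$ is invertible induces a quasi-isomorphism of bimodules) to prove the commutativity of \eqref{eq:com2}, \ie $\Phi^\dg_{\cE_R}\circ\Phi^\dg_\cE\circ i_X^\dg=i_X^\dg$ in $\Hmo(\bbC)$, and then applies ${\bf J}(-)$ and Lemma~\ref{lem:key3}, so that the retraction is ${\bf J}(\Phi^\dg_{\cE_R})$. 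You never invoke adjoints or kernels beyond the existence of $\Phi^\dg$: you compute ${\bf J}(\Phi^\dg)$ through the biproduct decompositions ${\bf J}(\cT^\dg)\oplus{\bf J}(\cT^{\perp,\dg})$, using that $\Phi^\dg\circ i_X^{\perp,\dg}$ has vanishing $\dgHo$ and is therefore the zero morphism in $\Hmo_0(\bbC)$, while $\Phi^\dg\circ i_X^\dg$ is governed by the equivalence $\phi$. This is more economical for Theorem~\ref{thm:main1} itself; what the paper's heavier route buys is a description of the retraction $\sigma={\bf J}(\Phi^\dg_{\cE_R})$ by an explicit kernel, which is then essential in the proof of Theorem~\ref{thm:main2}, where the cycle-level computations of Lemmas~\ref{lem:key1}--\ref{lem:key2} are performed on $\ch(\cE_R)$ and not on some abstract splitting. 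Note also that your key factorization is essentially the commutative square that the paper itself asserts, with little justification, in its proof of item (iii).

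Two points need tightening. First, $P_Y^\dg$ is not an honest dg functor: as in Lemma~\ref{lem:projection} it exists only as a morphism in $\Hmo(\bbC)$ (a zig-zag through the full dg subcategory on the objects of $\cT_Y^\dg$ and $\cT_Y^{\perp,\dg}$), so ``$\dgHo$ of $P_Y^\dg\circ\Phi^\dg\circ i_X^\dg$'' is not directly defined. The fix is to argue as follows: since $\cT_Y\subset\perf(Y)$ is strictly full and $\dgHo(\Phi^\dg\circ i_X^\dg)\simeq i_Y\circ\phi$ takes values in $\cT_Y$, the honest dg functor $\Phi^\dg\circ i_X^\dg$ factors as $i_Y^\dg\circ G$ for an honest dg functor $G\colon\cT_X^\dg\to\cT_Y^\dg$ with $\dgHo(G)\simeq\phi$; both dg categories are pretriangulated, so $G$ is a quasi-equivalence, hence an isomorphism in $\Hmo(\bbC)$, and then $\tau\circ{\bf J}(i_X^\dg)={\bf J}(i_Y^\dg)\circ{\bf J}(G)$ together with $\tau\circ{\bf J}(i_X^{\perp,\dg})=0$ yields exactly your displayed factorization. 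Second, your claim in item (i) that $\tau$ is independent of the chosen enhancement ``by Lunts--Orlov uniqueness'' is both unjustified and unnecessary: Lunts--Orlov concerns uniqueness of enhancements of triangulated categories, not of dg lifts of functors, and by To\"en's bijection \eqref{eq:bijection-kernels1} non-isomorphic kernels give genuinely different morphisms in $\Hmo(\bbC)$. The theorem, as proved in the paper, only asserts that the categorical data together with a choice of Fourier--Mukai kernel produces a morphism $\tau$; no independence statement is made or needed.
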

As mentioned above, the bilinear pairings \eqref{eq:pairings1} are non-degenerate whenever the Grothendieck standard conjecture of Lefschetz
type holds. This is the case in Examples (E2)-(E5), (E6) whenever $\mathrm{dim}(S)\leq 2$, and in some particular cases of (E7); see Vial \cite[\S 7]{vial-fibrations}. In what concerns Examples (E0)-(E1), it suffices that the standard conjecture of Lefschetz type holds for $X$.
The extra assumption ${\bf J}(\cT_X^{\perp,\dg})=0$ holds in Examples (E0)-(E3) and (E7)-(E8) since $\cT_X^\perp=0$ and in Examples (E4)-(E5) since $\cT_X^\perp$ admits a full exceptional collection. In what concerns Example (E6), Kuznetsov \cite{kuznetquadrics} proved that $\cT_X^\perp$ admits a semi-orthogonal decomposition where each piece is a copy of $\perf(S)$. Hence, ${\bf J}(\cT_X^{\perp,\dg})=0$ whenever ${\bf J}(\perf^\dg(S))=0$.

\medskip

Now, in order to address Question B, we introduce the following notion:
\begin{definition}{(see Definition \ref{def:strong-repre})}\label{def:veryreprenstable-intro}
An irreducible smooth projective $\bbC$-scheme $X$ of odd dimension $d_X=2n+1$ is called {\em verepresentable}\footnote{The fusion of the words ``very'' and ``representable''.} if:
\begin{itemize}
\item[(i)] the group of algebraically trivial cycles $A_\bbZ^{i+1}(X)$ is trivial for $i\neq n$;
\item[(ii)] the group $A_\bbZ^{n+1}(X)$ admits an {\em algebraic representative} carrying an {\em incidence polarization};
 see \S\ref{sub:principal}. Intuitively speaking, $A^{n+1}_{\bbZ}(X)$ is identified in a universal way with a principally polarized abelian variety.
\item[(iii)] the Abel-Jacobi map $AJ^n(X) : A^{n+1}_{\bbZ}(X) \twoheadrightarrow J_a^n(X)$ gives rise to an isomorphism $A_\bbQ^{n+1}(X) \simeq J_a^n(X)_\bbQ$.
\end{itemize}
\end{definition}

As explained in Examples \ref{ex:very1}-\ref{ex:very3}, the $\bbC$-schemes $X$ and $Y$ of Examples (E2)-(E4) and (E5)
(in the particular case of the intersection of two (resp. three) quadrics of even (resp. odd) dimension) are verepresentable. By combining the above definition \eqref{eq:AbelJacobi} of $J_a^i(X)$ with item (i) of Definition~\ref{def:veryreprenstable-intro}
one observes that whenever $X$ is verepresentable, $J_a^i(X)=0$ for $i \neq n$.
Consequently, there is a single non-trivial algebraic Jacobian $J(X):=J^n_a(X)$ which, thanks to item (ii)
of Definition~\ref{def:veryreprenstable-intro}, carries a canonical principal polarization. Moreover, item (iii) of Definition~\ref{def:veryreprenstable-intro} shows that this principally polarized abelian variety is isomorphic, up to isogeny, to $A^{n+1}_{\bbZ}(X)$. Our second main result, which answers affirmatively to Question B, is the following:
\begin{theorem}\label{thm:main2}
Let $X$ and $Y$ be two irreducible smooth projective $\bbC$-schemes as in items (i)-(ii) of Theorem~\ref{thm:main1}. Assume moreover that $X$ and $Y$ are verepresentable
and that the canonical bundle of $X$ is ample, antiample, or trivial.
Under these assumptions, the split injective morphism $\tau: J(X) \to J(Y)$ preserves the principal polarization. When ${\bf J}(\cT_Y^{\perp,\dg})=0$ the morphism $\tau$ becomes an isomorphism.
\end{theorem}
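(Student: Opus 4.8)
The plan is to prove that $\tau$ is an isometry for the incidence polarizations, and then to read off the two assertions. Recall first that, by Definition~\ref{def:veryreprenstable-intro}(i), for a verepresentable $X$ of dimension $d_X=2n+1$ one has $J^i_a(X)=0$ for every $i\neq n$; hence the source $\prod_i J^i_a(X)$ of $\tau$ collapses to the single factor $J(X)=J^n_a(X)$, and likewise the target collapses to $J(Y)=J^m_a(Y)$ for $d_Y=2m+1$. By items (ii)-(iii) of the same definition the principal (incidence) polarization on $J(X)$ is, up to a positive scalar, the one attached to the restriction of the intersection pairing \eqref{eq:pairings1} (with $i=n$) to $NH^{2n+1}_{dR}(X)$, and similarly for $Y$. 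Since a morphism of abelian varieties up to isogeny preserves a principal polarization exactly when the induced map on the underlying rational Hodge structures intertwines the associated polarization forms, the theorem reduces to the assertion that $\tau$ is compatible with the pairings \eqref{eq:pairings1}.

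First I would realize $\tau$ as a cohomological correspondence. By Conjecture~\ref{conj:Kuznetsov1} the functor $\Phi$ is of Fourier-Mukai type, with a kernel $\cE\in\perf(X\times Y)$; the construction of $\tau$ in Theorem~\ref{thm:main1}(i) then identifies it, up to isogeny, with the map induced on algebraic Jacobians by the component of the correspondence $[\cE]_\ast$ sending $NH^{2n+1}_{dR}(X)$ to $NH^{2m+1}_{dR}(Y)$. Writing $\gamma$ for this component, the required compatibility is the identity
\[
\langle \gamma_\ast x,\,\gamma_\ast y\rangle_Y=\langle x,y\rangle_X, \qquad x,y\in NH^{2n+1}_{dR}(X).
\]
By the Poincar\'e-duality adjunction for correspondences, $\langle\gamma_\ast x,\gamma_\ast y\rangle_Y=\langle x,({}^{t}\gamma\circ\gamma)_\ast y\rangle_X$, so it suffices to show that ${}^{t}\gamma\circ\gamma$ acts as the identity on $NH^{2n+1}_{dR}(X)$, where ${}^{t}\gamma$ is the transpose correspondence.

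For the latter I would use that $\phi\colon\cT_X\stackrel{\simeq}{\to}\cT_Y$ is an equivalence. Its inverse is computed by the Fourier-Mukai kernel $\cE^R=\cE^\vee\otimes q^\ast\omega_Y[d_Y]$, and the relation $\phi^{-1}\circ\phi\simeq\id_{\cT_X}$ realizes on de Rham cohomology as $[\cE^R]_\ast\circ[\cE]_\ast=\id$ on the cohomology carried by $\cT_X$ (which, under the standing hypothesis ${\bf J}(\cT_X^{\perp,\dg})=0$, exhausts $NH^{2n+1}_{dR}(X)$). It therefore remains to match $[\cE^R]_\ast$ with the transpose ${}^{t}\gamma$ on the weight-$(2n+1)$ piece. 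This is exactly where the hypothesis on the canonical bundle is used: when $\omega_X$ (and the comparison twist by $\omega_Y$) is ample, antiample, or trivial, the Serre functor is realized on cohomology as a pure shift together with a Tate twist, so that the extra factor $\cE^\vee\otimes q^\ast\omega_Y[d_Y]$ relating the categorical adjoint to the naive transpose acts by a scalar on the relevant weight and leaves the alternating form invariant. Granting this, ${}^{t}\gamma\circ\gamma=\id$ on $NH^{2n+1}_{dR}(X)$, and the compatibility of the Jacobian functor ${\bf J}(-)$ with these pairings established in \cite{MT} upgrades the cohomological isometry to the statement that $\tau$ preserves the incidence polarization. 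Finally, if ${\bf J}(\cT_Y^{\perp,\dg})=0$, then Theorem~\ref{thm:main1}(iii) makes $\tau$ an isomorphism in $\Ab(\bbC)_\bbQ$; being an isometry, it is then an isomorphism of principally polarized abelian varieties.

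The main obstacle is the reconciliation carried out in the last step: the transpose correspondence ${}^{t}\gamma$ governs the adjoint for the cup-product (Poincar\'e) pairing, whereas the categorical inverse of $\phi$ is governed by the Serre adjoint kernel $\cE^R$, which a priori differs from ${}^{t}\gamma$ by a twist by $\omega_Y$, a shift, and the Todd-class corrections implicit in passing between the Mukai and cup-product pairings. Showing that, on the single weight $2n+1$, this discrepancy collapses to a scalar---so that an isometry for the Mukai pairing restricts to an isometry for the alternating polarization form---is the technical heart of the argument, and is precisely what the ample/antiample/trivial hypothesis on $\omega_X$ guarantees.
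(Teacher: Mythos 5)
Your proposal takes a genuinely different route from the paper --- Hodge-theoretic rather than cycle-theoretic --- but it has two genuine gaps, and the first one is fatal as written. The theorem, via Definition~\ref{def:strong-repre}, is a statement about the \emph{incidence} polarization: $\theta_X$ is characterized by the property $(G_X\circ f)^\ast\theta_X=(-1)^{n+1}I(z)$ for every algebraic map $f:T\to A^{n+1}_\bbZ(X)$ defined by a cycle $z$ (Definition~\ref{def:I(z)}). Your opening reduction asserts that items (ii)--(iii) of the definition identify $\theta_X$, ``up to a positive scalar'', with the polarization attached to the pairing \eqref{eq:pairings1} on $NH^{2n+1}_{dR}(X)$. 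No such identification is contained in the definition; it is a nontrivial classical comparison (carried out by Clemens--Griffiths and Beauville only for specific families), and the scalar is not harmless anyway, since a non-unit multiple of a principal polarization is no longer principal (Definition~\ref{def:polarization}), so ``preserves the principal polarization'' would not follow. The paper never needs this comparison: it proves (Lemma~\ref{lem:key1}) that $\tau$ is induced on $A^{n+1}_\bbQ(X)$ by the algebraic cycle $\ch_{m+n+1}(\cE)$, reads this cycle as an algebraic map $\tilde{\tau}:J(X)\to A^{m+1}_\bbZ(Y)$ with $G_Y\circ\tilde{\tau}=\tau$, and then applies the incidence property of $\theta_Y$ directly to get $\tau^\ast\theta_Y=(-1)^{m+1}I(\ch_{m+n+1}(\cE))$; the identities $\sigma_1\circ\tau=(-1)^{m+n}\mathfrak{i}$ and $\sigma\circ\tau=\id$ then force $\tau^\ast\theta_Y$ to be principal.

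The second gap is the step you yourself flag as the technical heart: it is not carried out, and the mechanism you invoke is incorrect. The Serre functor $-\otimes\omega_X[d_X]$ does not act on cohomology ``as a pure shift together with a Tate twist'': the twist acts through multiplication by $\ch(\omega_X)=e^{c_1(\omega_X)}$, which is not a scalar unless $c_1(\omega_X)=0$. In the paper the ample/antiample hypothesis enters through the Bondal--Orlov reconstruction theorem, which converts the autoequivalence $-\otimes\,\omega_X$ into an honest automorphism $\psi$ of $X$ (Lemma~\ref{lem:key2}), whose pullback $\psi^\ast$ tautologically respects the principal polarization; and the degree-shifting terms of the Chern character are killed not by a weight argument but by the vanishing $A^{j}_\bbQ(X)=0$ for $j\neq n+1$ built into verepresentability. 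Your cohomological variant could plausibly be repaired in the same spirit (the degree-shifted images land in $NH$-pieces that vanish for a verepresentable scheme, since $J^i_a=0$ for $i\neq n$), but you would still need to supply that argument, and you would also need to track the signs $(-1)^{m+1}$ and $(-1)^{m+n}$ coming from the incidence convention and from $\ch_{m+n+1}(\cE^\vee[d_X])=(-1)^{m+n}\ch_{m+n+1}(\cE)$: these are exactly what make $\tau^\ast\theta_Y$ positive, i.e.\ a polarization at all, and your proposal tracks none of them.
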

Note that the canonical bundle of $X$ is always ample, antiample, or trivial in Examples (E2)-(E5), and (E8).
The extra assumption ${\bf J}(\cT_Y^{\perp,\dg})=0$ holds in Example (E0)
since $\cT_Y^\perp=0$ and in Examples (E2)-(E5) and (E8) since $\cT_Y^\perp$ admits a full exceptional collection. 
In conclusion, Examples (E2)-(E3) and (E5) (in the particular case of the intersection of two (resp. three) quadrics of even
(resp. odd) dimension) satisfy all the assumptions of Theorem~\ref{thm:main2}. In what concerns Example (E4),
this is also the case if one assumes Kuznetsov's conjecture \ref{conj:Kuznetsov1}. 
In what concerns Example (E8), all the assumptions of Theorem \ref{thm:main2} are satisfied, except the verepresentability
of $Y$ and the non-degeneracy of the pairings \eqref{eq:pairings1} for $Y$; see Conjecture \ref{conj:grass-pfaff}.
\begin{remark}
Due to their generality, functoriality, and simplicity, Theorems \ref{thm:main1} and \ref{thm:main2}
should provide a useful toolkit of every mathematician working with derived categories of schemes.
For example, Theorem \ref{thm:main1} has recently played a key role in the proof of new cases of a conjecture of
Paranjape-Srinivas on Chow groups of intersections of quadrics; see \cite{marcello-goncalo-chowgroups}.
\end{remark}

\medbreak\noindent\textbf{Notations:} Throughout the article we will work always over the field $\bbC$ of complex numbers.
All $\bbC$-schemes will be assumed to be smooth, proper, and irreducible.

\medbreak\noindent\textbf{Acknowledgments:} The authors are very grateful to Thomas Dedieu, Valery Lunts, Alexander Polishchuk, Paolo Stellari, Michel Vaqui\'e and Charles Vial for useful discussions, e-mail exchanges, and motivating questions. G.~Tabuada would like also to thank the MSRI for its hospitality and excellent working conditions.
\section{Application I: Categorical Torelli}\label{sec:app1}
Let $X$ and $Y$ be two verepresentable $\bbC$-schemes. The (generalized) {\em Torelli theorem} claims that $X\simeq Y$
if and only if $J(X)\simeq J(Y)$ as principally polarized abelian varieties. The particular case of curves (\ie the classical Torelli theorem) was proved by Torelli \cite{torelli} one hundred years ago. 
Thanks to the work of Clemens-Griffiths, Debarre, Donagi, Laszlo, M\'erindol, and Voisin
(see \cite{clemensgriffiths,debarre-qci,debarre-quartic-double,donagi-torelli,laszlo,merindol,voisin-quartic-double}), this holds also in the case of cubic threefolds, quartic double solids, and intersections of two (resp. three) quadrics of even (resp. odd) dimension. Theorem~\ref{thm:main2} furnishes us automatically the following categorification: 

\begin{corollary}{(Categorical Torelli)}\label{cor:Torelli}
Let $X$ and $Y$ be two $\bbC$-schemes as in Theorem~\ref{thm:main2} with ${\bf J}(\cT^{\perp,\dg}_Y)=0$.
Whenever the (generalized) Torelli theorem holds, the $\bbC$-schemes $X$ and $Y$ are isomorphic.
\end{corollary}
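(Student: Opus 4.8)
The plan is to chain together the two structural inputs the corollary rests on: Theorem~\ref{thm:main2}, which produces the morphism $\tau$ of principally polarized abelian varieties, and the hypothesis that the (generalized) Torelli theorem holds for the verepresentable $\bbC$-schemes $X$ and $Y$. The entire content of the corollary is that these two facts compose. First I would invoke Theorem~\ref{thm:main2} under its stated hypotheses: since $X$ and $Y$ are as in Theorem~\ref{thm:main2} (hence satisfy items (i)--(ii) of Theorem~\ref{thm:main1}, are verepresentable, and $X$ has ample, antiample, or trivial canonical bundle) and we additionally assume ${\bf J}(\cT^{\perp,\dg}_Y)=0$, the theorem furnishes an \emph{isomorphism} $\tau: J(X) \stackrel{\simeq}{\to} J(Y)$ in $\Ab(\bbC)_\bbQ$ which, by the last sentence of Theorem~\ref{thm:main2}, preserves the principal polarization.

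**Next I would translate this isogeny-level isomorphism into an honest isomorphism of principally polarized abelian varieties.** Here one must be careful: $\tau$ lives in $\Ab(\bbC)_\bbQ$, i.e. it is a priori only an isomorphism up to isogeny. The decisive point is that $J(X)$ and $J(Y)$ carry \emph{principal} polarizations and that $\tau$ is compatible with them. A principal polarization rigidifies an abelian variety: an isogeny that matches principal polarizations is forced to be an actual isomorphism (the degree of a polarized isogeny is controlled by the ratio of the degrees of the polarizations, and for two principal polarizations this ratio is $1$). So from the polarization-preserving isomorphism in $\Ab(\bbC)_\bbQ$ one extracts a genuine isomorphism $(J(X),\theta_X) \simeq (J(Y),\theta_Y)$ of principally polarized abelian varieties. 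This is the step I expect to require the most care, since one has to make precise exactly in what sense ``$\tau$ preserves the principal polarization'' upgrades the isogeny to an isomorphism on the nose; the verepresentability hypothesis, via item (iii) identifying $A^{n+1}_\bbQ$ with $J^n_a$, is what guarantees the polarization is principal and canonically attached.

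**Finally I would apply the assumed Torelli theorem.** By hypothesis the (generalized) Torelli theorem holds for $X$ and $Y$, which by definition asserts that $X \simeq Y$ \emph{if and only if} $J(X) \simeq J(Y)$ as principally polarized abelian varieties. Having produced precisely such an isomorphism in the previous step, the Torelli theorem immediately yields $X \simeq Y$, completing the proof. The argument is therefore entirely formal once Theorem~\ref{thm:main2} is in hand: the corollary is a direct ``categorification'' in the sense that the categorical data relating $X$ and $Y$ is converted by Theorem~\ref{thm:main2} into the geometric input (a principally polarized isomorphism of Jacobians) that the classical Torelli statement consumes. The only genuine subtlety is the isogeny-versus-isomorphism issue, and the principality of the polarizations is exactly what resolves it.
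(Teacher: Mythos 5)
Your proposal is correct and takes essentially the same route as the paper: the paper gives no separate proof, stating that Theorem~\ref{thm:main2} ``furnishes automatically'' the corollary, which is exactly the composition you describe (polarized isomorphism of Jacobians from Theorem~\ref{thm:main2}, then the assumed Torelli theorem). The isogeny-versus-isomorphism subtlety you flag is real, and your degree argument for polarized isogenies is a valid way to settle it; the paper resolves the same point inside the proof of Theorem~\ref{thm:main2} via Lemma~\ref{lemma:from-uptoiso-to-iso} and the incidence-polarization computation (and, in the analogous blow-up statement, by a cokernel argument), so nothing essential differs.
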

We now illustrate Corollary~\ref{cor:Torelli} in four different situations:
\begin{itemize}
\item[(i)] {\bf Intersections of two even dimensional quadrics:} let $X$ and $Y$ be intersections of two even dimensional quadrics, and $C_X$ and $C_Y$ the associated
hyperelliptic curves; see Reid \cite[Thm.~1.10]{reid:thesis}. As  proved by Bondal-Orlov~\cite{bondal-orlov}, one has fully faithful functors
$\cT_X:=\perf(C_X) \to \perf(X)$ and $\cT_Y:=\perf(C_Y) \to \perf(Y)$ whose orthogonal complements are generated
by exceptional objects. Moreover, $\perf(C_X) \simeq \perf(\bbP^1,\cC_{0,X})$ (resp. 
$\perf(C_Y) \simeq \perf(\bbP^1,\cC_{0,Y})$), where $\cC_{0,X}$ (resp. $\cC_{0,Y}$) is the sheaf of the even
parts of the Clifford algebra associated to the quadric span $Q_X \to \bbP^1$ (resp. $Q_Y \to \bbP^1$); see Kuznetsov \cite[Corollary 5.7]{kuznetquadrics}. Since $X$ and $Y$ are complete intersections, the pairings
\eqref{eq:pairings1} are non-degenerate. Moreover, $X$ and $Y$ are verepresentable (see \cite[Thm. 1.5]{marcello-goncalo-chowgroups} for item (i),
Reid \cite{reid:thesis} for item (ii) and Donagi \cite[Thm.~3.3]{donagi-torelli} for item (iii))
and the canonical bundle of $X$ is either trivial (when $X$ is an elliptic curve and $C_X \simeq X)$ or antiample. Now, suppose that there exists an equivalence $\phi: \perf(C_X) \simeq \perf(C_Y)$ of triangulated categories. Under such assumptions, the composed functor $\Phi:\perf(X) \to \perf(Y)$ is of Fourier-Mukai type since this is the case of the projection $\perf(X) \to \perf(C_X)$ (see \cite[Thm.~7.1]{kuznetbasechange}), the equivalence $\phi:\perf(C_X) \simeq \perf(C_Y)$ (see Orlov \cite{orlov-represent}), and also of the inclusion $\perf(C_Y) \hookrightarrow \perf(Y)$ (see Bondal-Orlov  \cite{bondal-orlov}). As a consequence, all the assumptions of Theorem~\ref{thm:main2} are satisfied.
The (generalized) Torelli theorem holds in this case; see Donagi \cite{donagi-torelli}. Hence, we obtain from Corollary~\ref{cor:Torelli} the following implication:
\begin{equation*}
\perf(C_X) \simeq \perf(C_Y) \Rightarrow X \simeq Y\,.
\end{equation*}
\item[(ii)] {\bf Intersections of three odd dimensional quadrics:} let $X$ and $Y$ be intersections of three odd dimensional quadrics
of Fano type, and $\cC_{0,X}$ (resp. $\cC_{0,Y}$) the sheaf of the even
parts of the Clifford algebra associated to the quadric span $Q_X \to \bbP^2$
(resp. $Q_Y \to \bbP^2$). Similarly to item (i), the triangulated category $\cT_X:= \perf(\bbP^2,\cC_{0,X})$
(resp. $\cT_Y:= \perf(\bbP^2,\cC_{0,Y})$) embeds in $\perf(X)$ (resp. in $\perf(Y)$) and its orthogonal
complement is generated by exceptional objects. Since $X$ and $Y$ are complete intersections the bilinear pairings
\eqref{eq:pairings1} are non-degenarate. Moreover, $X$ and $Y$ are verepresentable (see \cite[Thm. 1.5]{marcello-goncalo-chowgroups} for item (i)
and Beauville \cite[Thm.~6.3]{beauvilleprym} for items (ii)-(iii))
and the canonical bundle of $X$ is antiample. Now, suppose that there exists an equivalence $\phi: \perf(\bbP^2,\cC_{0,X}) \simeq 
\perf(\bbP^2,\cC_{0,Y})$ of triangulated categories. In contrast with item (i), it is not known if the composed functor 
$\Phi:\perf(X) \to \perf(Y)$ is of Fourier-Mukai type; only the projection $\perf(X) \to \perf(\bbP^2,\cC_{0,X})$ and the
embedding $\perf(\bbP^2,\cC_{0,Y}) \hookrightarrow \perf(Y)$ are known to be of Fourier-Mukai type; see
\cite[Thm.~7.1]{kuznetbasechange}\cite{kuznetquadrics}. As a consequence, if one assumes that 
$\perf(\bbP^2,\cC_{0,X})$ and $\perf(\bbP^2,\cC_{0,Y})$ are Fourier-Mukai equivalent, we obtain from
Corollary~\ref{cor:Torelli} the following implication
\begin{equation*}
\perf(\bbP^2,\cC_{0,X}) \,\, \mathrm{Fourier}\text{-}\mathrm{Mukai}\,\, \mathrm{equivalent} \,\, \mathrm{to}\,\,\perf(\bbP^2,\cC_{0,Y}) \Rightarrow X\simeq Y\,.
\end{equation*}
\item[(iii)] {\bf Quartic double solids:} let $X$ and $Y$ be quartic double solids. As proved by Kuznetsov \cite[\S 3]{kuznetfanothreefolds}, one has the following semi-orthogonal decompositions
\begin{eqnarray}\label{eq:semi-orthogonal}
\perf(X) =\langle \cT_X,\langle \cO_X,\cO_X(1)\rangle \rangle && \perf(Y)=\langle \cT_Y,\langle \cO_Y,\cO_Y(1)\rangle \rangle\,. 
\end{eqnarray}
Since $X$ and $Y$ are Fano (hence uniruled) threefolds, the pairings
\eqref{eq:pairings1} are non-degenerate. Moreover, $X$ and $Y$ are verepresentable (see Clemens or Tihomirov \cite{clemens4ic, tihoquarticsolid})
and the canonical bundle of the Fano variety $X$ is antiample. Now, suppose that there exists an equivalence $\phi: \cT_X \simeq \cT_Y$ of triangulated categories and that the composed functor $\Phi: \perf(X) \to \perf(Y)$ is of Fourier--Mukai type.
As a consequence, all the assumptions of Theorem~\ref{thm:main2} are satisfied.
Thanks to the work of Debarre \cite{debarre-quartic-double} and Voisin \cite{voisin-quartic-double},
the (generalized) Torelli theorem holds in this case. Hence, we obtain from Corollary~\ref{cor:Torelli} the following implication
\begin{equation}\label{eq:implication-iii}
\cT_X\simeq \cT_Y + \Phi \,\,\mathrm{Fourier}\text{-}\mathrm{Mukai} \,\,\Rightarrow X\simeq Y\,.
\end{equation}
\item[(iv)] {\bf Cubic threefolds:} let $X$ and $Y$ be cubic threefolds.
As proved by Kuznetsov \cite{kuznet-v14},
one has the following semi-orthogonal decompositions
\begin{eqnarray}\label{eq:semi-orthogonal2}
\perf(X) =\langle \cT_X,\langle \cO_X,\cO_X(1)\rangle \rangle && \perf(Y)=\langle \cT_Y,\langle \cO_Y,\cO_Y(1)\rangle \rangle\,. 
\end{eqnarray}
Similarly to item (iii), the pairings \eqref{eq:pairings1} are non-degenerate and the canonical bundle of $X$ is antiample. Now, suppose that there exists an equivalence $\phi: \cT_X \simeq \cT_Y$ of triangulated categories and that the composed functor $\Phi: \perf(X) \to \perf(Y)$ is of Fourier--Mukai type.
As a consequence, all the assumptions of Theorem~\ref{thm:main2} are satisfied. Thanks to the work of Clemens and Griffiths \cite{clemensgriffiths}, $X$ and $Y$ are verepresentable and the (generalized)
Torelli theorem holds. Hence, we obtain from Corollary~\ref{cor:Torelli} the implication \eqref{eq:implication-iii}.
\end{itemize}
Corollary~\ref{cor:Torelli} admits the following partial converse:
\begin{proposition}\label{prop:double-torelli}
Consider the following three cases:
\begin{itemize}
\item[(i)] $X$ and $Y$ quartic double solids (resp. cubic threefolds) and $\cT_X$ and $\cT_Y$ as in \eqref{eq:semi-orthogonal} (resp. as in \eqref{eq:semi-orthogonal2});
\item[(ii)] $X$ and $Y$ complete intersections of two even dimensional quadrics, $\cT_X:=\perf(\bbP^1,\cC_{0,X})\simeq \perf(C_X)$ and $\cT_Y:=\perf(\bbP^1,\cC_{0,Y})\simeq \perf(C_Y)$;
\item[(iii)] $X$ and $Y$ complete intersections of three odd dimensional quadrics,  $\cT_X:= \perf(\bbP^2,\cC_{0,X})$
and $\cT_Y:=\perf(\bbP^2,\cC_{0,Y})$.
\end{itemize}
In each one of the above three cases, the schemes $X$ and $Y$ are isomorphic if and only if the triangulated categories
$\cT_X$ and $\cT_Y$ are equivalent and the composed functor $\Phi: \perf(X) \to \perf(Y)$ is of Fourier--Mukai
type. Assuming Conjecture
\ref{conj:Kuznetsov1}, this happens if and only if $\cT_X$ is equivalent to $\cT_Y$.
\end{proposition}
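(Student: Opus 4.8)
The plan is to prove the biconditional by separating the two implications and then to read off the conjectural refinement. The implication ``categorical data $\Rightarrow X \simeq Y$'' is nothing but Corollary~\ref{cor:Torelli}: in each of the three cases the hypotheses of Theorem~\ref{thm:main2} together with ${\bf J}(\cT_Y^{\perp,\dg})=0$ were already checked in items (i)--(iv) above, and the (generalized) Torelli theorem is classically known (Donagi for two even-dimensional quadrics, Beauville for three odd-dimensional quadrics, Debarre--Voisin for quartic double solids, Clemens--Griffiths for cubic threefolds). Hence an equivalence $\cT_X \simeq \cT_Y$ together with a Fourier--Mukai structure on $\Phi$ forces $X\simeq Y$, and no further argument is needed for this direction.

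The genuine content is the converse. Given an isomorphism $f\colon X \isoto Y$, I would set $g:=f^{-1}$ and consider the equivalence $g^\ast\colon \perf(X) \isoto \perf(Y)$, which is of Fourier--Mukai type (its kernel is the structure sheaf of the graph of $g$). The crucial step is to show that $g^\ast$ carries the semi-orthogonal decomposition of $\perf(X)$ onto that of $\perf(Y)$. In the Fano cases the exceptional objects $\cO_X,\cO_X(1)$ are intrinsically attached to $X$, with $\cO_X(1)$ the distinguished square root of the anticanonical bundle; thus any isomorphism preserves the pair $\langle \cO_X, \cO_X(1)\rangle$ and hence its orthogonal complement $\cT_X$. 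In the quadric cases one checks that $f$ induces an isomorphism of the defining pencil (resp.\ net) of quadrics, hence of the base $\bbP^1$ (resp.\ $\bbP^2$) and of the sheaf of even parts of the Clifford algebra, so that $g^\ast$ identifies $\cT_X=\perf(\bbP^1,\cC_{0,X})$ (resp.\ $\perf(\bbP^2,\cC_{0,X})$) with $\cT_Y$. Either way, $g^\ast$ restricts to an equivalence $\phi:=g^\ast|_{\cT_X}\colon \cT_X \isoto \cT_Y$.

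It then remains to check that the associated functor $\Phi$ is of Fourier--Mukai type. By construction $\Phi$ is the composition $\perf(X) \to \cT_X \stackrel{\phi}{\simeq} \cT_Y \hookrightarrow \perf(Y)$, and I would verify that each of the three factors is Fourier--Mukai: the projection by Kuznetsov's base-change theorem \cite[Thm.~7.1]{kuznetbasechange}, the equivalence $\phi$ because it is the restriction of the Fourier--Mukai equivalence $g^\ast$ (equivalently, because it is induced by an isomorphism of the curves, resp.\ of the Clifford data, whence \cite{orlov-represent} applies), and the inclusion because $\cT_Y$ is an admissible subcategory whose embedding is Fourier--Mukai. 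As a composition of Fourier--Mukai functors, $\Phi$ is itself Fourier--Mukai, which completes the converse. Finally, for the conjectural refinement I would note that Conjecture~\ref{conj:Kuznetsov1} asserts exactly that $\Phi$ is always of Fourier--Mukai type; granting it, the second clause on the right-hand side becomes automatic and the biconditional collapses to $X\simeq Y \Leftrightarrow \cT_X \simeq \cT_Y$.

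The main obstacle is the geometric compatibility in the second paragraph: verifying that an abstract isomorphism of the two varieties respects the chosen semi-orthogonal decomposition. For the Fano threefolds this is immediate from the canonicity of the (anti)canonical polarization, but for the complete intersections of quadrics it requires recovering the defining linear system of quadrics---and thereby the Clifford data---intrinsically from $X$ via its (anti)canonical embedding, so that $f$ descends to the base and to the Clifford sheaves. Everything else, namely the Fourier--Mukai composition and the invocation of Corollary~\ref{cor:Torelli}, is formal.
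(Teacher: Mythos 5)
Your skeleton is the same as the paper's: the implication from the categorical data to $X\simeq Y$ is exactly Corollary~\ref{cor:Torelli} (the verepresentability, non-degeneracy and Torelli inputs having been checked case by case in \S\ref{sec:app1}), and the converse starts, as in the paper, from the Fourier--Mukai equivalence $f^\ast\colon \perf(Y)\isoto\perf(X)$ induced by an isomorphism $f$, the whole point being to show that it respects the two semi-orthogonal decompositions. Your treatment of case (i) via the intrinsic characterization of $\cO_X(1)$ (the generator of $\mathrm{Pic}(X)\simeq\bbZ$, pinned down by $\omega_X\simeq\cO_X(-2)$) is precisely the paper's argument, and your closing remarks (composition of Fourier--Mukai functors, collapse of the statement under Conjecture~\ref{conj:Kuznetsov1}) also match the paper.

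The divergence is in cases (ii)--(iii), and there what you propose is both harder than necessary and incomplete. The paper never recovers the pencil (resp.\ net) of quadrics nor the Clifford sheaves: it observes that in every case of dimension at least $3$, $X$ and $Y$ are again Fano of Picard number one and index $i$, with $\cT_X^\perp=\langle\cO_X,\ldots,\cO_X(i-1)\rangle$ by Kuznetsov's decompositions \cite{kuznet-v14,kuznetfanothreefolds,kuznetquadrics}, so the very same Picard-group argument you use in case (i) applies verbatim: $f^\ast\cO_Y(j)\simeq\cO_X(j)$, hence $f^\ast\cT_Y^\perp=\cT_X^\perp$ and $f^\ast$ restricts to $\cT_Y\simeq\cT_X$; no descent of $f$ to the base $\bbP^1$ or $\bbP^2$, nor to $\cC_0$, is needed. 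Moreover, your recipe genuinely fails in the degenerate case of (ii) where $X$ and $Y$ are elliptic curves (intersections of two quadric surfaces in $\bbP^3$): there $\omega_X$ is trivial, the embedding into $\bbP^3$ is given by a non-intrinsic degree-$4$ line bundle, and an abstract isomorphism $X\simeq Y$ need not match the two pencils. The paper disposes of this case separately and trivially, since then $C_X=X$, $C_Y=Y$ and $\cT_X=\perf(X)$, $\cT_Y=\perf(Y)$, so the pullback equivalence itself serves as $\phi$ and $\Phi$ is Fourier--Mukai. As written, the central step of your proof for (ii)--(iii) is the acknowledged but unsolved ``main obstacle''; replacing it by the index/Picard argument you already use in (i), together with the separate elliptic-curve case, closes the proof.
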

\begin{remark}
Assuming Conjecture \ref{conj:Kuznetsov1}, Proposition \ref{prop:double-torelli} generalizes both implications of the main result
of \cite{berna_macri_mehro_stella}. Note that in the above item (ii) the composed functor $\Phi$ is always of Fourier--Mukai type.
\end{remark}
Let us consider now Example (E8). Motivated by Theorems \ref{thm:main1} and \ref{thm:main2}, we formulate the following conjecture:
\begin{conjecture}\label{conj:grass-pfaff}
Whenever $X$ and $Y$ are as in Example (E8), $Y$ is verepresentable and $J(Y) \simeq J(X)$ as
a principally polarized abelian variety.
\end{conjecture}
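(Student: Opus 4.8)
The plan is to deduce the conjecture from Theorem~\ref{thm:main2}, by checking that in Example (E8) all but two of its hypotheses are already in place. Indeed, here $\cT_X=\cD^b(X)=\perf(X)$, so $\cT_X^\perp=0$ and ${\bf J}(\cT_X^{\perp,\dg})=0$; the curve $X$ is verepresentable with $J(X)=\mathrm{Jac}(X)$ principally polarized by its theta divisor; the canonical bundle of $X$ is trivial (elliptic case) or ample (higher genus); Conjecture~\ref{conj:Kuznetsov1} is known by Kuznetsov~\cite{kuznet-grass}; and $\cT_Y^\perp$ admits a full exceptional collection, whence ${\bf J}(\cT_Y^{\perp,\dg})=0$. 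The only remaining hypotheses are (a) the non-degeneracy of the pairings~\eqref{eq:pairings1} associated to $Y$ and (b) the verepresentability of $Y$. Once (a) and (b) are established, Theorem~\ref{thm:main2} yields directly the isomorphism $J(X)\simeq J(Y)$ of principally polarized abelian varieties, and in particular the conjecture.

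First I would settle (a). By \cite{MT} the non-degeneracy of~\eqref{eq:pairings1} for $Y$ follows from the Lefschetz standard conjecture for $Y$. Since $Y$ is a smooth $5$-dimensional linear section of $\mathrm{Gr}(2,6)$ (resp. $\mathrm{Gr}(2,7)$, resp. $\mathrm{Pf}(4,7)$), the weak Lefschetz theorem identifies $H^k_{dR}(Y)$ with the cohomology of the ambient variety in degrees $k<5$. As the cohomology of Grassmannians is spanned by Schubert classes and concentrated in even degrees, the unique nonzero odd group is the middle one $H^5_{dR}(Y)$; by homological projective duality~\cite{kuznet-grass} it is generated by the curve $X$, so that $NH^5_{dR}(Y)=H^5_{dR}(Y)$. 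The standard conjecture for $Y$ therefore reduces to the middle degree, where it is implied by the Hodge--Riemann bilinear relations; these also give the sought non-degeneracy of the induced pairing~\eqref{eq:pairings1}.

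Next I would attack (b), i.e. the three items of Definition~\ref{def:veryreprenstable-intro} for $Y$ (here $d_Y=5$, $n=2$). Item (i): as $Y$ is Fano one has $\mathrm{Pic}^0(Y)=0$ and $\mathrm{Alb}(Y)=0$, while the cohomological analysis above shows $H^{2i+1}_{dR}(Y)=0$, hence $J_a^i(Y)\subseteq J^i(Y)=0$, for $i\neq 2$; combined with a Bloch--Srinivas argument exploiting the rationality of $Y$, this yields $A_\bbZ^{i+1}(Y)=0$ for $i\neq 2$. Item (iii): the Abel--Jacobi map $A_\bbZ^{3}(Y)\to J^2_a(Y)$ should be a rational isomorphism because, via~\eqref{eq:NH}, the relevant part of the Chow group of $Y$ is controlled by $h^1$ of the curve $X$. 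The genuinely hard item is (ii): one must produce an algebraic representative for $A_\bbZ^{3}(Y)$ carrying the incidence polarization.

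The hard part is precisely item (ii) of verepresentability, and it is what keeps the statement conjectural. The natural strategy is to prove that the relevant summand of the Chow motive of $Y$ is, up to a Tate twist, the curve motive $h^1(X)$ --- exactly what~\eqref{eq:NH} and the equivalence $\cT_Y\simeq\cD^b(X)$ predict --- which would identify the algebraic representative of $A_\bbZ^{3}(Y)$ with $\mathrm{Jac}(X)$ and the incidence polarization with the principal polarization. Carrying this out demands a Murre-type theory of algebraic representatives together with a representability statement for these specific Fano fivefolds, in the spirit of Vial~\cite{Vial,vial-fibrations}, which is not presently available in the literature; I expect this to be the main obstacle. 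A weaker but essentially unconditional statement is, however, within reach: using the additivity of ${\bf J}(-)$ along semi-orthogonal decompositions together with ${\bf J}(\cT_Y^{\perp,\dg})=0$, and its invariance under the equivalence $\cT_Y\simeq\cD^b(X)$, one gets ${\bf J}(\perf^\dg(Y))\simeq{\bf J}(\cT_Y^\dg)\simeq{\bf J}(\cT_X^\dg)\simeq J(X)$; once~\eqref{eq:pairings1} is non-degenerate for $Y$ this is identified with $\prod_i J^i_a(Y)$, which by the cohomological vanishing above reduces to $J^2_a(Y)$. This already proves $J(Y)\simeq J(X)$ as abelian varieties up to isogeny, and upgrading the isogeny to a polarization-preserving isomorphism is exactly what the verepresentability input feeds into Theorem~\ref{thm:main2}.
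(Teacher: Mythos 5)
This statement is a \emph{conjecture} in the paper: the paper offers no proof of it. It is stated precisely because, as the paper records just before formulating it, in Example (E8) all hypotheses of Theorem~\ref{thm:main2} are satisfied \emph{except} the verepresentability of $Y$ and the non-degeneracy of the pairings \eqref{eq:pairings1} for $Y$. Your proposal reproduces exactly this reduction to Theorem~\ref{thm:main2}, and to your credit it isolates the same two missing inputs (your (a) and (b)) and candidly flags item (ii) of Definition~\ref{def:veryreprenstable-intro} --- an algebraic representative of $A^3_\bbZ(Y)$ carrying an incidence polarization --- as the obstruction that keeps the statement conjectural. In that sense your write-up agrees with the paper's own assessment; but it is not a proof, and it could not be one without supplying precisely the ingredients you defer.

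Moreover, the two steps you present as settled are not. For (a): the Lefschetz standard conjecture asserts the \emph{algebraicity} of the inverse Lefschetz operator; it is not implied by the Hodge--Riemann bilinear relations, so your reduction of ``the standard conjecture for $Y$'' to middle degree is a non sequitur. What you actually need is weaker --- non-degeneracy of \eqref{eq:pairings1} --- and this would indeed follow from $NH^5_{dR}(Y)=H^5_{dR}(Y)$ by Poincar\'e duality in middle degree. But that equality is a cycle-theoretic statement: by \eqref{eq:NH} it requires an algebraic correspondence $\gamma\colon M_\bbQ(C)\to M_\bbQ(Y)(2)$ from a curve inducing a surjection onto $H^5_{dR}(Y)$, and this does not follow formally from the semi-orthogonal decomposition of \cite{kuznet-grass}: extracting such cohomological control from the categorical data is exactly what Theorems~\ref{thm:main1}--\ref{thm:main2} do, and their hypotheses already include the non-degeneracy you are trying to prove, so the argument as written is circular (or needs an independent input, such as additivity of topological K-theory along semi-orthogonal decompositions, which is nowhere in the paper). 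For (b), item (i): rational connectedness of the Fano $Y$ plus Bloch--Srinivas does handle $A^2_\bbZ(Y)$ and algebraically trivial $0$-cycles, but the vanishing of $A^4_\bbZ(Y)$ (algebraically trivial $1$-cycles on a fivefold) does not follow from the cohomological vanishing $H^7_{dR}(Y)=0$ without a representability theorem for $1$-cycles, which you do not supply; your item (iii) is likewise asserted, not proved. So the proposal is a reasonable road map --- essentially the one the paper itself sketches --- but every load-bearing step beyond the formal reduction to Theorem~\ref{thm:main2} is either missing or flawed, which is precisely why the statement remains a conjecture.
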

Assuming this conjecture, Proposition \ref{prop:double-torelli} gives rise to the following result:
\begin{corollary}\label{cor:tor-pf-gr}
Let $X$ and $Y$ be as in Example (E8). Assuming Conjecture \ref{conj:grass-pfaff}, the generalized and the categorical
Torelli theorem are equivalent.
\end{corollary}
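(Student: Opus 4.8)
The plan is to reduce both Torelli theorems to a single statement about the curves attached to the Grassmannian/Pfaffian sections. For $Y$ as in Example (E8) let $X$ denote its associated curve, so that the categorical data of that example supplies an equivalence $\cT_Y \simeq \cD^b(X)$; likewise attach a curve $X'$ to a second section $Y'$, with $\cT_{Y'}\simeq \cD^b(X')$. By Conjecture~\ref{conj:grass-pfaff} the sections $Y$ and $Y'$ are verepresentable and one has isomorphisms $J(Y)\simeq J(X)$ and $J(Y')\simeq J(X')$ of principally polarized abelian varieties, the polarization on $J(X)=\mathrm{Jac}(X)$ being the canonical theta polarization. I would then establish, for any two such sections, the chain of equivalences
\begin{align*}
\cT_Y \simeq \cT_{Y'} &\,\Longleftrightarrow\, \cD^b(X)\simeq \cD^b(X') \,\Longleftrightarrow\, X \simeq X' \\
&\,\Longleftrightarrow\, J(X)\simeq J(X') \,\Longleftrightarrow\, J(Y)\simeq J(Y')\,,
\end{align*}
where the last link is as principally polarized abelian varieties. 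The first equivalence is immediate from $\cT_Y \simeq \cD^b(X)$ and $\cT_{Y'}\simeq \cD^b(X')$. The second is the derived Torelli theorem for curves: a triangulated equivalence $\cD^b(X)\simeq \cD^b(X')$ forces $X\simeq X'$, via Bondal--Orlov reconstruction \cite{bondal-orlov} when the genus is $\neq 1$ and, in the elliptic case, because an elliptic curve admits no nontrivial Fourier--Mukai partner (see Huybrechts \cite{huybrechts}). The third is the classical Torelli theorem \cite{torelli} in its polarized form, and the fourth is precisely Conjecture~\ref{conj:grass-pfaff}.

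Granting the chain, the corollary is formal. The generalized Torelli theorem for these sections reads ``$Y\simeq Y'$ if and only if $J(Y)\simeq J(Y')$ as principally polarized abelian varieties'', while the categorical Torelli theorem reads ``$Y\simeq Y'$ if and only if $\cT_Y\simeq \cT_{Y'}$''; here the Fourier--Mukai clause appearing in Proposition~\ref{prop:double-torelli} and Corollary~\ref{cor:Torelli} is vacuous, since Conjecture~\ref{conj:Kuznetsov1} is known to hold in Example (E8). Because the chain identifies the two right-hand conditions, the two theorems assert literally the same thing about isomorphism of the sections, and therefore hold or fail simultaneously.

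The point requiring care is the compatibility of \emph{polarizations} along the chain, since classical Torelli for curves is true only in its polarized form. One must check that the principal polarization transported from $J(Y)$ to $\mathrm{Jac}(X)$ is exactly the theta polarization, so that the polarized classical Torelli theorem may legitimately be invoked; this compatibility is built into the statement of Conjecture~\ref{conj:grass-pfaff}, but it is precisely what the argument relies on, and an isomorphism of unpolarized Jacobians would not suffice. The only other delicate instance is the elliptic-curve case of derived Torelli, handled separately as above. Finally, I stress that the conclusion is an equivalence of \emph{statements}: it proves neither Torelli theorem for the Grassmannian/Pfaffian sections, but shows that, under Conjecture~\ref{conj:grass-pfaff}, the two stand or fall together.
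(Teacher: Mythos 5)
Your argument is correct, and since the paper gives no explicit proof of this corollary --- it is stated as a formal consequence of Proposition \ref{prop:double-torelli} together with Conjecture \ref{conj:grass-pfaff} --- your chain through the associated curves is the natural way to fill in what is left implicit. It is worth noting that your route is actually more self-contained than the one the paper's framing suggests: Proposition \ref{prop:double-torelli} and Corollary \ref{cor:Torelli} run through Theorem \ref{thm:main2}, hence through the Jacobian functor and the non-degeneracy of the pairings \eqref{eq:pairings1} for $Y$, which Conjecture \ref{conj:grass-pfaff} as stated does not supply; by instead passing both directions through $\cD^b(X)\simeq \cD^b(X')$, derived Torelli for curves, polarized classical Torelli, and the conjectural isomorphisms $J(Y)\simeq J(X)$ of principally polarized abelian varieties, you bypass the noncommutative-motive machinery and that hypothesis altogether. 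One justification, however, is imprecise: the Fourier--Mukai clause in the categorical Torelli statement is not ``vacuous because Conjecture \ref{conj:Kuznetsov1} is known in (E8)''. What is known in (E8) is that Kuznetsov's HPD functor $\perf(X)\to \perf(Y)$ is of Fourier--Mukai type; the clause in question concerns the functor $\perf(Y)\to\perf(Y')$ induced by an \emph{arbitrary} equivalence $\phi:\cT_Y\simeq\cT_{Y'}$, to which this fact does not directly apply. The fix is the same composition argument the paper uses in illustration (i) of Corollary \ref{cor:Torelli}: conjugating $\phi$ by the HPD equivalences yields an equivalence $\cD^b(X)\simeq\cD^b(X')$, which is of Fourier--Mukai type by Orlov \cite{orlov-represent}; the projection $\perf(Y)\to\cT_Y$ is of Fourier--Mukai type by Kuznetsov's base-change theorem \cite{kuznetbasechange}, and the embedding $\cD^b(X')\simeq\cT_{Y'}\hookrightarrow\perf(Y')$ is of Fourier--Mukai type by \cite{kuznet-grass}; hence so is the composite. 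With this adjustment your proof is complete.
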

\section{Application II: Kuznetsov's conjecture on Fano threefolds}
Let $X_{d'}$ and $Y_d$ be two Fano threefolds of Picard number one, indexes $1$ and $2$, and degrees $d'$ and $d$, respectively. As proved by Kuznetsov \cite[Corollary 3.5 and Lemma~3.6]{kuznetfanothreefolds}, whenever $d'\equiv 2$ modulo $4$, one has the following semi-orthogonal decompositions
\begin{eqnarray*}
\perf(X_{d'})=\langle \cT_{X_{d'}}, \langle \cE_{X_{d'}}, \cO_{X_{d'}} \rangle \rangle && \perf(Y_d)=\langle \cT_{Y_d} ,\langle \cO_{Y_d},\cO_{Y_d}(1) \rangle \rangle\,,
\end{eqnarray*}
where $\cE_{X_{d'}}$ is an exceptional vector bundle of rank $2$.
\begin{conjecture}{(Kuznetsov \cite[Conjecture~3.7]{kuznetfanothreefolds})}\label{conj:Kuznetsov2}
Let $\cM\cF^i_d$ be the moduli space of Fano threefolds with Picard number one, index $i$ and degree $d$. Under these notations, there exists a correspondence $Z_d \subset \cM\cF^2_d \times \cM\cF^1_{4d+2}$
which is dominant over each factor. Moreover, at each point $(Y_d , X_{4d+2} )$ of this correspondence $Z_d$ one has an equivalence $\cT_{X_{4d+2}} \simeq \cT_{Y_d}$ of triangulated categories.
\end{conjecture}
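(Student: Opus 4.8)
The plan is to exploit a structural simplification: the index-two Fano threefolds of Picard number one are precisely the del Pezzo threefolds of degree $d\in\{1,2,3,4,5\}$, so that Conjecture~\ref{conj:Kuznetsov2} amounts to five separate statements. Since the case $d=3$ (cubic threefolds versus $X_{14}$) is already due to Kuznetsov \cite{kuznet-v14}, the task is, for each remaining $d$, to produce both the correspondence $Z_d$ and the fibrewise equivalence. First I would construct $Z_d$ Hodge-theoretically. Each del Pezzo threefold $Y_d$ and each prime Fano threefold $X_{4d+2}$ carries a single nontrivial principally polarized intermediate Jacobian, and I would set
\[
Z_d := \cM\cF^2_d \times_{\mathcal{A}} \cM\cF^1_{4d+2} = \{(Y_d,X_{4d+2}) : J(Y_d)\simeq J(X_{4d+2}) \text{ as p.p.a.v.}\},
\]
the fibre product of the two period maps $p_1 : \cM\cF^2_d \to \mathcal{A}$ and $p_2 : \cM\cF^1_{4d+2} \to \mathcal{A}$ over the moduli stack of principally polarized abelian varieties.

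Next I would prove that $Z_d$ is dominant over each factor. By construction this is equivalent to the equality $\mathrm{Im}(p_1)=\mathrm{Im}(p_2)$ of period images inside the relevant $\mathcal{A}_g$, which I would check against the classical Prym and Abel--Jacobi descriptions of these Jacobians: for $d=2$ one matches the ten-dimensional period of a quartic double solid with that of the genus-six threefold $X_{10}$, while for $d=4,5$ the Jacobians have dimension two and zero and the matching is essentially forced. Granting this equality of images, the projections $Z_d \to \cM\cF^2_d$ and $Z_d \to \cM\cF^1_{4d+2}$ are both dominant, which settles the first assertion of the conjecture.

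Finally, at a generic pair $(Y_d,X_{4d+2})\in Z_d$ I would upgrade the isomorphism $J(Y_d)\simeq J(X_{4d+2})$ to a triangulated equivalence $\cT_{X_{4d+2}}\simeq \cT_{Y_d}$. The template is the $d=3$ argument: realize the matched pair as dual linear sections of a common homogeneous ambient and invoke homological projective duality to exhibit a Fourier--Mukai kernel inducing the equivalence of nontrivial components; failing a ready-made ambient, one would instead try to reconstruct $\cT_{X_{4d+2}}$ as a category of sheaves on a moduli space of objects in $\cT_{Y_d}$.

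The hard part is twofold and, in fact, reveals the fragility of the conjecture. The first obstacle is the period-image equality $\mathrm{Im}(p_1)=\mathrm{Im}(p_2)$: a crude count shows it can already fail at the level of dimensions, since the two intermediate Jacobians need not even have the same dimension --- for $d=1$ one has $h^{1,2}(Y_1)=21$ but $h^{1,2}(X_6)=20$, so $p_1$ and $p_2$ land in different spaces $\mathcal{A}_{21}$ and $\mathcal{A}_{20}$ and no dominant $Z_1$ with $J(Y_1)\simeq J(X_6)$ can exist. The second obstacle is the uniform construction of the fibrewise Fourier--Mukai equivalence, available at present only for $d=3$. By Theorem~\ref{thm:main2}, read through Conjecture~\ref{conj:Kuznetsov1}, any genuine equivalence $\cT_{X_{4d+2}}\simeq \cT_{Y_d}$ would force an isomorphism $J(X_{4d+2})\simeq J(Y_d)$ of principally polarized abelian varieties; the $d=1$ discrepancy therefore shows that this strategy --- and indeed the conjecture as stated --- must break down there.
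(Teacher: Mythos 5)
You should first be clear about what the paper does with this statement: it is \emph{Kuznetsov's conjecture}, quoted from \cite{kuznetfanothreefolds}, and the paper contains no proof of it. On the contrary, the paper's entire contribution here (Theorem \ref{thm:conjectures}) is a conditional \emph{refutation}: assuming Conjecture \ref{conj:Kuznetsov1}, the case $d=2$ of Conjecture \ref{conj:Kuznetsov2} is false. So your proposal could not have matched a proof in the paper, and in fact it does not end as a proof either --- your last paragraph concedes that the strategy, and the conjecture itself, must break down. To the extent that this is your real conclusion, it agrees in spirit with the paper, but the two refutations are genuinely different. You kill the case $d=1$ by a pointwise dimension count: $h^{1,2}(Y_1)=21$ while $h^{1,2}(X_6)=20$, so the two intermediate Jacobians cannot even be isogenous. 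The paper instead kills $d=2$, where the Jacobians have the \emph{same} dimension ($10$), so no pointwise count can work; it uses a count of moduli: by Debarre--Iliev--Manivel \cite{debarre-iliev-manivel} the Jacobians $J(X_{10})$ sweep out a $20$-dimensional family as $X_{10}$ varies in $\cM\cF^1_{10}$, whereas $\cM\cF^2_2$ is only $19$-dimensional, so the correspondence $Z_2$ can never be dominant over $\cM\cF^1_{10}$. This is why the paper's accompanying remark proposes deleting the dominance hypothesis, whereas your $d=1$ argument (when correctly run, see below) rules out even a single equivalence.

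Two concrete gaps in your write-up. First, your positive step at $d=2$ --- ``one matches the ten-dimensional period of a quartic double solid with that of the genus-six threefold $X_{10}$'' --- is precisely the assertion the paper disproves: the period images satisfy $\dim \mathrm{Im}(p_1)\leq 19 < 20 = \dim \mathrm{Im}(p_2)$, so they are never equal, your fibre product $Z_2$ is not dominant over $\cM\cF^1_{10}$, and the Hodge-theoretic construction of $Z_d$ collapses exactly in the case the paper analyzes. Second, your $d=1$ refutation appeals to Theorem \ref{thm:main2}, but that theorem requires both schemes to be verepresentable, and the paper states explicitly (end of Example \ref{ex:very2}) that verepresentability of Fano threefolds of index $2$ and degree $1$ is not known. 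This gap is fixable because a dimension count needs no polarizations: use Theorem \ref{thm:main1} instead. The pairings \eqref{eq:pairings1} are non-degenerate for Fano (hence uniruled) threefolds, both orthogonal complements $\langle \cO_{Y_1},\cO_{Y_1}(1)\rangle$ and $\langle \cE_{X_6},\cO_{X_6}\rangle$ are generated by exceptional objects, so items (i)--(iii) of Theorem \ref{thm:main1}, together with Conjecture \ref{conj:Kuznetsov1}, yield an isomorphism $J(Y_1)\simeq J(X_6)$ in $\Ab(\bbC)_\bbQ$; this forces equal dimensions, contradicting $21\neq 20$. With that repair, your argument is a correct, and in fact simpler, refutation of the case $d=1$, complementary to the paper's refutation of $d=2$.
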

Kuznetsov proved this conjecture when $d=3,4,5$. Making use of Theorem~\ref{thm:main2}, we show that the case $d=2$ is {\em not} compatible with Conjecture~\ref{conj:Kuznetsov1}.
\begin{theorem}\label{thm:conjectures}
Whenever Conjecture \ref{conj:Kuznetsov1} holds, the case $d=2$ of Conjecture \ref{conj:Kuznetsov2} is false.
\end{theorem}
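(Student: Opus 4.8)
The plan is to argue by contradiction: assume Conjecture~\ref{conj:Kuznetsov1} together with the case $d=2$ of Conjecture~\ref{conj:Kuznetsov2}, and derive an incompatibility of dimensions between two period maps. For $d=2$ the relevant threefolds are $Y_2 \in \cM\cF^2_2$, a del Pezzo threefold of degree $2$ (\ie a quartic double solid, exactly as in item (iii) of \S\ref{sec:app1}), and $X_{10} \in \cM\cF^1_{10}$, a prime Fano threefold of genus $6$. Conjecture~\ref{conj:Kuznetsov2} supplies a correspondence $Z_2 \subset \cM\cF^2_2 \times \cM\cF^1_{10}$, dominant over each factor, such that $\cT_{X_{10}} \simeq \cT_{Y_2}$ at every point $(Y_2, X_{10}) \in Z_2$.

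First I would check, at a generic point of $Z_2$, all the hypotheses of Theorem~\ref{thm:main2}. Both threefolds are Fano, so their canonical bundles are antiample and the pairings~\eqref{eq:pairings1} are non-degenerate (uniruled threefolds). The quartic double solid $Y_2$ is verepresentable and satisfies ${\bf J}(\cT_{Y_2}^{\perp,\dg})=0$, since $\cT_{Y_2}^\perp = \langle \cO_{Y_2}, \cO_{Y_2}(1)\rangle$ is generated by exceptional objects; the genus $6$ prime Fano threefold $X_{10}$ is likewise verepresentable, its intermediate Jacobian being the principally polarized algebraic representative of $A^2_\bbZ(X_{10})$. Finally, Conjecture~\ref{conj:Kuznetsov1} upgrades the triangulated equivalence $\cT_{X_{10}}\simeq \cT_{Y_2}$ to a Fourier--Mukai functor $\Phi$, so Theorem~\ref{thm:main2} applies and yields an isomorphism $J(X_{10}) \simeq J(Y_2)$ of principally polarized abelian varieties at every point of $Z_2$.

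Next I would compare period maps. Both intermediate Jacobians are ten-dimensional, so we obtain morphisms $\mathcal{P}_X \colon \cM\cF^1_{10} \to \mathcal{A}_{10}$ and $\mathcal{P}_Y \colon \cM\cF^2_2 \to \mathcal{A}_{10}$ sending a threefold to its polarized intermediate Jacobian. The previous paragraph gives $\mathcal{P}_X \circ \pi_X = \mathcal{P}_Y \circ \pi_Y$ on $Z_2$, where $\pi_X, \pi_Y$ are the two projections. Since $Z_2$ dominates both factors, $\pi_X(Z_2)$ and $\pi_Y(Z_2)$ are dense, whence $\overline{\Im(\mathcal{P}_X)} = \overline{\Im(\mathcal{P}_Y)}$. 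Now Torelli for quartic double solids (Voisin, Debarre) says that $\mathcal{P}_Y$ is generically injective, so $\dim \overline{\Im(\mathcal{P}_Y)} = \dim \cM\cF^2_2 = 19$. On the other hand, the analysis of the period map for prime Fano threefolds of degree $10$ by Debarre--Iliev--Manivel shows that $\mathcal{P}_X$ has $2$-dimensional generic fibers, so $\dim \overline{\Im(\mathcal{P}_X)} = \dim \cM\cF^1_{10} - 2 = 22-2 = 20$. The equality $19 = 20$ is absurd, and this contradiction proves the theorem.

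The main obstacle is precisely this input on the two period maps: one needs the injectivity of $\mathcal{P}_Y$ and, crucially, that $\mathcal{P}_X$ drops dimension by exactly $2$, that is, that $X_{10}$ fails Torelli in a controlled, two-dimensional way. The first is classical; the second is the deep content borrowed from Debarre--Iliev--Manivel, and it is this qualitative failure of Torelli for $X_{10}$ — impossible to reconcile with the injectivity of $\mathcal{P}_Y$ once the Jacobians are forced to agree along a doubly dominant correspondence — that powers the argument. A secondary point requiring care is the verepresentability of $X_{10}$, needed to make $J(X_{10})$ a well-defined principally polarized abelian variety to which Theorem~\ref{thm:main2} applies.
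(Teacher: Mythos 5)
Your proposal is correct and follows essentially the same route as the paper: apply Theorem \ref{thm:main2} at the points of $Z_2$ to force $J(X_{10})\simeq J(Y_2)$ as principally polarized abelian varieties, and then contradict the Debarre--Iliev--Manivel dimension count (a $20$-dimensional family of intermediate Jacobians $J(X_{10})$ versus the $19$-dimensional moduli space $\cM\cF^2_2$). The only cosmetic difference is that you invoke Torelli for quartic double solids to get $\dim\overline{\Im(\mathcal{P}_Y)}=19$ exactly, whereas the paper only needs the trivial bound $\dim\overline{\Im(\mathcal{P}_Y)}\leq\dim\cM\cF^2_2=19$, and correspondingly only needs dominance of $Z_2$ over the factor $\cM\cF^1_{10}$ rather than over both factors.
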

\begin{remark}
As the proof of Theorem~\ref{thm:conjectures} suggests (see \S\ref{sec:proofs}), the assumption in Conjecture \ref{conj:Kuznetsov2} that the correspondence $Z_d$ is dominant over each factor should be removed.
\end{remark}
\section{Application III: Blow-ups}
Making use of Theorem~\ref{thm:main2}, one obtains a categorical proof of the following result concerning blow-ups.
The original proof, due to Clemens-Griffiths \cite[\S 3]{clemensgriffiths}, is rather different since it is based on (co)homological methods.

\begin{theorem}\label{thm:blow-ups}
Let $X$ be a verepresentable
$\bbC$-scheme of dimension $3$ with ample, antiample, or trivial canonical bundle,
$\widetilde{X_{\mathrm{pt}}} \to X$ the blow-up of $X$ at a point, and $\widetilde{X_C} \to X$ the blow-up of $X$
along a smooth curve $C$. Under these assumptions, $J(\widetilde{X_{\mathrm{pt}}})\simeq J(X)$ and $J(\widetilde{X_C}) \simeq J(X) \times J(C)$ as principally polarized abelian varieties.
\end{theorem}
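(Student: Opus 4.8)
The plan is to derive both isomorphisms from Orlov's blow-up formula \cite{orlovprojbund} together with Theorem~\ref{thm:main2}. For the blow-up $\pi\colon\widetilde{X_Z}\to X$ of a threefold along a smooth center $Z$ of codimension $c$ one has a semi-orthogonal decomposition $\perf(\widetilde{X_Z})=\langle\perf(X),\perf(Z),\ldots,\perf(Z)\rangle$ with $c-1$ copies of $\perf(Z)$, in which the pullback $L\pi^\ast$ and the twisted pushforwards from $Z$ are all of Fourier--Mukai type; hence Conjecture~\ref{conj:Kuznetsov1} holds automatically in this situation. For the point blow-up ($Z=\mathrm{pt}$, $c=3$) the two extra pieces are copies of $\perf(\Spec \bbC)$, each generated by a single exceptional object $\cE_i$, whereas for the curve blow-up ($Z=C$, $c=2$) the single extra piece is $\perf(C)$.

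For $\widetilde{X_{\mathrm{pt}}}$ I would take $\cT:=\perf(X)$, regarded both as the whole of $\perf(X)$ (so that $\cT_X^\perp=0$) and as the distinguished summand of $\perf(\widetilde{X_{\mathrm{pt}}})$ via Orlov's embedding, the equivalence $\phi$ being the identity. Since $\cT_Y^\perp=\langle\cE_1,\cE_2\rangle$ is generated by exceptional objects we have ${\bf J}(\cT_Y^{\perp,\dg})=0$, and trivially ${\bf J}(\cT_X^{\perp,\dg})=0$. Once the remaining hypotheses of Theorem~\ref{thm:main2} are verified---namely that $\widetilde{X_{\mathrm{pt}}}$ is again verepresentable and that the pairings \eqref{eq:pairings1} stay non-degenerate---the theorem yields at once that $\tau\colon J(X)\to J(\widetilde{X_{\mathrm{pt}}})$ is an isomorphism of principally polarized abelian varieties. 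Observe that the canonical bundle hypothesis is only required of $X$, which is harmless here since the blow-up plays the role of $Y$.

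For $\widetilde{X_C}$ I would apply Theorem~\ref{thm:main2} twice. With $\cT=\perf(X)$ as above one obtains a split injective, polarization-preserving morphism $\tau_X\colon J(X)\hookrightarrow J(\widetilde{X_C})$; here $\cT_Y^\perp=\perf(C)$, so ${\bf J}(\cT_Y^{\perp,\dg})=J(C)\neq 0$ and $\tau_X$ is not surjective. Taking instead the curve $C$ itself in the role of $X$ and $\cT=\perf(C)\hookrightarrow\perf(\widetilde{X_C})$---$C$ being verepresentable with its principal polarization and having ample, trivial, or antiample canonical bundle---one obtains a second split injective, polarization-preserving morphism $\tau_C\colon J(C)\hookrightarrow J(\widetilde{X_C})$. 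On the other hand, a semi-orthogonal decomposition splits the associated noncommutative Chow motives as a direct sum, so additivity of the Jacobian functor ${\bf J}$ gives ${\bf J}(\perf^\dg(\widetilde{X_C}))\simeq{\bf J}(\perf^\dg(X))\times{\bf J}(\perf^\dg(C))$, that is $J(\widetilde{X_C})\simeq J(X)\times J(C)$ up to isogeny. It then remains to check that this splitting is orthogonal for the polarization; combined with the fact that $\tau_X$ and $\tau_C$ restrict to the given principal polarizations on the two factors, this forces the polarization of $J(\widetilde{X_C})$ to be the product one, giving the asserted isomorphism of principally polarized abelian varieties.

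The hard part will be to establish that the blow-ups are themselves verepresentable and, in the curve case, that the product decomposition is orthogonal for the polarization. For verepresentability I would invoke the blow-up formula for Chow groups to see that $A^{i+1}_\bbZ$ of the blow-up still vanishes for $i\neq 1$ and that its algebraic representative in codimension two is $J^1_a(X)$ (respectively $J^1_a(X)\times J(C)$), the incidence polarization being inherited from $X$ (respectively from $X$ and $C$). The orthogonality of the two factors reflects the decomposition $NH^3_{dR}(\widetilde{X_C})=NH^3_{dR}(X)\oplus NH^1_{dR}(C)(-1)$ into orthogonal pieces for the intersection pairing, a standard consequence of the blow-up formula; equivalently it follows from the additivity of the defining pairing along the semi-orthogonal decomposition. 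This is precisely the point at which Theorem~\ref{thm:main2} takes over the role played by the explicit cohomological bookkeeping in the original argument of Clemens--Griffiths \cite{clemensgriffiths}.
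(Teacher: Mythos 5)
Your proposal is correct and follows essentially the same route as the paper: Orlov's blow-up decomposition, Theorem~\ref{thm:main2} applied once for $\widetilde{X_{\mathrm{pt}}}$ and twice (with $\cT=\perf(X)$ and then $\cT=\perf(C)$) for $\widetilde{X_C}$, and additivity of ${\bf J}(-)$ giving the isogeny $J(\widetilde{X_C})\simeq J(X)\times J(C)$. The only divergences occur at the two points you flag as hard: the paper simply cites Clemens--Griffiths \cite{clemensgriffiths} for the verepresentability of the blow-ups rather than re-deriving it from the Chow-group blow-up formula (note that your re-derivation, which already identifies the algebraic representative with $J(X)$ resp.\ $J(X)\times J(C)$ carrying the inherited incidence polarization, would subsume the theorem and make the categorical machinery redundant), and in place of your $NH_{dR}^{3}$-orthogonality check the paper concludes the curve case by observing that the cokernel of the induced injection $\alpha\colon J(X)\times J(C)\to J(\widetilde{X_C})$ of principally polarized abelian varieties is a principally polarized abelian variety isogenous to zero, hence zero.
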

\begin{remark}
The isomorphism $J(\widetilde{X_{\mathrm{pt}}})\simeq J(X)$ holds more generally in every dimension as long as one assumes
that $\widetilde{X_{\mathrm{pt}}}$ is also verepresentable and that the pairings \eqref{eq:pairings1} are non-degenerate for $X$.
\end{remark}

\section{Application IV: Quadric fibrations and intersection of quadrics}\label{sec:app-4}

Let $S$ be a $\bbC$-scheme and $Q \to S$ a flat quadric fibration of relative
dimension $n$. Out of this data,
we can construct the sheaf $\cC_0$ on $S$ of the even parts of Clifford algebras and the derived category $\perf(S,\cC_0)$
of perfect complexes of $\cC_0$-algebras; consult Kuznetsov \cite[\S 3]{kuznetquadrics} for details. As proved by Kuznetsov \cite[Thm.~4.2]{kuznetquadrics},
we have the following semi-orthogonal decomposition
\begin{equation}\label{eq:mot-decomp-1}
\perf(Q) = \langle \perf(S,\cC_0),  \langle \underbrace{\perf(S),\ldots, \perf(S)}_{n\text{-}\mathrm{factors}}\rangle\rangle\,.
\end{equation}
Now, let $T$ be a $\bbC$-scheme, $X\to T$ a generic relative complete intersection of $r+1$ quadric hypersurfaces of dimension $n$,
and $Q \to S$ the associated linear span; consult \cite{auel-bernar-bologn}. As proved in {\em loc. cit.}, the following holds:
\begin{itemize}
\item[(i)] When $2r<n$, the fibers of $X$ are of Fano type and, thanks to homological projective duality, we have the following semi-orthogonal decomposition
\begin{equation}\label{hpd_Fano} 
\perf(X) = \langle \perf(S,\cC_0),  \langle \underbrace{\perf(T),\ldots, \perf(T)}_{(n-2r)\text{-}\mathrm{factors}}\rangle\rangle\,.
\end{equation}
\item[(ii)] When $2r=n$, the fibers of $X$ are generically of Calabi-Yau type and, thanks once again to homological projective duality, we have $\perf(X)\simeq \perf(S,\cC_0)$.
\item[(iii)] When $2r>n$, the fibers of $X$ are generically of general type and there exists a fully faithful functor
$\perf(X) \hookrightarrow \perf(S,\cC_0)$.
\end{itemize}
In what follows, we will treat separately the cases (i)-(ii) from (iii). In the cases (i)-(ii), the orthogonal complement of $\perf(S,\cC_0)$ is well understood.
Moreover, all the known examples where $X$ and $Q$ are verepresentable are of this type; see \S\ref{sect-verepre-quadric}.
In the case (iii), the orthogonal complement of $\perf(S,\cC_0)$ in $\perf(X)$ is less well-understood, and we will only briefly treat in \S\ref{sect-quadric-gentype}.
\subsection{Reduction by hyperbolic splitting}
Let $S$ be a $\bbC$-scheme and $Q\to S$ a flat quadric fibration
of relative dimension $n \geq 2$.
Whenever such fibration admits a {\em regular section}, \ie a section cutting
a regular point at each fiber, one can perform its {\em reduction by hyperbolic splitting} (along this regular section); consult
\cite[\S1]{auel-bernar-bologn} for details. Roughly speaking, one takes the base of the cone given (fiberwise) by the intersection of the quadric with the tangent
space at the section. We obtain in this way another quadric fibration $Q'\to S$
of relative dimension $n-2$, and another sheaf $\cC_0'$ on $S$ of the even parts
of Clifford algebras. As proved in \cite[Thm.~1.26]{auel-bernar-bologn}, one has an equivalence of categories $\perf(S,\cC_0)
\simeq \perf(S,\cC_0')$ (given by a Fourier-Mukai functor) whenever $Q\to S$ is a generic quadric fibration and $Q$ is smooth.
By combining this result with Theorem~\ref{thm:main1} one then obtains:
\begin{theorem}\label{thm:morita-hyperbolic}
Let $Q\to S$ and $Q'\to S$ be as above. Assume that the bilinear pairings \eqref{eq:pairings1} (associated to $Q$ and $Q'$)
are non-degenerate and that ${\bf J}(\perf^\dg(S))=0$. Under these assumptions, there exist inverse isogenies $\tau: \prod_{i=0}^{d_Q-1}J^i_a(Q) \to \prod_{i=0}^{d_{Q'}-1}J^i_a(Q')$ and $\sigma:\prod_{i=0}^{d_{Q'}-1} J^i_a(Q') \to \prod_{i=0}^{d_Q-1}J^i_a(Q)$.
\end{theorem}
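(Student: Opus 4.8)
The plan is to derive the statement as a direct application of Theorem~\ref{thm:main1}(iii) to the pair $(Q,Q')$. As categorical data I would take the semi-orthogonal decompositions \eqref{eq:mot-decomp-1} for $Q$ and for $Q'$, together with the equivalence produced by hyperbolic reduction. Concretely, set $\cT_X:=\perf(S,\cC_0)$ (embedded in $\perf(Q)$ as the first piece of \eqref{eq:mot-decomp-1}), set $\cT_Y:=\perf(S,\cC_0')$ (embedded in $\perf(Q')$ likewise), and let $\phi:\perf(S,\cC_0)\stackrel{\simeq}{\to}\perf(S,\cC_0')$ be the equivalence of \cite[Thm.~1.26]{auel-bernar-bologn}. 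The orthogonal complements are then $\cT_X^\perp=\langle\perf(S),\ldots,\perf(S)\rangle$ ($n$ copies) and $\cT_Y^\perp=\langle\perf(S),\ldots,\perf(S)\rangle$ ($n-2$ copies), both semi-orthogonal decompositions into copies of $\perf(S)$.

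Next I would check the three hypotheses of Theorem~\ref{thm:main1}(iii). Non-degeneracy of the pairings \eqref{eq:pairings1} for $Q$ and $Q'$ is assumed outright. For Conjecture~\ref{conj:Kuznetsov1}, observe that the composed functor $\Phi:\perf(Q)\to\perf(S,\cC_0)\stackrel{\phi}{\simeq}\perf(S,\cC_0')\hookrightarrow\perf(Q')$ is a composition of three functors, each of Fourier-Mukai type: the projection by \cite[Thm.~7.1]{kuznetbasechange}, the equivalence $\phi$ by \cite[Thm.~1.26]{auel-bernar-bologn} (which exhibits it as a Fourier-Mukai functor, under the genericity of $Q\to S$ and smoothness of $Q$), and the inclusion by \cite{kuznetquadrics}. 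Since a composition of Fourier-Mukai functors is again of Fourier-Mukai type (convolve the kernels), $\Phi$ is of Fourier-Mukai type, so Conjecture~\ref{conj:Kuznetsov1} holds in this instance.

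It then remains to verify the vanishing of the Jacobians of the two orthogonal complements. Because the Jacobian functor ${\bf J}(-)$ is additive with respect to semi-orthogonal decompositions of noncommutative Chow motives, and every piece of $\cT_X^\perp$ and of $\cT_Y^\perp$ is a copy of $\perf^\dg(S)$, the hypothesis ${\bf J}(\perf^\dg(S))=0$ forces ${\bf J}(\cT_X^{\perp,\dg})=0$ and ${\bf J}(\cT_Y^{\perp,\dg})=0$. All hypotheses of Theorem~\ref{thm:main1}(iii) are thus in force, so the resulting morphism $\tau:\prod_{i=0}^{d_Q-1}J^i_a(Q)\to\prod_{i=0}^{d_{Q'}-1}J^i_a(Q')$ is an isomorphism in $\Ab(\bbC)_\bbQ$. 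Unwinding the definition of $\Ab(\bbC)_\bbQ$ as abelian varieties up to isogeny, an isomorphism there is precisely an isogeny admitting an isogeny inverse $\sigma$ in the opposite direction, which is exactly the pair of inverse isogenies claimed.

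I expect the main obstacle to be the verification of Conjecture~\ref{conj:Kuznetsov1} for $\Phi$, i.e.\ the Fourier-Mukai property. Although each of the three constituent functors is individually known to be Fourier-Mukai, one must ensure that the genericity and smoothness hypotheses of \cite[Thm.~1.26]{auel-bernar-bologn} are genuinely in force so that $\phi$ is realized by a Fourier-Mukai kernel, and one must record that composition preserves the Fourier-Mukai property. The remaining points---additivity of ${\bf J}(-)$ over the two complements and the translation of an isomorphism in $\Ab(\bbC)_\bbQ$ into a pair of inverse isogenies---are formal once this is settled.
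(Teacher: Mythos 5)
Your proposal is correct and follows essentially the same route as the paper: both verify the hypotheses of Theorem~\ref{thm:main1} using Kuznetsov's semi-orthogonal decompositions \eqref{eq:mot-decomp-1} for $Q$ and $Q'$, the Fourier--Mukai property of the composed functor $\Phi:\perf(Q)\to\perf(S,\cC_0)\simeq\perf(S,\cC_0')\hookrightarrow\perf(Q')$ (via \cite[Thm.~7.1]{kuznetbasechange}, \cite[Thm.~1.26]{auel-bernar-bologn}, and \cite{kuznetquadrics}), and the hypothesis ${\bf J}(\perf^\dg(S))=0$ to kill the Jacobians of both orthogonal complements. Your write-up is in fact more detailed than the paper's (which compresses the Fourier--Mukai verification and the additivity of ${\bf J}(-)$ into citations), and your closing remark about genericity and smoothness of $Q\to S$ is precisely what the phrase ``as above'' in the statement is meant to guarantee.
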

\begin{remark}\label{rmk-non-deg-pair1} The bilinear pairings \eqref{eq:pairings1} 
are known to be non-degenerate whenever ${\mathrm{dim}}(S) \leq 2$; see Vial \cite[Thm.~7.4]{vial-fibrations}.
When $\mathrm{dim}(S)\leq 2$ and $S$ is a projective space of a rational surface, the triangulated category $\perf(S)$
is known to have a full exceptional collection; see \cite[\S 4, 4.1]{bolognesi_bernardara:representability}. Consequently, we have ${\bf J}(\perf^\dg(S))=0$.
\end{remark}

Note that whenever $Q \to S$ is a quadric fibration, the relative canonical
bundle $\omega_{Q/S}$ is antiample. Hence, the canonical bundle $\omega_Q$ can never be trivial or ample.
By combining Theorems \ref{thm:main2} and \ref{thm:morita-hyperbolic}, one then obtains the following result (whose
applications will be discussed in \S\ref{sect-verepre-quadric}):
\begin{corollary}\label{cor-polar-hyper}
Let $Q \to S$ and $Q' \to S$ be as in Theorem \ref{thm:morita-hyperbolic} with ${\bf J}(\perf^\dg(S))=0$.
Suppose moreover that $Q$ and $Q'$ are
verepresentable and that the canonical bundle of $Q$ is antiample.
Under these assumptions, $J(Q) \simeq J(Q')$ as principally polarized abelian varieties.
\end{corollary}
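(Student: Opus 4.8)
The plan is to realize $Q$ and $Q'$ as a concrete instance of the categorical data of Example (E6) and then invoke Theorem~\ref{thm:main2} directly. The substance of the argument is bookkeeping: checking that each hypothesis of that theorem is in force under the standing assumptions of the corollary and of Theorem~\ref{thm:morita-hyperbolic}.

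First I would fix the relevant semi-orthogonal decompositions. By Kuznetsov's decomposition \eqref{eq:mot-decomp-1}, one has $\perf(Q)=\langle \perf(S,\cC_0), \langle \perf(S),\ldots,\perf(S)\rangle\rangle$ with $n$ copies of $\perf(S)$, and likewise $\perf(Q')=\langle \perf(S,\cC_0'),\langle \perf(S),\ldots,\perf(S)\rangle\rangle$ with $n-2$ copies. I set $\cT_Q:=\perf(S,\cC_0)$ and $\cT_{Q'}:=\perf(S,\cC_0')$, so that $\cT_Q^\perp$ (resp. $\cT_{Q'}^\perp$) is assembled from $n$ (resp. $n-2$) copies of $\perf(S)$. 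The equivalence $\phi:\perf(S,\cC_0)\stackrel{\simeq}{\to}\perf(S,\cC_0')$ produced by the reduction by hyperbolic splitting \cite[Thm.~1.26]{auel-bernar-bologn} then supplies exactly the categorical data required to apply our main theorems.

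Next I would verify the hypotheses of items (i)--(ii) of Theorem~\ref{thm:main1}. The non-degeneracy of the pairings \eqref{eq:pairings1} for $Q$ and $Q'$ is inherited from the assumptions of Theorem~\ref{thm:morita-hyperbolic}. For Conjecture~\ref{conj:Kuznetsov1}, I would observe that the composed functor $\Phi$ factors as the projection $\perf(Q)\to\perf(S,\cC_0)$, followed by $\phi$, followed by the inclusion $\perf(S,\cC_0')\hookrightarrow\perf(Q')$; each of the three is of Fourier--Mukai type (the projection and inclusion by Kuznetsov's base-change results, the middle equivalence by \cite[Thm.~1.26]{auel-bernar-bologn}), so $\Phi$ is as well --- this is precisely why Conjecture~\ref{conj:Kuznetsov1} is known to hold in Example (E6). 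Finally, since ${\bf J}$ is additive along semi-orthogonal decompositions (see \cite{MT}) and $\cT_Q^\perp$ is built from copies of $\perf(S)$, the assumption ${\bf J}(\perf^\dg(S))=0$ forces ${\bf J}(\cT_Q^{\perp,\dg})=0$; the identical argument applied to the $n-2$ copies of $\perf(S)$ in $\cT_{Q'}^\perp$ gives ${\bf J}(\cT_{Q'}^{\perp,\dg})=0$.

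With these verifications in hand, together with the verepresentability of $Q$ and $Q'$ and the antiampleness of $\omega_Q$ (both assumed), every hypothesis of Theorem~\ref{thm:main2} is met. Verepresentability collapses the product $\prod_i J^i_a(Q)$ to the single principally polarized Jacobian $J(Q)=J^n_a(Q)$, and likewise for $Q'$. Theorem~\ref{thm:main2} then yields a split injective $\tau:J(Q)\to J(Q')$ preserving the principal polarization, which becomes an isomorphism once ${\bf J}(\cT_{Q'}^{\perp,\dg})=0$ --- already established above. I expect the only genuinely non-formal point to be the Fourier--Mukai property of the full composite $\Phi$: the intermediate $\phi$ is Fourier--Mukai by construction, but one must confirm that the outer projection and inclusion are as well so that the composite inherits this property; everything else is a direct transcription of the cited results, making the corollary a clean synthesis of Theorems~\ref{thm:main2} and~\ref{thm:morita-hyperbolic}.
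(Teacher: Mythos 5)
Your proposal is correct and follows essentially the same route as the paper: the paper obtains Corollary~\ref{cor-polar-hyper} precisely by combining Theorem~\ref{thm:main2} with Theorem~\ref{thm:morita-hyperbolic}, whose proof consists of the same verifications you carry out --- the Fourier--Mukai property of $\Phi: \perf(Q) \to \perf(S,\cC_0) \simeq \perf(S,\cC_0') \hookrightarrow \perf(Q')$ via Kuznetsov's base-change results and \cite[Thm.~1.26]{auel-bernar-bologn}, and the vanishing ${\bf J}(\cT_Q^{\perp,\dg})={\bf J}(\cT_{Q'}^{\perp,\dg})=0$ deduced from ${\bf J}(\perf^\dg(S))=0$ via the semi-orthogonal decompositions of $\perf(Q)$ and $\perf(Q')$ into copies of $\perf(S)$. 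Your identification of the Fourier--Mukai property of the composite as the only non-formal point matches the paper's treatment exactly.
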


\subsection{Complete intersections of Fano and Calabi--Yau type}
Let $T$ be a $\bbC$-scheme, $X \to T$ a
generic relative complete intersection of $r+1$ quadric hypersurfaces of dimension $n$,
and $Q \to S$ the associated linear span; see \cite{auel-bernar-bologn}. Suppose that $2r \leq n$. This implies that the fibers of $X$ are either of Fano ($2r < n$) or Calabi--Yau ($2r=n$) type.
By combining (relative) homological projective duality results with Theorem~\ref{thm:main1}, one then obtains:
\begin{theorem}\label{thm:hpd-quadr}
Let $X \to T$ be as above (with $2r\leq n$) and $Q\to S$ the associated linear span. Assume that the bilinear pairings
\eqref{eq:pairings1} (associated to $X$ and $Q$) are non-degenerate, and that ${\bf J}(\perf^\dg(T))=0$. 
Under these assumptions, there exist inverse isogenies $\tau:\prod_{i=0}^{d_X-1}J^i_a(X) \to \prod_{i=0}^{d_Q-1} J^i_a(Q)$ and $\sigma: \prod_{i=0}^{d_Q-1}J^i_a(Q) \to \prod_{i=0}^{d_X-1}J^i_a(X)$. 
\end{theorem}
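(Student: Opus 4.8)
The plan is to apply Theorem~\ref{thm:main1} to the common admissible subcategory $\cT:=\perf(S,\cC_0)$. The semi-orthogonal decomposition \eqref{eq:mot-decomp-1} exhibits $\perf(S,\cC_0)$ as admissible in $\perf(Q)$, while on the $X$-side the decomposition \eqref{hpd_Fano} does the same in the Fano range $2r<n$, and in the Calabi--Yau range $2r=n$ one has outright $\perf(X)\simeq\perf(S,\cC_0)$ (the degenerate case $n-2r=0$ of \eqref{hpd_Fano}). Taking $X$ (the complete intersection) and $Q$ (the quadric fibration) as the two schemes of Theorem~\ref{thm:main1}, with $\phi$ the identity equivalence of $\perf(S,\cC_0)$, we reduce to checking its three hypotheses: non-degeneracy of the pairings \eqref{eq:pairings1} (assumed), validity of Conjecture~\ref{conj:Kuznetsov1} for the induced functor $\Phi$, and the vanishing of the Jacobians of the two orthogonal complements.

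For Conjecture~\ref{conj:Kuznetsov1}, note that $\Phi$ factors as the projection $\perf(X)\to\perf(S,\cC_0)$, the identity, and the inclusion $\perf(S,\cC_0)\hookrightarrow\perf(Q)$. Each of these is of Fourier--Mukai type: the inclusion and the projection are the integral functors furnished by relative homological projective duality for quadric fibrations \cite{auel-bernar-bologn,kuznetquadrics}, an integral-kernel description of the semi-orthogonal projection onto the Clifford component being provided by Kuznetsov's base-change theorem \cite{kuznetbasechange}. Since a composite of Fourier--Mukai functors is again of Fourier--Mukai type, so is $\Phi$.

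It remains to verify the two Jacobian vanishings. On the $X$-side, the orthogonal complement of $\cT$ is zero when $2r=n$, and is generated by $(n-2r)$ copies of $\perf(T)$ when $2r<n$; since a semi-orthogonal decomposition becomes a direct sum in $\NChow(\bbC)_\bbQ$, the additivity of ${\bf J}(-)$ reduces ${\bf J}(\cT_X^{\perp,\dg})=0$ to the assumed ${\bf J}(\perf^\dg(T))=0$. On the $Q$-side, \eqref{eq:mot-decomp-1} shows the orthogonal complement of $\cT$ is generated by $n$ copies of $\perf(S)$, so it suffices to prove ${\bf J}(\perf^\dg(S))=0$. Here the key geometric input is that the linear-span construction of \cite{auel-bernar-bologn} realizes $S$ as a $\bbP^r$-bundle over $T$; Orlov's projective-bundle formula then decomposes $\perf(S)$ into $r+1$ copies of $\perf(T)$, and a second application of additivity reduces ${\bf J}(\perf^\dg(S))=0$ once more to ${\bf J}(\perf^\dg(T))=0$.

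With all three hypotheses established, Theorem~\ref{thm:main1}(iii) shows that $\tau:\prod_{i=0}^{d_X-1}J^i_a(X)\to\prod_{i=0}^{d_Q-1}J^i_a(Q)$ is an isomorphism in $\Ab(\bbC)_\bbQ$, and $\sigma:=\tau^{-1}$ supplies the reverse isogeny. The main obstacle I anticipate is the reduction ${\bf J}(\perf^\dg(S))=0$: one must extract from \cite{auel-bernar-bologn} the precise projective-bundle structure of $S$ over $T$ and confirm that Orlov's decomposition applies in the requisite relative generality. By contrast, the Fourier--Mukai verification, while technical, amounts to assembling several of Kuznetsov's integral-kernel constructions, and the $X$-side vanishing is immediate from additivity.
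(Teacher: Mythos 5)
Your proposal is correct and follows essentially the same route as the paper: both apply Theorem~\ref{thm:main1} with $\cT=\perf(S,\cC_0)$ as the common admissible subcategory, verify the Fourier--Mukai hypothesis via relative homological projective duality, and reduce both Jacobian vanishings to ${\bf J}(\perf^\dg(T))=0$ using additivity of ${\bf J}(-)$ on semi-orthogonal decompositions together with Orlov's projective-bundle decomposition of $\perf(S)$ for the $\bbP^r$-bundle $S\to T$. The step you flagged as the main obstacle is exactly the one the paper handles the same way (and records in Remark~\ref{rmk-non-deg-pair2}), so no gap remains.
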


\begin{remark}\label{rmk-non-deg-pair2} The bilinear pairings \eqref{eq:pairings1} are known to be non-degenerate
when $r=0$ and ${\mathrm{dim}}(S) \leq 2$ (see Vial \cite[Thm.~7.4]{vial-fibrations}),
when $r=1$, ${\mathrm{dim}}(S) \leq 2$ and $d_X \leq 6$ (see Vial \cite[Thm.~7.6]{vial-fibrations}), 
and when $r$ is arbitrary and $S$ is a point. Note that since $S\to T$ is a projective bundle,
${\bf J}(\perf^\dg(T))=0 \Rightarrow {\bf J}(\perf^\dg(S))=0$.
\end{remark}

\subsection{Verepresentability of quadrics, intersections of quadrics and their fibrations}\label{sect-verepre-quadric}
Let $X \to T$ be a fibration in complete intersection of $r+1$ quadrics of relative dimension $n$.
In the following three cases $X$ is verepresentable and its canonical bundle is either ample, antiample, or trivial; see Examples \ref{ex:very2}-\ref{ex:very3} also.
\begin{itemize}
\item[(i)] ($T=\mathrm{point}$) In this case $X$ is the intersection of $r+1$ quadric hypersurfaces in $\bbP^{n+1}$.
When $r=0$, $X$ is just a quadric. Hence, it is known to be
verepresentable for all $n$. When $r=1$, $X$ is verepresentable for
$n$ even (see see \cite[Thm. 1.5]{marcello-goncalo-chowgroups}, and Reid \cite{reid:thesis}) and its canonical bundle is either trivial ($n=2$) or
antiample ($n>2$). When $r=2$, $X$ is verepresentable for $n>3$ odd 
(see \cite[Thm. 1.5]{marcello-goncalo-chowgroups}, and Beauville \cite[\S 6]{beauvilleprym}) and its canonical bundle is antiample. When $r=n \geq 3$, $X$ is a curve of genus $g>1$ in $\bbP^{n+1}$. Hence, it is verepresentable and its canonical bundle is ample.
\item[(ii)]($T=\bbP^1$)
In all these cases, the canonical bundle of $X$ is antiample.
When $r=0$, $X \to \bbP^1$ is a quadric fibration. If $n$ is odd, then $X$ is verepresentable; see Vial \cite[\S 4]{vial-fibrations}. Thanks to the work of Kuznetsov \cite[Corollary 3.16]{kuznetquadrics} and Polishchuk \cite{polishchuk}, $\perf(\bbP^1,\cC_0)$ is generated by exceptional objects.
If $n$ is even, then $X$ is also verepresentable. This follows from the combination of Vial's motivic description \cite[\S 4]{vial-fibrations} with Reid's work \cite{reid:thesis} on the intermediate Jacobian.
When $r=1$ and $n=3$, $X \to \bbP^1$ is a fibration in del Pezzo surfaces of degree 4. Thanks to the work of Gorchinskiy-Guletskii~\cite{gorch-gul-motives-and-repr} and Kanev~\cite{kanevdp1}, $X$ is also verepresentable.
\item[(iii)]($T=\mathrm{rational\,\,surface}$)
In all these cases, the canonical bundle of $X$ is antiample.
When $r=0$ and $n=1$, $X$ is a conic bundle over $T$. Hence, it is verepresentable; see Beauville and others~\cite{beauvilleprym,beltrachow,bolognesi_bernardara:conic_bundles}. When $r=0$, $n$ is odd and $T=\bbP^2$, $X$
is also verepresentable; see Beauville \cite{beauvilleprym}. The same proof should also work for every rational surface.
\end{itemize}
Now, let $Q\to S$ be the linear span associated to the above fibration $X \to T$. Assume that $r\neq 0$ (otherwise $X\simeq Q$). In the following cases $Q$ is verepresentable:
\begin{itemize}
\item[(i')] ($T=\mathrm{point}$, $r=1$, $n$ even). In this case $X$ is the intersection of two even dimensional
quadrics and $Q \to \bbP^1$ is an even dimensional fibration. Hence, from the above item (ii) one concludes that $Q$ is verepresentable.

\item[(ii')] ($T=\mathrm{point}$, $r=2$, $n$ odd). In this case $X$ is the intersection of three odd dimensional
quadrics and $Q \to \bbP^2$ is an odd dimensional fibration. Hence, from the above item (iii) one concludes that $Q$ is verepresentable.

\item[(iii')] ($T = \bbP^1$, $r=1$, $n=3$). In this case $X \to \bbP^1$ is a del Pezzo fibration of degree 4 and $Q \to S$ is a quadric fibration over an Hirzebruch surface. Hence, from the above (conditional) item (iii) one concludes that $Q$ is verepresentable.
\end{itemize}
By combining Theorems \ref{thm:main2} and \ref{thm:hpd-quadr}, one then obtains the following result:
\begin{corollary}\label{cor-polar-inter}
Let $X \to T$ and $Q \to S$ be as in Theorem \ref{thm:hpd-quadr} with ${\bf J}(\perf^{\dg}(T))=0$. Assume moreover that $X$ and $Q$ are verepresentable
and that the canonical bundle of $X$ is antiample or trivial. This holds for instance in the above cases (i')-(iii').
Under these assumptions, $J(X) \simeq J(Q)$ as principally polarized abelian varieties.
\end{corollary}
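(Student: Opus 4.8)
The plan is to reduce the statement to a single application of Theorem~\ref{thm:main2}, putting the relative complete intersection $X$ in the role of ``$X$'' and the linear span $Q$ in the role of ``$Y$''. The common admissible subcategory is $\cT:=\perf(S,\cC_0)$: by the decomposition \eqref{eq:mot-decomp-1} it is admissible in $\perf(Q)$, and by \eqref{hpd_Fano} (when $2r<n$) or by the equivalence $\perf(X)\simeq\perf(S,\cC_0)$ (when $2r=n$) it is admissible in $\perf(X)$; the bridging equivalence $\phi$ is the canonical identification of these two copies of $\perf(S,\cC_0)$. Note that the hypotheses of Theorem~\ref{thm:hpd-quadr} are already assumed here (non-degeneracy of the pairings \eqref{eq:pairings1} for $X$ and $Q$, and ${\bf J}(\perf^\dg(T))=0$), so the genuinely new content is the compatibility of $\tau$ with the principal polarizations, which is exactly what Theorem~\ref{thm:main2} supplies once its remaining hypotheses are verified. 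The role assignment is forced by the canonical-bundle condition: Theorem~\ref{thm:main2} requires the canonical bundle of the first scheme to be ample, antiample, or trivial, and it is $X$ (not $Q$) whose canonical bundle is antiample or trivial by assumption.

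Next I would verify the two Jacobian-vanishing conditions using additivity of ${\bf J}(-)$ along semi-orthogonal decompositions. The complement $\cT_X^\perp$ is generated by $(n-2r)$ copies of $\perf(T)$ (and is zero in the Calabi--Yau case $2r=n$), so ${\bf J}(\cT_X^{\perp,\dg})\simeq\prod{\bf J}(\perf^\dg(T))=0$ by the hypothesis ${\bf J}(\perf^\dg(T))=0$. The complement $\cT_Q^\perp$ is generated by $n$ copies of $\perf(S)$; since $S\to T$ is a projective bundle, $\perf^\dg(S)$ decomposes into copies of $\perf^\dg(T)$ and hence ${\bf J}(\perf^\dg(S))=0$ (as noted in Remark~\ref{rmk-non-deg-pair2}), whence ${\bf J}(\cT_Q^{\perp,\dg})\simeq\prod{\bf J}(\perf^\dg(S))=0$. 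This secures both ${\bf J}(\cT_X^{\perp,\dg})=0$ and the condition ${\bf J}(\cT_Y^{\perp,\dg})=0$ of Theorem~\ref{thm:main2} with $Y=Q$, so $\tau$ will be not merely split injective but an isomorphism.

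The step I expect to be the main obstacle is Conjecture~\ref{conj:Kuznetsov1}, i.e. checking that the composed functor $\Phi\colon\perf(X)\to\perf(Q)$ is of Fourier--Mukai type. Here $\Phi$ factors as the projection $\perf(X)\to\perf(S,\cC_0)$ followed by the inclusion $\perf(S,\cC_0)\hookrightarrow\perf(Q)$, with the canonical equivalence in between. The point is that the (relative) homological projective duality producing \eqref{hpd_Fano} and Kuznetsov's description of the quadric-fibration decomposition \eqref{eq:mot-decomp-1} are realized by kernels, so both the projection and the inclusion are kernel-induced and $\Phi$ is Fourier--Mukai; tracking these kernels explicitly through the HPD package is the delicate part, but it is available from the cited works of Kuznetsov and of Auel--Bernardara--Bolognesi.

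Finally I would assemble the pieces. All hypotheses of Theorem~\ref{thm:main2} then hold: the categorical data together with non-degeneracy and Conjecture~\ref{conj:Kuznetsov1} (items (i)--(ii) of Theorem~\ref{thm:main1}), verepresentability of both $X$ and $Q$, and the canonical bundle of $X$ being antiample or trivial. Because $X$ and $Q$ are verepresentable, the products $\prod_{i=0}^{d_X-1}J^i_a(X)$ and $\prod_{i=0}^{d_Q-1}J^i_a(Q)$ collapse to the single Jacobians $J(X)$ and $J(Q)$, and Theorem~\ref{thm:main2}, in view of ${\bf J}(\cT_Q^{\perp,\dg})=0$, yields that $\tau\colon J(X)\to J(Q)$ is an isomorphism of principally polarized abelian varieties, as claimed.
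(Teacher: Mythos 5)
Your proposal is correct and follows essentially the same route as the paper: the corollary is obtained by feeding the categorical data established in the proof of Theorem~\ref{thm:hpd-quadr} (the common admissible subcategory $\perf(S,\cC_0)$, Fourier--Mukai type via relative homological projective duality, and the vanishing ${\bf J}(\perf^\dg(S))=0$ deduced from ${\bf J}(\perf^\dg(T))=0$ through the projective-bundle decomposition of $\perf(S)$) into Theorem~\ref{thm:main2}, with $X$ in the first slot because its canonical bundle is antiample or trivial. Your verification of both complement vanishings ${\bf J}(\cT_X^{\perp,\dg})=0$ and ${\bf J}(\cT_Q^{\perp,\dg})=0$, yielding that $\tau$ is an isomorphism of principally polarized abelian varieties, matches the paper's intended argument exactly.
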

Suppose now that $Q \to S$ admits a regular section. Let us write $Q' \to S$ for its reduction by hyperbolic splitting.
If $Q$ belongs to one of the above cases (i')-(iii'), then so it does $Q'$. Moreover, when $S=\bbP^1$, $Q \to S$ admits a section
thanks to Tsen's Theorem; see Koll{\`a}r \cite[Thm.~IV.6.5]{kollar-book}. Furthermore, when $S$ is a rational surface,
$Q \to S$ also admits a section in the particular case of relative dimension $n>2$; see Beauville \cite[\S IV]{beauvilleprym}.
\begin{corollary}\label{cor-alltogether}
Let $X \to T$ and $Q \to S$ be as in Theorem \ref{thm:hpd-quadr} with ${\bf J}(\perf^{\dg}(T))=0$. 
Assume that $Q\to S$ admits a regular section, and let $Q' \to S$
be its reduction by hyperbolic splitting. Assume moreover that $X$ and $Q$ are verepresentable
and that the canonical bundle of $X$ is antiample or trivial. This holds for instance in the above cases (i')-(iii').
Under these assumptions, $J(X) \simeq J(Q')$ as principally polarized abelian varieties.
\end{corollary}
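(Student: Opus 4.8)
The plan is to chain together the two principal isomorphism results that have already been established. Corollary~\ref{cor-polar-inter} produces an isomorphism of principally polarized abelian varieties $J(X) \simeq J(Q)$ under exactly the hypotheses assumed here, so the only new ingredient needed is an isomorphism $J(Q) \simeq J(Q')$ relating the linear span $Q$ to its reduction $Q'$ by hyperbolic splitting. The natural source for this second isomorphism is Corollary~\ref{cor-polar-hyper}, which gives precisely $J(Q) \simeq J(Q')$ as principally polarized abelian varieties. Thus the proof is essentially a composition argument, and the main work is in verifying that the hypotheses of Corollary~\ref{cor-polar-hyper} are all met in the present situation.

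First I would observe that the hypothesis ${\bf J}(\perf^\dg(T))=0$ implies ${\bf J}(\perf^\dg(S))=0$: as recorded in Remark~\ref{rmk-non-deg-pair2}, the morphism $S\to T$ is a projective bundle, and the Jacobian functor is insensitive to such projective-bundle extensions, so the vanishing descends to $S$. This is needed because Corollary~\ref{cor-polar-hyper} requires ${\bf J}(\perf^\dg(S))=0$ rather than the condition on $T$. Next I would check that $Q'$ is verepresentable: since $Q$ belongs to one of the cases (i')--(iii') (where verepresentability of $Q$ was established), the reduction $Q'$ by hyperbolic splitting stays within the same family, as noted in the paragraph preceding the statement, so $Q'$ inherits verepresentability. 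One must also confirm that the canonical bundle of $Q$ is antiample — but this is automatic, since $Q\to S$ is a quadric fibration and its relative canonical bundle $\omega_{Q/S}$ is antiample, so $\omega_Q$ is never trivial or ample. Finally, the non-degeneracy of the pairings~\eqref{eq:pairings1} for $Q'$ follows from the non-degeneracy for $Q$ together with the relation $\mathrm{dim}(S)\leq 2$ in the relevant cases, via Vial's results invoked in Remark~\ref{rmk-non-deg-pair1}.

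With these verifications in place, I would then apply Corollary~\ref{cor-polar-hyper} to obtain $J(Q)\simeq J(Q')$ as principally polarized abelian varieties, apply Corollary~\ref{cor-polar-inter} to obtain $J(X)\simeq J(Q)$ as principally polarized abelian varieties, and compose the two isomorphisms to conclude $J(X)\simeq J(Q')$. Since each of the two isomorphisms respects the principal polarizations, so does their composite, giving the claimed isomorphism of \emph{polarized} abelian varieties.

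The step I expect to be the main obstacle — or at least the one requiring the most care — is confirming that $Q'$ remains verepresentable and that the non-degeneracy hypotheses genuinely transfer from $Q$ to $Q'$. The verepresentability of $Q'$ hinges on the claim that hyperbolic reduction preserves membership in the families (i')--(iii'); this is asserted in the text but rests on the fact that hyperbolic splitting lowers the relative dimension by exactly $2$ while keeping the base $S$ and the Fano/Calabi--Yau type of the fibration intact, so that the verepresentability criteria of \S\ref{sect-verepre-quadric} continue to apply. One should double-check that the regular-section hypothesis is compatible with these parity and dimension constraints in each of the three cases, and that the existence of the section (guaranteed by Tsen's theorem when $S=\bbP^1$, or by Beauville's argument for rational surfaces in relative dimension $n>2$) does not inadvertently push $Q'$ outside the verepresentable range.
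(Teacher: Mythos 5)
Your overall strategy --- composing Corollary~\ref{cor-polar-inter} ($J(X)\simeq J(Q)$) with Corollary~\ref{cor-polar-hyper} ($J(Q)\simeq J(Q')$) --- is a reasonable way to assemble the statement, and two of your verifications are exactly right: ${\bf J}(\perf^\dg(S))=0$ does descend from ${\bf J}(\perf^\dg(T))=0$ because $S\to T$ is a projective bundle, and the verepresentability of $Q'$ is indeed only available through the observation that hyperbolic reduction keeps $Q'$ inside the families (i')--(iii'). (A minor imprecision: the non-degeneracy of the pairings \eqref{eq:pairings1} for $Q'$ does not ``follow from the non-degeneracy for $Q$''; it is an independent application of Vial's theorem, valid because $\mathrm{dim}(S)\leq 2$ in those cases.) However, there is one genuine error: your claim that antiampleness of $\omega_Q$ is \emph{automatic}. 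What the paper records before Corollary~\ref{cor-polar-hyper} is that $\omega_{Q/S}$ is antiample, hence $\omega_Q$ \emph{can never be trivial or ample}; this is strictly weaker than $\omega_Q$ being antiample (in general the canonical bundle of a quadric fibration is neither ample, antiample, nor trivial --- think of a fibration over a base of general type). Antiampleness of $\omega_Q$ appears as a separate hypothesis in Corollary~\ref{cor-polar-hyper} precisely because it is not automatic: it is what lets Theorem~\ref{thm:main2} be applied with \emph{source} $Q$. Since Corollary~\ref{cor-alltogether} does not assume it, your route must invoke the cases (i')--(iii') (quadric fibrations over $\bbP^1$, $\bbP^2$, or a Hirzebruch surface, where the paper asserts antiampleness) at this step as well; your stated justification does not close this gap.

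The mismatch of hypotheses is also a hint that the intended argument runs differently, in a way that avoids $\omega_Q$ altogether: apply Theorem~\ref{thm:main2} directly to the pair $(X,Q')$, taking $\cT_X:=\perf(S,\cC_0)\subset\perf(X)$ (from the decomposition \eqref{hpd_Fano}), $\cT_{Q'}:=\perf(S,\cC_0')\subset\perf(Q')$, and the equivalence $\perf(S,\cC_0)\simeq\perf(S,\cC_0')$ given by hyperbolic reduction. The composed functor $\perf(X)\to\perf(S,\cC_0)\simeq\perf(S,\cC_0')\hookrightarrow\perf(Q')$ is of Fourier--Mukai type (each factor is, as in the proofs of Theorems~\ref{thm:morita-hyperbolic} and~\ref{thm:hpd-quadr}), the orthogonal complements are generated by copies of $\perf(T)$ and $\perf(S)$ respectively, so both have vanishing Jacobian, and --- crucially --- the ample/antiample/trivial condition in Theorem~\ref{thm:main2} is imposed only on the source $X$, which is assumed. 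This explains why Corollary~\ref{cor-alltogether} makes no assumption on $\omega_Q$, and it eliminates your problematic step entirely; the only conditions that still require restricting to the cases (i')--(iii') are the verepresentability of $Q'$ and the non-degeneracy of its pairings, which you correctly identified as the delicate points.
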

\begin{example}
In the case $T=\bbP^1$, consider $X \to \bbP^1$ as a del Pezzo fibration of degree 4, \ie  $r=1$ and $n=3$. Let $Y \to S$ be
the conic bundle (birational to $X$) constructed by Alexeev~\cite{alekseev:dP4}. From Corollary~\ref{cor-alltogether}
we obtain then the isomorphism $J(X) \simeq J(Q')$ of principally polarized abelian varieties described by Alexeev \cite{alekseev:dP4}.
\end{example}
\subsection{Complete intersections of general type}\label{sect-quadric-gentype}
Let $T$ be a $\bbC$-scheme, $X \to T$ a
generic relative complete intersection of $r+1$ quadric hypersurfaces of dimension $n$,
and $Q \to S$ the associated linear span; consult \cite{auel-bernar-bologn} for details. Assume that $2r>n$.
In this case the fibers of $X$ are of general type. Recall from above that we have a fully faithful functor $\perf(X) \to \perf(S,\cC_{0,X})$ of Fourier-Mukai type. Thanks to Kuznetsov and others (see \cite{auel-bernar-bologn,kuznetquadrics}) the orthogonal complement can be described in terms of higher Clifford modules. By combining (relative) homological projective duality results with Theorem~\ref{thm:main1}, one then obtains:
\begin{theorem}\label{thm:hpd-gentype}
Let $X \to T$ be as above (with $2r > n$) and $Q\to S$ the associated linear span. Assume that the bilinear pairings
\eqref{eq:pairings1} (associated to $X$ and $Q$) are non-degenerate. 
Under these assumptions, there exist well-defined isogenies $\tau:\prod_{i=0}^{d_X-1}J^i_a(X) \to \prod_{i=0}^{d_Q-1} J^i_a(Q)$ and $\sigma: \prod_{i=0}^{d_Q-1}J^i_a(Q) \to \prod_{i=0}^{d_X-1}J^i_a(X)$, where $\tau$ is injective
as a morphism in $\Ab(\bbC)_\bbQ$ and $\sigma$ is its retraction.
\end{theorem}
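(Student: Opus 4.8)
The plan is to mirror the proof strategy already used for Theorem~\ref{thm:morita-hyperbolic} and Theorem~\ref{thm:hpd-quadr}, but now in the general-type regime $2r>n$, where we only have a \emph{fully faithful} functor $\perf(X)\hookrightarrow\perf(S,\cC_{0,X})$ rather than an equivalence. Concretely, I would take $\cT_X:=\perf(X)$ viewed as an admissible subcategory of $\perf(S,\cC_{0,X})$, and set $\cT_Q:=\perf(S,\cC_{0,X})$ sitting inside $\perf(Q)$ via the Kuznetsov semi-orthogonal decomposition~\eqref{eq:mot-decomp-1}. The common triangulated category $\cT$ is then $\perf(X)$, realized as an admissible subcategory of both $\perf(X)$ (trivially, as the whole thing) and of $\perf(Q)$ (through the chain $\perf(X)\hookrightarrow\perf(S,\cC_0)\hookrightarrow\perf(Q)$). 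This exhibits the categorical data of the introduction with the roles of $X$ and $Y$ played by $X$ and $Q$, and with $\cT_X^\perp=0$.

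First I would check that the composite functor $\Phi:\perf(X)\to\perf(Q)$ is of Fourier--Mukai type, so that Conjecture~\ref{conj:Kuznetsov1} holds in this situation: the inclusion $\perf(X)\hookrightarrow\perf(S,\cC_{0,X})$ is of Fourier--Mukai type by the homological projective duality results of \cite{auel-bernar-bologn,kuznetquadrics}, and the inclusion $\perf(S,\cC_0)\hookrightarrow\perf(Q)$ coming from~\eqref{eq:mot-decomp-1} is of Fourier--Mukai type by Kuznetsov~\cite{kuznetquadrics}; a composition of Fourier--Mukai functors is again Fourier--Mukai. With the non-degeneracy of the pairings~\eqref{eq:pairings1} assumed in the statement, part~(i) of Theorem~\ref{thm:main1} then produces the morphism $\tau:\prod_{i}J^i_a(X)\to\prod_i J^i_a(Q)$ in $\Ab(\bbC)_\bbQ$. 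Because $\cT_X^\perp=0$ we have ${\bf J}(\cT_X^{\perp,\dg})={\bf J}(0)=0$, so part~(ii) of Theorem~\ref{thm:main1} upgrades $\tau$ to a \emph{split injective} morphism; the retraction $\sigma$ is exactly the splitting provided there, and this is precisely the asserted structure (with $\tau$ injective and $\sigma$ its retraction).

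The main point requiring care, and the step I expect to be the genuine obstacle, is that in the general-type case we do \emph{not} have ${\bf J}(\cT_Q^{\perp,\dg})=0$, so part~(iii) does not apply and we cannot conclude that $\tau$ is an isomorphism --- this is why the statement asserts only split injectivity of $\tau$ rather than inverse isogenies, in contrast with Theorems~\ref{thm:morita-hyperbolic} and~\ref{thm:hpd-quadr}. Consequently I would \emph{not} try to prove an isomorphism; instead the work is entirely in correctly identifying $\cT_X^\perp=0$ inside the double embedding and in verifying the Fourier--Mukai property of the composite, which reduces to the cited results. One subtlety to track is that $\perf(S,\cC_{0,X})$ is a twisted (Clifford) category rather than the derived category of a scheme, so one must use that the theory of \cite{MT} and the non-degeneracy hypotheses are being applied to $\perf^\dg(X)$ and $\perf^\dg(Q)$ (genuine schemes) and not directly to the Clifford category; the Clifford category enters only as the intermediate admissible subcategory $\cT$, whose own Jacobian never needs to be computed. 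Modulo this bookkeeping, the result follows formally from Theorem~\ref{thm:main1}(i)--(ii).
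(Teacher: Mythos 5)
Your proof is correct and structurally the same as the paper's: both take $\cT_X:=\perf(X)$ (so that $\cT_X^\perp=0$ and hence ${\bf J}(\cT_X^{\perp,\dg})=0$), verify that the composite $\Phi:\perf(X)\to\perf(Q)$ is of Fourier--Mukai type, and then invoke items (i)--(ii) of Theorem~\ref{thm:main1}, correctly stopping short of item (iii) since nothing guarantees that the Jacobian of the orthogonal complement in $\perf(Q)$ (built from higher Clifford modules) vanishes in the general-type regime. The one step where you genuinely diverge is the Fourier--Mukai verification. You compose the two embeddings $\perf(X)\hookrightarrow\perf(S,\cC_0)$ and $\perf(S,\cC_0)\hookrightarrow\perf(Q)$, each known to be of Fourier--Mukai type by homological projective duality and Kuznetsov, and appeal to closure of Fourier--Mukai functors under composition; making this precise requires the kernel formalism for the twisted category $\perf(S,\cC_0)$ (kernels in suitable categories of $\cC_0$-modules on products, composed by convolution), which is in fact the mechanism the paper uses in its proofs of Theorems~\ref{thm:morita-hyperbolic} and~\ref{thm:hpd-quadr}. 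The paper's own proof of Theorem~\ref{thm:hpd-gentype} takes a shortcut that avoids this bookkeeping entirely: since the composite $\perf(X)\to\perf(Q)$ is a fully faithful functor between derived categories of smooth projective $\bbC$-schemes, Orlov's representability theorem \cite{orlov-represent} directly produces a kernel in $\perf(X\times Q)$. Your route buys uniformity with the other proofs in \S\ref{sec:proofs}; the paper's buys a cleaner argument that never needs a notion of Fourier--Mukai functor with twisted source or target. Either way the conclusion --- $\tau$ split injective with retraction $\sigma$, both well defined in $\Ab(\bbC)_\bbQ$ --- follows exactly as you say.
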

To the best of the authors knowledge, $X$ is known to be verepresentable only when $T$ is a point and $r=n\geq 3$. In this particular case, $X$ is curve, complete intersection of $r+1$ quadrics in $\bbP^{r+1}$, and $Q \to \bbP^{r}$ is a quadric
fibration of relative dimension $r$. On the other hand, very little is known about $Q$. For instance, it is not know if $Q$ is verepresentable neither if the bilinear pairings \eqref{eq:pairings1} are non-degenerate. Nevertheless, by
combining Kuznetsov's results \cite[Thms.~4.2 and 5.5]{kuznetquadrics} with the fact that $\perf(\bbP^r)$ is generated by
exceptional objects, we obtain the following semiorthogonal decomposition 
$$\perf(Q) = \langle \cC_{-r}, \ldots \cC_{-1}, \perf(X), E_1, \ldots, E_{r^2+r} \rangle\,,$$
where the $E_{-i}$'s are exceptional objects and the $\cC_i$'s are the Clifford modules
introduced by Kuznetsov in \cite[\S 3]{kuznetquadrics}. Using Theorem \ref{thm:hpd-gentype}, this gives rise
to the following (conditional) corollary:
\begin{corollary}
Let $X$ be a curve, complete intersection of $r+1$ quadrics in $\bbP^{r+1}$, and $Q \to \bbP^r$
the associated linear span. Assume that the pairings \eqref{eq:pairings1} (associated to $Q$) are non-degenerate and that the
modules $\cC_{-i}$ are exceptional objects in $\perf(\bbP^r,\cC_0)$. Under these assumptions, there is a well-defined
isogeny $\tau: J(X) \to \prod_{i=0}^{d_Q-1}J^i_a(Q)$. Assume moreover that $Q$ is verepresentable. Under this extra assumption, the map $\tau: J(X) \to J(Q)$ preserves the principal polarization.
Furthermore, if the modules $\cC_{-i}$ are exceptional objects in $\perf(\bbP^r,\cC_0)$, then $\tau$ becomes an isomorphism $J(X) \simeq J(Q)$
of principally polarized abelian varieties. 
\end{corollary}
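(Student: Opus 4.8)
The plan is to exhibit the pair consisting of the curve $X$ and the quadric fibration $Q$ as an instance of the categorical data underlying Theorems~\ref{thm:main1} and~\ref{thm:main2}, with $X$ playing the role of the ``$X$''-scheme and $Q$ that of the ``$Y$''-scheme. The common admissible category is all of $\perf(X)$: composing the fully faithful Fourier--Mukai functor $\perf(X)\hookrightarrow\perf(\bbP^r,\cC_0)$ with the Fourier--Mukai inclusion $\perf(\bbP^r,\cC_0)\hookrightarrow\perf(Q)$ coming from \eqref{eq:mot-decomp-1} identifies $\perf(X)$ with an admissible subcategory $\cT_Q\subset\perf(Q)$, and the displayed semi-orthogonal decomposition shows that its orthogonal complement is $\cT_Q^\perp=\langle\cC_{-r},\dots,\cC_{-1},E_1,\dots,E_{r^2+r}\rangle$. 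On the curve side $\cT_X=\perf(X)$, so $\cT_X^\perp=0$, and the induced functor $\Phi\colon\perf(X)\to\perf(Q)$ is a composite of Fourier--Mukai functors, so Conjecture~\ref{conj:Kuznetsov1} holds unconditionally in this situation. This forces the roles above, since only the curve has $\cT_X^\perp=0$ together with an ample canonical bundle.

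The first assertion is then Theorem~\ref{thm:hpd-gentype} specialized to $T=\mathrm{pt}$, $r=n$. Its non-degeneracy hypothesis holds: for the curve $X$ the pairings \eqref{eq:pairings1} are automatically non-degenerate, and for $Q$ this is assumed. Because $d_X=1$ we have $\prod_{i=0}^{d_X-1}J^i_a(X)=J^0_a(X)=J(X)$, so the split injective isogeny produced by that theorem takes precisely the form $\tau\colon J(X)\to\prod_{i=0}^{d_Q-1}J^i_a(Q)$.

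For the polarization statement I would invoke Theorem~\ref{thm:main2}. Its hypotheses are met: $X$ and $Q$ are verepresentable (the curve trivially, $Q$ by assumption), and the scheme carrying the full category $\cT_X=\perf(X)$, namely the curve $X$ of genus $g>1$, has ample canonical bundle; Conjecture~\ref{conj:Kuznetsov1} was verified above. Verepresentability of $Q$ collapses the target $\prod_{i}J^i_a(Q)$ to the single principally polarized Jacobian $J(Q)$, whence Theorem~\ref{thm:main2} gives that $\tau\colon J(X)\to J(Q)$ preserves the principal polarizations. To upgrade $\tau$ to an isomorphism it suffices, by item~(iii) of Theorem~\ref{thm:main1} (equivalently, the final line of Theorem~\ref{thm:main2}), to establish ${\bf J}(\cT_Q^{\perp,\dg})=0$. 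Here the hypothesis that the Clifford modules $\cC_{-i}$ are exceptional is decisive: together with the already exceptional $E_j$ it makes $\cT_Q^\perp$ a category admitting a full exceptional collection, and the Jacobian of such a category vanishes by the criterion recorded in item~(ii) of Theorem~\ref{thm:main1} and the additivity of ${\bf J}$ along semi-orthogonal decompositions. This identification of the orthogonal complement of the image of $\perf(X)$ with the span of the $\cC_{-i}$ and $E_j$, and the resulting vanishing of its Jacobian, is the only genuinely delicate step; the remainder is bookkeeping with the cited theorems.
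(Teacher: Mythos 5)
Your proposal is correct and takes essentially the same route as the paper's (implicit) proof: the paper likewise reads the displayed decomposition $\perf(Q)=\langle \cC_{-r},\ldots,\cC_{-1},\perf(X),E_1,\ldots,E_{r^2+r}\rangle$ as the categorical data with $\cT_X=\perf(X)$, gets the isogeny from Theorem~\ref{thm:hpd-gentype}, the polarization statement from Theorem~\ref{thm:main2} (the curve being verepresentable with ample canonical bundle, $Q$ verepresentable by hypothesis), and the final isomorphism from the vanishing ${\bf J}(\cT_Q^{\perp,\dg})=0$ forced by the exceptionality of the $\cC_{-i}$ together with the $E_j$. Your explicit checks of the automatic non-degeneracy of the pairings for the curve and of the Fourier--Mukai property of the composite functor are precisely the steps the paper delegates to the proof of Theorem~\ref{thm:hpd-gentype}.
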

\section{Background on dg categories}\label{sec:dg}
Let $\cC(\bbC)$ be the category of cochain complexes of $\bbC$-vector spaces; we use cohomological notation. A {\em differential graded
(=dg) category $\cA$} is a category enriched over $\cC(\bbC)$ (morphisms sets $\cA(x,y)$ are complexes) in such a way that the composition law fulfills the Leibniz rule $d(f \circ g) =d(f) \circ g +(-1)^{\mathrm{deg}(f)}f \circ d(g)$. A {\em dg functor} $F:\cA\to \cB$ is a
functor enriched over $\cC(\bbC)$; consult Keller's ICM address \cite{ICM-Keller} for further details. In what follows we will write
$\dgcat(\bbC)$ for the category of (small) dg categories and dg functors.  The {\em tensor product $\cA\otimes\cB$} of two dg categories $\cA$ and $\cB$ is defined
as follows: the set of objects is the cartesian product of the sets of objects of $\cA$ and $\cB$ and the complexes of morphisms are given by $(\cA\otimes\cB)((x,w),(y,z)):= \cA(x,y) \otimes \cB(w,z)$.
\subsection*{(Bi)modules}
Let $\cA$ be a dg category. Its {\em opposite} dg category $\cA^\op$ has the same objects and complexes of morphisms given by
$\cA^\op(x,y):=\cA(y,x)$. A {\em right $\cA$-module} $M$ is a dg functor $M:\cA^\op \to \cC^\dg(\bbC)$ with values in the dg category $\cC^\dg(\bbC)$ of cochain complexes of $\bbC$-vector spaces; see Keller \cite[\S2.3]{ICM-Keller}. We will write $\cC(\cA)$ for the category of right $\cA$-modules. Recall again from Keller \cite[\S3.2]{ICM-Keller} that the {\em derived
category $\cD(\cA)$ of $\cA$} is the localization of $\cC(\cA)$ with respect to the class of objectwise quasi-isomorphisms. Its full subcategory of compact objects will be denoted by $\cD_c(\cA)$.

Now, let $\cA$ and $\cB$ be two dg categories. A {\em $\cA\text{-}\cB$-bimodule $B$} is a dg functor $B:\cA \otimes \cB^\op\to \cC^\dg(\bbC)$, \ie a right $(\cA^\op \otimes \cB)$-module. A standard example is given by the $\cA\text{-}\cA$-bimodule
\begin{eqnarray}\label{eq:bimodule1}
\cA\otimes\cA^\op \too \cC^\dg(\bbC) && (x,y) \mapsto \cA(y,x)\,.
\end{eqnarray} 
Let us denote by $\rep(\cA,\cB)$ the full triangulated subcategory of $\cD(\cA^\op \otimes \cB)$ consisting of those $\cA\text{-}\cB$-bimodules
$B$ such that for every object $x \in \cA$ the right $\cB$-module $B(x,-)$ belongs to $\cD_c(\cB)$. Note that every dg functor
$F:\cA \to \cB$ gives rise to a $\cA\text{-}\cB$-bimodule
\begin{eqnarray*}
{}_FB:\cA\otimes \cB^\op \too \cC^\dg(\bbC) && (x,w) \mapsto \cB(w,F(x))
\end{eqnarray*}
which belongs to $\rep(\cA,\cB)$.
\subsection*{Morita equivalences}
A dg functor $F:\cA\to \cB$ is called a {\em Morita equivalence} if the restriction of scalars functor $\cD(\cB) \stackrel{\simeq}{\to} \cD(\cA)$
is an equivalence of (triangulated) categories; see Keller \cite[\S4.6]{ICM-Keller}. As proved in \cite[Thm.~5.3]{IMRN}, the category $\dgcat(\bbC)$
carries a Quillen model category whose weak equivalences are the Morita equivalences. Let us write $\Hmo(\bbC)$ for the homotopy category
obtained. As proved in {\em loc. cit.}, the assignment $F \mapsto {}_F B$ gives rise to a bijection 
\begin{equation}\label{eq:bij}
\Hom_{\Hmo(\bbC)}(\cA,\cB)\simeq \mathrm{Iso}\,\rep(\cA,\cB)\,,
\end{equation}
where $\mathrm{Iso}$ stands for the set of isomorphism classes. Moreover, under \eqref{eq:bij}, the composition law in $\Hmo(\bbC)$ corresponds
to the derived tensor product of bimodules. 
\subsection*{Pretriangulated dg categories}
Let $\cA$ be a dg category. The $\bbC$-linear category $\dgHo(\cA)$ has the same objects as $\cA$ and morphisms given by $\dgHo(\cA)(x,y):=H^0(\cA(x,y))$, where $H^0(-)$ is the $0^{\mathrm{th}}$ cohomology group functor. The dg category $\cA$ is called {\em pretriangulated} if $\dgHo(\cA)$ is a triangulated category; see Keller \cite[\S4.5]{ICM-Keller}.
\section{Background on noncommutative Chow motives}\label{sec:NCChow}
In this section we recall the construction of the category of noncommutative Chow motives. For further details we invite the reader to consult the survey article \cite{survey}. Recall from \S\ref{sec:dg} that one has a well-defined functor
\begin{eqnarray}\label{eq:functor11}
\dgcat(\bbC) \too \Hmo(\bbC) && F \mapsto {}_FB\,.
\end{eqnarray}
The {\em additivization of $\Hmo(\bbC)$} is the additive category $\Hmo_0(\bbC)$ which has the same objects as $\Hmo(\bbC)$ and abelian groups of morphisms given by $\Hom_{\Hmo_0(\bbC)}(\cA,\cB):=K_0\,\rep(\cA,\cB)$, where $K_0$ stands for the Grothendieck group of the triangulated category $\rep(\cA,\cB)$. The composition law is induced by the tensor product of bimodules; consult \cite[\S6]{IMRN} for further details. Note that we have a canonical functor
\begin{eqnarray}\label{eq:functor2}
\Hmo(\bbC) \too \Hmo_0(\bbC) && B \mapsto [B]\,.
\end{eqnarray}
The {\em $\bbQ$-linearization} of $\Hmo_0(\bbC)$ is the $\bbQ$-linear additive category $\Hmo_0(\bbC)_\bbQ$ obtained by tensoring each abelian group of morphisms of $\Hmo_0(\bbC)$ with $\bbQ$. Note that one has also a canonical functor
\begin{eqnarray}\label{eq:functor3}
\Hmo_0(\bbC) \too \Hmo_0(\bbC)_\bbQ && [B] \mapsto [B]\otimes_\bbZ \bbQ\,.
\end{eqnarray}
Since the three functors \eqref{eq:functor11}-\eqref{eq:functor3} are the identity on objects we will make no notational distinction between a dg category and its image in $\Hmo_0(\bbC)_\bbQ$.

Now, recall from Kontsevich \cite{IAS,Miami,finMot} that a dg category $\cA$ is called {\em smooth} if the above $\cA\text{-}\cA$-bimodule
\eqref{eq:bimodule1} belongs to $\cD_c(\cA^\op\otimes \cA)$ and {\em proper} if for each ordered pair of objects $(x,y)$ the inequality
$\sum_i \mathrm{dim}_\bbC H^i\cA(x,y)< \infty$ holds. Examples include finite dimensional $\bbC$-algebras of finite global dimension (\eg\ path algebras of finite quivers without oriented cycles) as well as the unique dg enhancements $\perf^\dg(X)$ of the derived categories $\perf(X)$ of perfect complexes; see Lunts-Orlov \cite{LO}. The category $\NChow(\bbC)_\bbQ$ of {\em noncommutative Chow motives} (with rational coefficients) is then by definition the pseudo-abelian envelope of the full subcategory of $\Hmo_0(\bbC)_\bbQ$ consisting of the smooth and proper dg categories. For the relationship between $\NChow(\bbC)_\bbQ$ and the classical category $\Chow(\bbC)_\bbQ$ of Chow motives we invite the reader to consult \cite{CvsNC}.
\section{Proof of Theorem~\ref{thm:main1}}
By assumption there exist semi-orthogonal decompositions $\perf(X)=\langle \cT_X,\cT_X^\perp\rangle$ and $ \perf(Y)=\langle \cT_Y,\cT_Y^\perp\rangle$ and an equivalence $\phi: \cT_X \stackrel{\simeq}{\to} \cT_Y$ of triangulated categories. Out of this data one constructs the composed functor
\begin{equation}\label{eq:composed-functor}
\Phi: \perf(X) \stackrel{\pi_X}{\too}\cT_X \stackrel{\phi}{\too} \cT_Y \stackrel{i_Y}{\too} \perf(Y)\,,
\end{equation}
where $\pi_X$ stands for the projection and $i_Y$ for the inclusion. Once again by assumption, $\Phi$ is of Fourier-Mukai type,
\ie there exists a perfect complex $\cE \in \perf(X\times Y)$ such that $\Phi$ is isomorphic to the Fourier-Mukai functor
$\Phi_\cE:= Rq_* (p^* (-) \otimes^{L} \cE)$; the complex $\cE$ is usually called the {\em kernel} of $\Phi_\cE$. 
Recall from Huybrechts \cite[Proposition~5.9]{huybrechts} that the right adjoint of $\Phi_\cE$ is also of Fourier-Mukai type and that its 
kernel is given by $\cE_R:=\cE^\vee \otimes^L p^\ast \omega_X[d_X]$, where $\cE^\vee$ is the dual of $\cE$ and $\omega_X[d_X]$ is the canonical line bundle of $X$ shifted by the dimension of $X$. Moreover, the unit $\gamma: \Id \Rightarrow \Phi_{\cE_R} \circ \Phi_\cE$ 
of this adjunction is an isomorphism when evaluated at any object of $\cT_X \subset \perf(X)$;
see Kuznetsov \cite[Thm.~3.3]{kuznetsov:hpd}.

Now, recall that the triangulated categories $\perf(X)$ and $\perf(Y)$ admit unique dg enhancements $\perf^\dg(X)$ and $\perf^\dg(Y)$. As proved by To{\"e}n \cite[\S8.3]{Toen}, every perfect complex $\cE \in \perf(X\times Y)$ gives rise to a dg functor
\begin{eqnarray}\label{eq:dg-enhancement}
\Phi^\dg_\cE: \perf^\dg(X) \too \perf^\dg(Y) && \cF \mapsto Rq_\ast(p^\ast(\cF) \otimes^L \cE)
\end{eqnarray}
such that $\dgHo(\Phi_\cE^\dg)\simeq \Phi_\cE$. Moreover, one has the following bijection
\begin{eqnarray}\label{eq:bijection-kernels1}
& \mathrm{Iso}\,\perf(X\times Y) \stackrel{\simeq}{\too} \Hom_{\Hmo(\bbC)}(\perf^\dg(X),\perf^\dg(Y)) & \cE \mapsto \Phi^\dg_\cE\,.
\end{eqnarray}
The following result is known to the experts, but to the best of the authors' knowledge, no proof can be found in the literature.
\begin{proposition}\label{prop:FMequalsdg}
The following conditions are equivalent:
\begin{itemize}
\item[(i)] (Kuznetsov's conjecture) The functor \eqref{eq:composed-functor} is of Fourier-Mukai type;
\item[(ii)] The functor \eqref{eq:composed-functor} admits a {\em dg enhancement}, \ie there exists a dg functor $\Phi^\dg:\perf^\dg(X) \to \perf^\dg(Y)$ such that $\dgHo(\Phi^\dg)\simeq \Phi$.
\end{itemize}
\end{proposition}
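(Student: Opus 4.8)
The plan is to establish the two implications separately; the implication (i)$\Rightarrow$(ii) is essentially formal, whereas (ii)$\Rightarrow$(i) carries the actual content.

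First I would treat (i)$\Rightarrow$(ii). Assume $\Phi \simeq \Phi_\cE$ for some kernel $\cE \in \perf(X\times Y)$. By To{\"e}n's construction \eqref{eq:dg-enhancement}, the Fourier-Mukai functor $\Phi_\cE$ already carries a canonical dg enhancement $\Phi^\dg_\cE : \perf^\dg(X) \to \perf^\dg(Y)$ with $\dgHo(\Phi^\dg_\cE) \simeq \Phi_\cE \simeq \Phi$. Taking $\Phi^\dg := \Phi^\dg_\cE$ then supplies the required dg enhancement, so nothing further is needed here.

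Now for (ii)$\Rightarrow$(i). Suppose we are given a dg functor $\Phi^\dg : \perf^\dg(X) \to \perf^\dg(Y)$ with $\dgHo(\Phi^\dg) \simeq \Phi$. Passing through \eqref{eq:functor11}, the functor $\Phi^\dg$ determines a morphism in $\Hom_{\Hmo(\bbC)}(\perf^\dg(X), \perf^\dg(Y))$, represented by the bimodule ${}_{\Phi^\dg}B \in \rep(\perf^\dg(X),\perf^\dg(Y))$. By To{\"e}n's bijection \eqref{eq:bijection-kernels1} there is a kernel $\cE \in \perf(X\times Y)$, unique up to isomorphism, whose associated dg functor $\Phi^\dg_\cE$ represents the same class in $\Hmo(\bbC)$; equivalently ${}_{\Phi^\dg}B \simeq {}_{\Phi^\dg_\cE}B$ in $\cD((\perf^\dg(X))^\op \otimes \perf^\dg(Y))$. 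It then suffices to deduce $\Phi = \dgHo(\Phi^\dg) \simeq \dgHo(\Phi^\dg_\cE) \simeq \Phi_\cE$, which exhibits $\Phi$ as a Fourier-Mukai functor.

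The crux, and the step I expect to be the main obstacle, is this last chain of isomorphisms: I must verify that the triangulated functor induced on homotopy categories depends, up to isomorphism, only on the class of a dg functor in $\Hmo(\bbC)$, \ie that isomorphic bimodules yield isomorphic functors on $\dgHo$. To see this I would argue by representability. For any dg functor $G:\cA\to\cB$ and object $x$, the right $\cB$-module ${}_GB(x,-)$ is by definition $w \mapsto \cB(w, G(x))$, \ie the representable module $\cB(-, G(x))$. The Yoneda assignment $\dgHo(\cB) \to \cD(\cB)$, $b \mapsto \cB(-,b)$, is fully faithful, since $\Hom_{\cD(\cB)}(\cB(-,b), \cB(-,b')) = H^0(\cB(b,b')) = \dgHo(\cB)(b,b')$. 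Hence the isomorphism ${}_{\Phi^\dg}B \simeq {}_{\Phi^\dg_\cE}B$ restricts, for each $x$, to an isomorphism $\cB(-, \Phi^\dg(x)) \simeq \cB(-, \Phi^\dg_\cE(x))$ of representable $\cB$-modules, which by full faithfulness comes from a unique isomorphism $\Phi^\dg(x) \simeq \Phi^\dg_\cE(x)$ in $\dgHo(\perf^\dg(Y)) = \perf(Y)$; functoriality of the bimodule isomorphism in $x$ makes these isomorphisms natural, producing $\dgHo(\Phi^\dg) \simeq \dgHo(\Phi^\dg_\cE)$. Since the enhancements $\perf^\dg(X)$, $\perf^\dg(Y)$ are the unique (Lunts-Orlov) pretriangulated ones with $\dgHo(\perf^\dg(X)) = \perf(X)$ and $\dgHo(\perf^\dg(Y)) = \perf(Y)$, this Yoneda argument applies verbatim, and combining the two implications completes the proof.
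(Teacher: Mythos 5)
Your proof is correct and follows essentially the same route as the paper's: the implication (i)$\Rightarrow$(ii) via To\"en's canonical dg enhancement \eqref{eq:dg-enhancement}, and (ii)$\Rightarrow$(i) by using the bijection \eqref{eq:bijection-kernels1} to produce a kernel $\cE$ with ${}_{\Phi^\dg}B \simeq {}_{\Phi^\dg_\cE}B$ and then concluding $\Phi \simeq \Phi_\cE$. The only difference is that your Yoneda/representability argument explicitly justifies the step the paper treats as immediate, namely that isomorphic bimodules in $\rep(\perf^\dg(X),\perf^\dg(Y))$ induce isomorphic triangulated functors on $\dgHo(-)$; this amplification is correct (it is essentially the converse direction of the paper's Lemma~\ref{lem:quasi-bimodules}) and harmless.
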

\begin{proof}
Assume that $\Phi$ is of Fourier-Mukai type with kernel $\cE$. Then, take for $\Phi^\dg$ the dg functor \eqref{eq:dg-enhancement}. This shows the implication (i)$\Rightarrow$(ii). Assume now that $\Phi$ admits a dg enhancement $\Phi^\dg$. Thanks to bijection \eqref{eq:bijection-kernels1}, $\Phi^\dg$ is of the form $\Phi_\cE^\dg$ for a certain perfect complex $\cE \in \perf(X\times Y)$. Using the equivalence $\dgHo(\Phi^\dg)\simeq \Phi$, one then concludes that $\Phi$ is of the form $\Phi_\cE$ (and hence of Fourier-Mukai type). This shows the implication (ii)$\Rightarrow$(i).
\end{proof}
Let us denote by $i_X^\dg$ the inclusion of dg categories $\cT_X^\dg \to \perf^\dg(X)$. The above projection functor $\pi_X: \perf(X) \to \cT_X$ also admits a dg enhancement:
\begin{lemma}\label{lem:projection}
There exists a well-defined morphism $\pi_X^\dg:\perf^\dg(X) \to \cT_X^\dg$ in $\Hmo(\bbC)$ such that $\pi_X^\dg\circ i_X^\dg=\Id$.
\end{lemma}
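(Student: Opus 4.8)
The plan is to build $\pi_X^\dg$ directly as (the isomorphism class of) a bimodule and then to verify the relation $\pi_X^\dg\circ i_X^\dg=\Id$ through the dictionary \eqref{eq:bij}, under which morphisms in $\Hmo(\bbC)$ are isomorphism classes of objects of $\rep$ and composition is the derived tensor product of bimodules. Since $\cT_X$ is admissible in $\perf(X)$, the inclusion $i_X$ has a right adjoint $i_X^{!}$, which is the projection $\pi_X\colon\perf(X)\to\cT_X$; by full faithfulness $\pi_X\circ i_X\simeq\Id$, and the counit $i_X\pi_X(F)\to F$ fits every $F$ into a distinguished triangle $i_X\pi_X(F)\to F\to F^{\perp}$ with $F^{\perp}$ lying in the right orthogonal of $\cT_X$. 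I would take as candidate for $\pi_X^\dg$ the bimodule
\begin{equation*}
B\colon \perf^\dg(X)\otimes(\cT_X^\dg)^\op\too\cC^\dg(\bbC)\ko (F,T)\mapsto \perf^\dg(X)(i_X^\dg T,F),
\end{equation*}
namely the restriction of the diagonal bimodule \eqref{eq:bimodule1} of $\perf^\dg(X)$ along $i_X^\dg$ in the $\cT_X^\dg$-variable. This is visibly a bimodule and is the natural dg incarnation of the adjoint $\pi_X$.

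Next I would check that $B$ lands in $\rep(\perf^\dg(X),\cT_X^\dg)$, i.e.\ that for each $F$ the right $\cT_X^\dg$-module $B(F,-)=\perf^\dg(X)(i_X^\dg(-),F)$ is compact. Applying $\perf^\dg(X)(i_X^\dg(-),-)$ to the counit triangle and using the orthogonality relation $\perf^\dg(X)(i_X^\dg T,F^{\perp})\simeq 0$ for $T\in\cT_X$, one gets a quasi-isomorphism of $\cT_X^\dg$-modules $B(F,-)\simeq\cT_X^\dg(-,\pi_X F)$, where I use that $\cT_X^\dg$ is a full dg subcategory of $\perf^\dg(X)$ and that $\pi_X F\in\cT_X$. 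Hence $B(F,-)$ is quasi-isomorphic to the representable module at $\pi_X F$, which belongs to $\cD_c(\cT_X^\dg)$; this shows $B\in\rep$ and so defines a morphism $\pi_X^\dg\in\Hom_{\Hmo(\bbC)}(\perf^\dg(X),\cT_X^\dg)$.

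Finally I would compute the composite. The inclusion $i_X^\dg$ corresponds to the bimodule ${}_{i_X^\dg}B\in\rep(\cT_X^\dg,\perf^\dg(X))$, given by $(T,F)\mapsto\perf^\dg(X)(F,i_X^\dg T)$, so by \eqref{eq:bij} the composition $\pi_X^\dg\circ i_X^\dg$ is represented by ${}_{i_X^\dg}B\otimes^{\Left}_{\perf^\dg(X)}B\in\rep(\cT_X^\dg,\cT_X^\dg)$. Since ${}_{i_X^\dg}B(T,-)$ is the module represented by $i_X^\dg T$, a co-Yoneda (derived tensor against a representable) computation gives
\begin{equation*}
\big({}_{i_X^\dg}B\otimes^{\Left}_{\perf^\dg(X)}B\big)(T,T')\simeq \perf^\dg(X)(i_X^\dg T',i_X^\dg T)=\cT_X^\dg(T',T),
\end{equation*}
again by fullness of $\cT_X^\dg$. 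The right-hand side is precisely the diagonal bimodule of $\cT_X^\dg$, which represents $\Id$, so $\pi_X^\dg\circ i_X^\dg=\Id$ in $\Hmo(\bbC)$, as required.

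The main obstacle is the compactness step: one must know not merely that the triangulated adjoint $\pi_X$ exists but that the associated module $\perf^\dg(X)(i_X^\dg(-),F)$ is perfect over $\cT_X^\dg$, and this is exactly where admissibility enters, through the counit triangle together with semiorthogonality. The composition step is then a formal co-Yoneda manipulation, the only minor care being that the derived tensor product against the representable bimodule ${}_{i_X^\dg}B(T,-)$ may be computed underived up to quasi-isomorphism. (If the projection in \eqref{eq:composed-functor} is taken to be the left rather than the right adjoint, the symmetric bimodule realizes it and the argument is unchanged.)
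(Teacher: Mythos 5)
Your proof is correct, but it takes a genuinely different route from the paper's. The paper's argument uses neither adjoints nor bimodule computations: it introduces the full dg subcategory $T\subset\perf^\dg(X)$ whose objects are those of $\cT_X^\dg$ together with those of $\cT_X^{\perp,\dg}$, observes that these objects generate $\perf(X)$ by the semi-orthogonal decomposition, so that the inclusion $T\subset\perf^\dg(X)$ is a Morita equivalence by Keller's criterion and hence an isomorphism in $\Hmo(\bbC)$, and then defines $\pi_X^\dg$ as the inverse of this isomorphism followed by the strict dg functor $T\to\cT_X^\dg$ which is the identity on $\cT_X^\dg$ and sends the remaining objects to zero. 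The paper's route buys brevity (generation does all the work, and the identity $\pi_X^\dg\circ i_X^\dg=\Id$ holds on the nose), but it rests on the assertion that the ``kill the complement'' assignment is a strict dg functor, a point more delicate than it looks: the Hom-complexes from objects of $\cT_X^{\perp,\dg}$ to objects of $\cT_X^\dg$ are only acyclic, not zero, so chain-level compositions factoring through complement objects need not vanish strictly. Your route buys precisely this robustness: working on the bimodule side of \eqref{eq:bij} you never kill anything strictly; admissibility enters through the counit triangle, which identifies $B(F,-)$ up to quasi-isomorphism with the representable (hence compact) module $\cT_X^\dg(-,\pi_X F)$, and the retraction identity reduces to co-Yoneda, the representables being cofibrant so that the derived tensor product may be computed underived. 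The price is the use of the right adjoint, which is indeed available since $\cT_X$ is admissible by hypothesis. One caveat: your $\pi_X^\dg$ is the bimodule incarnation of the right adjoint of $i_X$, whereas under the paper's conventions the projection in \eqref{eq:composed-functor} is the left adjoint, and your closing parenthetical does not repair this, because $(F,T)\mapsto\perf^\dg(X)(F,i_X^\dg T)$ is contravariant in $F$ and hence is not a $\perf^\dg(X)$-$\cT_X^\dg$-bimodule of the required variance. The mismatch is harmless here: the Lemma, and its only use (via Lemma \ref{lem:key3} in the proof of item (ii) of Theorem \ref{thm:main1}), require nothing more than the existence of some retraction of $i_X^\dg$ in $\Hmo(\bbC)$, which is exactly what you construct.
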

\begin{proof}
Recall first that we have the following inclusions of dg categories
\begin{eqnarray}
i_X^\dg: \cT_X^\dg \to \perf^\dg(X) && i_X^{\perp,\dg}: \cT_X^{\perp,\dg} \to \perf^\dg(X)\,.
\end{eqnarray}
Their intersection in $\perf^\dg(X)$ is the zero object. Hence, let us denote by $T$ the full dg subcategory of $\perf^\dg(X)$ consisting of those objects that belong to $\cT_X^\dg$ or to $\cT_X^{\perp,\dg}$. Note that the dg functor $i_X^\dg$ factors through the inclusion $T\subset \perf^\dg(X)$. Since by hypothesis one has a semi-orthogonal decomposition $\perf(X)=\langle \cT_X,\cT_X^\perp\rangle$, the objects of $\dgHo(T)$ form a set of generators of $\perf(X)$. Consequently, the inclusion of dg categories $T \subset \perf^\dg(X)$ is a Morita equivalence (see Keller \cite[Lemma~3.10]{ICM-Keller}) and hence an isomorphism in $\Hmo(\bbC)$. Now, clearly one has a well-defined dg functor $\pi_X^\dg:T \to \cT_X^\dg$ that is the identity of $\cT_X^\dg$ and which sends all the remaining objects to zero. This achieves the proof. 
\end{proof}
\begin{notation}
Recall from Keller \cite[\S2.3]{ICM-Keller} that given two dg functors $F,G: \cA \to \cB$, the {\em complex of morphisms} $\underline{\mathrm{Hom}}(F,G)$ has as its $n^{\mathrm{th}}$ component the $\bbC$-vector space formed by the families of morphisms $\phi_x \in \cB(F(x),G(x))^n$ of degree $n$ such that $G(f) \circ \phi_x = \phi_x \circ F(f)$ for all $f \in \cA(x,y)$ and $x,y \in \cA$. The differential is induced by that of $\cB(F(x),G(x))$. The set of {\em morphisms} $\nu$ from $F$ to $G$ is by definition in bijection with $Z^0 \underline{\mathrm{Hom}}(F,G)$, where $Z^0(-)$ denotes the degree zero cycles functor.
\end{notation}
\begin{lemma}\label{lem:quasi-bimodules}
Let $F,G: \cA \to \cB$ be two dg functors between pretriangulated dg categories and $\nu: F \Rightarrow G$ a morphism from $F$ to $G$. This data gives naturally rise to a morphism between $\cA\text{-}\cB$-bimodules ${}_\nu B:{}_F B \Rightarrow {}_G B$ and to a natural transformation $\dgHo(\nu):\dgHo(F) \Rightarrow \dgHo(G)$ between triangulated functors. Under these notations, whenever $\dgHo(\nu)$ is an isomorphism, ${}_\nu B$ is a quasi-isomorphism.
\end{lemma}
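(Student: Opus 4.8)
The plan is to construct the two pieces of structure by hand and then reduce the quasi-isomorphism assertion to an elementary chain-homotopy argument. Recall that $\nu$ is a family of degree-zero cocycles $\nu_x \in Z^0\cB(F(x),G(x))$ satisfying $G(f)\circ \nu_x = \nu_y \circ F(f)$ for every $f \in \cA(x,y)$. First I would define $\,{}_\nu B$ at an object $(x,w)$ of $\cA \otimes \cB^\op$ to be post-composition
\[
\bigl({}_\nu B\bigr)_{(x,w)}\colon \cB(w,F(x)) \too \cB(w,G(x)), \qquad \alpha \mapsto \nu_x \circ \alpha .
\]
Because $\nu_x$ is a degree-zero cocycle, the Leibniz rule gives $d(\nu_x\circ\alpha)=\nu_x\circ d\alpha$, so each such map is a morphism of complexes; and the compatibility $G(f)\circ\nu_x=\nu_y\circ F(f)$ is exactly what is needed for these maps to be natural in $(x,w)$, i.e.\ to assemble into a morphism of $\cA\text{-}\cB$-bimodules. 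Passing to $H^0$ then yields the natural transformation $\dgHo(\nu)$ between the induced triangulated functors, whose component at $x$ is the class $[\nu_x]\in H^0\cB(F(x),G(x))=\dgHo(\cB)(\dgHo(F)(x),\dgHo(G)(x))$. These verifications are routine.

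For the main assertion, I would observe that $\,{}_\nu B$ is a quasi-isomorphism precisely when, for every pair $(x,w)$, the chain map $\alpha \mapsto \nu_x\circ\alpha$ is a quasi-isomorphism of complexes of $\bbC$-vector spaces. Fix $x$. Since $\cB$ is pretriangulated, $\dgHo(\cB)$ is triangulated, and by hypothesis $\dgHo(\nu)$ is an isomorphism, so the class $[\nu_x]$ is invertible in $\dgHo(\cB)$. I would lift its inverse to a degree-zero cocycle $\mu\in Z^0\cB(G(x),F(x))$, so that
\[
\mu\circ\nu_x-\id_{F(x)}=d(h), \qquad \nu_x\circ\mu-\id_{G(x)}=d(h')
\]
for some $h\in\cB(F(x),F(x))^{-1}$ and $h'\in\cB(G(x),G(x))^{-1}$.

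The key computation is the identity $(d\beta)_\ast=[d,\beta_\ast]$ for post-composition by an arbitrary morphism $\beta$, which follows from the Leibniz rule (for $\beta=h$ of degree $-1$ this reads $(dh)_\ast = d\circ h_\ast + h_\ast\circ d$). Applying it shows that $\mu_\ast\circ(\nu_x)_\ast-\id=(dh)_\ast$ is a coboundary in the complex of graded maps, i.e.\ $\mu_\ast\circ(\nu_x)_\ast$ is chain-homotopic to the identity on $\cB(w,F(x))$, and symmetrically $(\nu_x)_\ast\circ\mu_\ast$ is chain-homotopic to the identity on $\cB(w,G(x))$. Hence $(\nu_x)_\ast$ is a chain-homotopy equivalence for every $w$, and in particular a quasi-isomorphism, which gives the claim. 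The only delicate point is bookkeeping of the degree $-1$ shift and the sign in the homotopy formula; note that the pretriangulated hypothesis is used only to guarantee that $[\nu_x]$ admits an inverse in $\dgHo(\cB)$, after which no cone constructions are needed. (Alternatively one could argue that the cone of $\nu_x$ is contractible in $\cB$ and apply $\cB(w,-)$, but the homotopy-equivalence argument has the advantage of avoiding any comparison between module cones and representable modules.)
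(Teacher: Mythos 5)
Your proof is correct, but the argument for the main assertion takes a genuinely different route from the paper's. The paper exploits the pretriangulated hypothesis directly: since $\cB$ admits shifts of objects, the cohomology groups $H^i\cB(w,F(x))$ are identified with the Hom-sets $\Hom_{\dgHo(\cB)}(w[i],F(x))$ in the triangulated category $\dgHo(\cB)$, and post-composition with the isomorphism $\dgHo(\nu)_x$ is then automatically bijective on such Hom-sets; that is the entire verification. You instead lift the inverse of $[\nu_x]$ to a degree-zero cocycle $\mu$ and use the Leibniz rule to convert the coboundary relations $\mu\circ\nu_x-\id=d(h)$ and $\nu_x\circ\mu-\id=d(h')$ into chain homotopies after applying $\cB(w,-)$. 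Your route is more elementary and proves something slightly stronger: each $(\nu_x)_\ast$ is a chain homotopy equivalence, not merely a quasi-isomorphism, and the existence of shifts in $\cB$ is never used, so the argument works for arbitrary dg categories. The paper's route buys brevity, since with the pretriangulated setup in place the whole proof is two lines. One small correction to your closing parenthetical: the pretriangulated hypothesis is not what guarantees that $[\nu_x]$ is invertible in $\dgHo(\cB)$ --- that invertibility is exactly the hypothesis that $\dgHo(\nu)$ is an isomorphism, stated componentwise; pretriangulatedness enters the lemma only so that $\dgHo(F)$ and $\dgHo(G)$ are functors between triangulated categories (and, in the paper's proof, so that the shift-based identification of $H^i$ with Hom-sets is available).
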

\begin{proof}
Let $x \in \cA$ and $w \in \cB$. One needs to prove that the induced homomorphisms
\begin{eqnarray}\label{eq:induced1}
(\nu_x)_\ast: H^i\cB(w,F(x)) \to H^i\cB(w,G(x)) && i \in \bbZ
\end{eqnarray}
are isomorphisms. Recall that $\cA$ and $\cB$ are pretriangulated. Hence, \eqref{eq:induced1} identifies with the induced homomorphisms
\begin{eqnarray}\label{eq:induced2}
&(\dgHo(\eta)_x)_\ast: \Hom_{\dgHo(\cB)}(w[i],F(x)) \to \Hom_{\dgHo(\cB)}(w[i],G(x)) & i \in \bbZ\,,
\end{eqnarray}
where $w[i]$ is the $i^{\mathrm{th}}$ suspension of $z$ in the triangulated category $\dgHo(\cB)$. Since by hypothesis $\dgHo(\eta)$ is an isomorphism, one then concludes that \eqref{eq:induced2} (and hence that \eqref{eq:induced1}) is an isomorphism. This achieves the proof.
\end{proof}
\begin{proposition}
One has the following commutative diagram
\begin{equation}\label{eq:com2}
\xymatrix{
\perf^\dg(X) \ar[r]^-{\Phi_\cE^\dg} & \perf^\dg(Y) \ar[r]^-{\Phi_{\cE_R}^\dg} & \perf^\dg(X) \\
\cT_X^\dg \ar[u]^-{i_X^\dg} \ar@{=}[rr] && \cT_X^\dg \ar[u]_-{i_X^\dg}
}
\end{equation}
in the homotopy category $\Hmo(\bbC)$.
\end{proposition}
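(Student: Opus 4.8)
The plan is to read the commutativity of \eqref{eq:com2} as a single instance of Lemma~\ref{lem:quasi-bimodules}. Since $i_X^\dg$ is an honest inclusion of dg categories and $\Phi_\cE^\dg,\Phi_{\cE_R}^\dg$ are the honest dg functors produced in \eqref{eq:dg-enhancement}, both legs to be compared,
\[
F:=i_X^\dg \qquad\text{and}\qquad G:=\Phi_{\cE_R}^\dg\circ\Phi_\cE^\dg\circ i_X^\dg,
\]
are genuine dg functors $\cT_X^\dg\to\perf^\dg(X)$ between pretriangulated dg categories, with $\dgHo(F)=i_X$ and $\dgHo(G)=\Phi_{\cE_R}\circ\Phi_\cE\circ i_X$. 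By the bijection~\eqref{eq:bij}, establishing \eqref{eq:com2} amounts to exhibiting an isomorphism in $\mathrm{Iso}\,\rep(\cT_X^\dg,\perf^\dg(X))$ between the associated bimodules ${}_F B$ and ${}_G B$; and since $\rep$ sits as a full subcategory of a derived category, where quasi-isomorphisms are isomorphisms, it suffices to produce a \emph{quasi-isomorphism} ${}_F B\Rightarrow{}_G B$.

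First I would lift the adjunction unit to the dg level. Recall from Huybrechts \cite[Proposition~5.9]{huybrechts} that $(\Phi_\cE,\Phi_{\cE_R})$ is an adjoint pair with unit $\gamma:\Id\Rightarrow\Phi_{\cE_R}\circ\Phi_\cE$, and from Kuznetsov \cite[Thm.~3.3]{kuznetsov:hpd} that $\gamma$ is an isomorphism when evaluated at any object of $\cT_X$. The key point is that $\gamma$ is assembled from the unit/counit maps of the pull-back, push-forward, tensor-product and duality functors underlying the Fourier--Mukai kernels $\cE$ and $\cE_R=\cE^\vee\otimes^L p^\ast\omega_X[d_X]$, all of which are defined already at the level of complexes (before passing to cohomology). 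Using the compatibility of To\"en's enhancement \cite[\S8.3]{Toen} with these operations, $\gamma$ therefore promotes to a dg-natural transformation $\gamma^\dg\colon\Id_{\perf^\dg(X)}\Rightarrow\Phi_{\cE_R}^\dg\circ\Phi_\cE^\dg$, \ie a degree-zero cycle in $\underline{\mathrm{Hom}}(\Id,\Phi_{\cE_R}^\dg\circ\Phi_\cE^\dg)$ with $\dgHo(\gamma^\dg)=\gamma$. Whiskering $\gamma^\dg$ on the right with $i_X^\dg$ then yields a morphism of dg functors $\nu\colon F\Rightarrow G$ with $\dgHo(\nu)=\gamma|_{\cT_X}$.

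With $\nu$ in hand the argument closes formally. By Kuznetsov's theorem $\dgHo(\nu)=\gamma|_{\cT_X}$ is a natural isomorphism, so Lemma~\ref{lem:quasi-bimodules} applies and shows that ${}_\nu B\colon{}_F B\Rightarrow{}_G B$ is a quasi-isomorphism of $\cT_X^\dg\text{-}\perf^\dg(X)$-bimodules. Hence ${}_F B$ and ${}_G B$ become isomorphic in the derived category, and thus represent the same element of $\mathrm{Iso}\,\rep(\cT_X^\dg,\perf^\dg(X))$. Through \eqref{eq:bij} this is exactly the equality $\Phi_{\cE_R}^\dg\circ\Phi_\cE^\dg\circ i_X^\dg=i_X^\dg$ in $\Hmo(\bbC)$, which is the commutativity of~\eqref{eq:com2}.

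The step I expect to be the main obstacle is the second paragraph: upgrading the triangulated-level unit $\gamma$ to a genuine \emph{dg-natural} transformation $\nu$, that is, to an actual degree-zero cycle in $\underline{\mathrm{Hom}}(F,G)$ rather than to merely an objectwise isomorphism or an up-to-homotopy datum. (Strictly speaking Lemma~\ref{lem:quasi-bimodules} only requires \emph{some} $\nu$ with $\dgHo(\nu)$ invertible, so one does not need $\nu$ to recover $\gamma$ on the nose; but the lift of the unit is the natural and essentially only candidate.) This is the one point where the enhanced functoriality of the Fourier--Mukai formalism is genuinely used, since natural transformations of triangulated functors need not lift to dg-natural transformations in general; everything afterwards is formal, being a direct application of Lemma~\ref{lem:quasi-bimodules} together with the dictionary~\eqref{eq:bij}.
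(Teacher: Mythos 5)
Your proof has exactly the same skeleton as the paper's: produce a dg-level lift of the adjunction unit, whisker it with $i_X^\dg$, note that Kuznetsov's theorem \cite[Thm.~3.3]{kuznetsov:hpd} makes its image under $\dgHo$ invertible, apply Lemma~\ref{lem:quasi-bimodules} to get a quasi-isomorphism of bimodules, and conclude through the bijection \eqref{eq:bij}. Everything before and after the lift is correct and coincides with what the paper does.

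But the lift itself --- your second paragraph --- is a genuine gap, not merely the ``main obstacle'': you assert that $\gamma$ promotes to a dg-natural transformation because it is ``assembled from unit/counit maps defined already at the level of complexes'' together with an unspecified ``compatibility of To\"en's enhancement with these operations.'' This begs the question. The functors $p^\ast$, $\otimes^L\cE$, $Rq_\ast$ are defined at chain level only after choosing resolutions, and whether the resulting zig-zag of chain maps can be straightened into an honest degree-zero cycle of $\underline{\mathrm{Hom}}(F,G)$ is precisely the issue you yourself concede can fail for general triangulated natural transformations; your parenthetical remark that \emph{some} $\nu$ with $\dgHo(\nu)$ invertible would suffice is true but does not help, since you produce none. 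The paper closes this gap with a concrete idea absent from your proposal: pass to the Caldararu--Willerton $2$-category $\Var(\bbC)$ of integral kernels \cite{Caldararu}, in which the adjunction $(\cE,\cE_R)$ already lives; its unit is then a \emph{morphism of kernels} $\Gamma: R\Delta_\ast(\cO_X)\to \cE_R\circ\cE$ in $\perf(X\times X)$, \ie an object-level morphism rather than a natural transformation of functors. Such a morphism admits a chain-level representative $\overline{\Gamma}$, and To\"en's theorem \cite[\S8.3]{Toen} --- that under \eqref{eq:bijection-kernels1} convolution of kernels computes composition in $\Hmo(\bbC)$ --- turns $\overline{\Gamma}$ into a morphism of dg functors $\Phi^\dg_{\overline{\Gamma}}: \Id \Rightarrow \Phi^\dg_{\cE_R}\circ \Phi^\dg_\cE$ with $\dgHo(\Phi^\dg_{\overline{\Gamma}}) = \Phi_\Gamma = \gamma$. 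In short, the missing idea is to replace the functor-level lifting problem (hard, and in general unsolvable) by the kernel-level one (trivial). With that ingredient supplied, the rest of your argument goes through verbatim.
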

\begin{proof}
Recall from Caldararu-Willerton~\cite{Caldararu} the construction of the $2$-category $\Var(\bbC)$ of integral kernels. The objects are the smooth projective $\bbC$-schemes, the categories of morphisms are given by $\Hom_{\Var(\bbC)}(X,Y):=\perf(X\times Y)$, the composition law is induced by the convolution of kernels, and the identity of every object $X$ is the structure sheaf $R\Delta_\ast(\cO_X) \in \perf(X\times X)$ of the diagonal $\Delta \subset X\times X$. 

Let $\cE$ be a perfect complex of $\cO_{X\times Y}$-modules, \ie a morphism in $\Var(\bbC)$ from $X$ to $Y$. As explained by Caldararu-Willerton \cite[\S3.2 and Appendix]{Caldararu}, its right adjoint in the $2$-category $\Var(\bbC)$ is given by $\Sigma_X \circ \cE^\vee$, where $\cE^\vee \in \perf(Y\times X)$ is the dual of $\cE$ and $\Sigma_X \in \perf(X\times X)$ is the Serre kernel $R\Delta_\ast(\omega_X[d_X])$. Since the composition of Fourier-Mukai functors corresponds to the convolution of kernels (see Huybrechts \cite[Proposition 5.10]{huybrechts}), and $R\Delta_{\ast}(\omega_X)$ is mapped to $p^\ast\omega_X$ (via pull-back push-forward), we conclude that $\Sigma_X \circ \cE^\vee \simeq \cE_R:=\cE^\vee \otimes^L p^*\omega_X[d_X]$. In particular, we have a well-defined unit morphism $\Gamma: R\Delta_\ast(\cO_X) \to \cE_R \circ \cE$ in $\perf(X \times X)$. Now, recall from Caldararu-Willerton \cite[\S1.2]{Caldararu} that one has a well-defined $2$-functor
$$ \perf(-): \Var(\bbC) \too \Cat$$
with values in the $2$-category of categories. A $\bbC$-scheme $X$ is mapped to the derived category $\perf(X)$ of perfect complexes, a kernel $\cE \in \perf(X\times Y)$ to the Fourier-Mukai functor $\Phi_\cE:\perf(X) \to \perf(Y)$, and a morphism $\mu: \cE\Rightarrow \cE'$ in $\perf(X\times Y)$ to a natural transformation $\Phi_{\mu}:\Phi_\cE \Rightarrow \Phi_{\cE'}$. In particular, the above adjunction $(\cE,\cE_R)$ in $\Var(\bbC)$ gives rise to the classical adjunction of categories
$$
\xymatrix{
\perf(Y) \ar@<1ex>[d]^-{\Phi_{\cE_R}} \\
\perf(X) \ar@<1ex>[u]^-{\Phi_\cE}\,,
}
$$
with unit morphism $\gamma=\Phi_\Gamma:\Id \Rightarrow \Phi_{\cE_R} \circ \Phi_\cE$. Now, choose a representative $\overline{\Gamma}$ (\ie a morphism of complexes of $\cO_{X\times X}$-modules) for the unit morphism $\Gamma: R\Delta_\ast(\cO_X) \to \cE_R \circ \cE$. As proved by To{\"e}n \cite[\S8.3]{Toen}, the composition law in $\Hmo(\bbC)$ corresponds under the above bijection \eqref{eq:bijection-kernels1} to the convolution of kernels. Hence, $\overline{\Gamma}$ gives naturally rise to a morphism
$$ \Phi_{\overline{\Gamma}}^\dg: \Id= \Phi^\dg_{R\Delta_\ast(\cO_X)} \Rightarrow \Phi^\dg_{\cE_R \circ \cE} = \Phi^\dg_{\cE_R} \circ \Phi^\dg_\cE$$
between dg functors. By precomposing all this data with the inclusion $i_X^\dg: \cT_X^\dg \to \perf^\dg(X)$ one then obtains a well-defined morphism $\Phi_{\overline{\Gamma}}^\dg \circ i_X^\dg$ from the dg functor $i_X^\dg:\cT_X^\dg \to \perf^\dg(X)$ to the composed dg functor 
\begin{equation}\label{eq:composed-dg}
 \cT_X^\dg \stackrel{i_X^\dg}{\too} \perf^\dg(X) \stackrel{\Phi_\cE^\dg}{\too} \perf^\dg(Y) \stackrel{\Phi^\dg_{\cE_R}}{\too} \perf^\dg(X)\,.
 \end{equation}
We now claim that this data satisfies the conditions of the general Lemma~\ref{lem:quasi-bimodules} (with $\nu=\Phi_{\overline{\Gamma}}^\dg \circ i_X^\dg$). This follows from the equalities
\begin{eqnarray*}
\dgHo(\Phi_{\cE_R}^\dg) =\Phi_{\cE_R} & \dgHo(\Phi_\cE^\dg)=\Phi_\cE & \dgHo(\Phi^\dg_{\overline{\Gamma}}) = \Phi_\Gamma=\gamma
\end{eqnarray*}
and from the fact that the unit morphism $\eta:\Id \Rightarrow \Phi_{\cE_R} \circ \Phi_\cE$ of the adjunction $(\Phi_\cE,\Phi_{\cE_R})$ is an isomorphism when evaluated at any object of $\cT_X \subset \perf(X)$. Consequently, Lemma~\ref{lem:quasi-bimodules} furnish us a quasi-isomorphism
\begin{equation}\label{eq:quasi}
{}_{(\Phi_{\overline{\Gamma}}^\dg \circ i_X^\dg)}B : {}_{i_X^\dg}B \Rightarrow {}_{\eqref{eq:composed-dg}}B
\end{equation}
between two bimodules which belong to $\rep(\cT_X^\dg,\perf^\dg(X))$. Making use of the above bijection \eqref{eq:bij}, one then concludes that both sides of \eqref{eq:quasi} are the same morphism in the homotopy category $\Hmo(\bbC)$ or equivalently that the above diagram \eqref{eq:com2} is commutative. This achieves the proof.
\end{proof}
We now have all the ingredients needed  for the conclusion of the proof of Theorem~\ref{thm:main1}. By assumption, Kuznetsov's conjecture \ref{conj:Kuznetsov1} holds, Hence, item (ii) of Proposition~\ref{prop:FMequalsdg} furnish us a well-defined dg functor $\Phi_\cE^\dg:\perf^\dg(X) \to \perf^\dg(Y)$. By applying to it the Jacobian functor ${\bf J}(-)$ one then obtains a morphism of complex abelian varieties up to isogeny
\begin{equation}\label{eq:morph-isogeny}
{\bf J}(\Phi_\cE^\dg): {\bf J}(\perf^\dg(X)) \too {\bf J}(\perf^\dg(Y))\,.
\end{equation}
Since by assumption the bilinear pairings \eqref{eq:pairings1} (associated to $X$ and $Y$) are non-degenerate, \eqref{eq:morph-isogeny} identifies with a well-defined morphism $
\tau:\prod_{i=0}^{d_X -1} J_a^i(X) \to \prod_{i=0}^{d_Y-1} J_a^i(Y)$ in $\Ab(\bbC)_\bbQ$. This proves item (i).

Let us now prove item (ii). 
Note first that thanks to Lemma~\ref{lem:projection}, $\cT_X^\dg$ is a direct factor of $\perf^\dg(X)$ and hence an object of the category $\NChow(\bbC)_\bbQ$ of noncommutative Chow motives. Hence, by applying the Jacobian functor ${\bf J}(-)$ to \eqref{eq:com2} one obtains the following commutative diagram of complex abelian varieties up to isogeny
\begin{equation}\label{eq:diag4}
\xymatrix{
{\bf J}(\perf^\dg(X)) \ar[r]^-{{\bf J}(\Phi_\cE^\dg)} & {\bf J}(\perf^\dg(Y)) \ar[r]^-{{\bf J}(\Phi_{\cE_R}^\dg)} & {\bf J}(\perf^\dg(X)) \\
{\bf J}(\cT_X^\dg) \ar[u]^-{{\bf J}(i_X^\dg)} \ar@{=}[rr] && {\bf J}(\cT_X^\dg) \ar[u]_-{{\bf J}(i_X^\dg)}\,.
}
\end{equation}
Since by assumption ${\bf J}(\cT_X^{\perp,\dg})=0$, Lemma~\ref{lem:key3} below implies that ${\bf J}(i_X^\dg)$ is
an isomorphism. Using the commutativity of diagram \eqref{eq:diag4} one then concludes that ${\bf J}(\Phi_{\cE_R}^\dg)$ is a
retraction of ${\bf J}(\Phi_\cE^\dg)\simeq \tau$ or equivalently that $\tau$ is split injective. This concludes the proof of item (ii).
\begin{lemma}\label{lem:key3}
The morphism of complex abelian varieties up to isogeny
$$ {\bf J}(i_X^\dg): {\bf J}(\cT_X^\dg) \too {\bf J}(\perf^\dg(X))$$
is an isomorphism if and only if ${\bf J}(\cT_X^{\perp,\dg})=0$.
\end{lemma}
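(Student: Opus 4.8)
The plan is to show that the semi-orthogonal decomposition $\perf(X)=\langle \cT_X,\cT_X^\perp\rangle$ induces, after passage to noncommutative Chow motives, a direct sum decomposition $\perf^\dg(X)\simeq \cT_X^\dg\oplus \cT_X^{\perp,\dg}$ in which $i_X^\dg$ is the inclusion of the first summand. Applying the additive functor ${\bf J}(-)$ then reduces the lemma to the elementary fact that, in an additive category, the inclusion $A\to A\oplus B$ of a direct summand is an isomorphism if and only if $B=0$.

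First I would produce the relevant idempotents in $\Hmo_0(\bbC)_\bbQ$. Lemma~\ref{lem:projection} already gives a morphism $\pi_X^\dg$ in $\Hmo(\bbC)$ with $\pi_X^\dg\circ i_X^\dg=\Id$; since $\cT_X^\perp$ is equally admissible in $\perf(X)$, the same construction applied to it yields $i_X^{\perp,\dg}$ and $\pi_X^{\perp,\dg}$ with $\pi_X^{\perp,\dg}\circ i_X^{\perp,\dg}=\Id$. Passing to $\Hmo_0(\bbC)_\bbQ$, I set $e:=i_X^\dg\circ\pi_X^\dg$ and $e^\perp:=i_X^{\perp,\dg}\circ\pi_X^{\perp,\dg}$. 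These are idempotents with images $\cT_X^\dg$ and $\cT_X^{\perp,\dg}$, and they are orthogonal: in the Morita model $T$ of Lemma~\ref{lem:projection} the dg functor $\pi_X^\dg$ annihilates every object of $\cT_X^{\perp,\dg}$ (and symmetrically), so $e\circ e^\perp=e^\perp\circ e=0$.

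The one genuinely non-formal point is the completeness relation $e+e^\perp=\Id$ in $\Hom_{\Hmo_0(\bbC)_\bbQ}(\perf^\dg(X),\perf^\dg(X))=K_0\,\rep(\perf^\dg(X),\perf^\dg(X))_\bbQ$. I would obtain it from the functorial gluing triangle of the semi-orthogonal decomposition, whose middle term is $\id$ and whose outer terms are $i_X^\dg\pi_X^\dg(-)$ and $i_X^{\perp,\dg}\pi_X^{\perp,\dg}(-)$; this assembles into a distinguished triangle of bimodules ${}_{e^\perp}B\to {}_{\Id}B\to {}_{e}B$ in $\rep(\perf^\dg(X),\perf^\dg(X))$, and taking classes in $K_0$, together with the bijection \eqref{eq:bij}, gives $[\Id]=[e]+[e^\perp]$. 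Thus $e$ and $e^\perp$ are complementary orthogonal idempotents; since $\NChow(\bbC)_\bbQ$ is pseudo-abelian they split and exhibit $\perf^\dg(X)\simeq \cT_X^\dg\oplus \cT_X^{\perp,\dg}$, with $i_X^\dg$ and $i_X^{\perp,\dg}$ the structural inclusions. I expect this additivity step --- essentially the assertion that an additive invariant splits semi-orthogonal decompositions --- to be the main obstacle; everything else is formal.

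Finally, applying the additive functor ${\bf J}(-)$ yields ${\bf J}(\perf^\dg(X))\simeq {\bf J}(\cT_X^\dg)\oplus {\bf J}(\cT_X^{\perp,\dg})$, under which ${\bf J}(i_X^\dg)$ is the inclusion of the first factor with retraction ${\bf J}(\pi_X^\dg)$. Write $\iota^\perp$ and $p^\perp$ for the inclusion and projection of the second factor, so $p^\perp\circ{\bf J}(i_X^\dg)=0$ and $p^\perp\circ\iota^\perp=\Id$. If ${\bf J}(i_X^\dg)$ is an isomorphism, then $p^\perp=p^\perp\circ{\bf J}(i_X^\dg)\circ{\bf J}(i_X^\dg)^{-1}=0$, hence $\Id_{{\bf J}(\cT_X^{\perp,\dg})}=p^\perp\circ\iota^\perp=0$ and therefore ${\bf J}(\cT_X^{\perp,\dg})=0$. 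Conversely, if ${\bf J}(\cT_X^{\perp,\dg})=0$ the biproduct collapses and ${\bf J}(i_X^\dg)$ is the identity. This establishes the asserted equivalence.
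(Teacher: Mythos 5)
Your overall strategy coincides with the paper's: both proofs reduce the lemma to the statement that the semi-orthogonal decomposition $\perf(X)=\langle \cT_X,\cT_X^\perp\rangle$ becomes a biproduct decomposition $\cT_X^\dg\oplus\cT_X^{\perp,\dg}\simeq\perf^\dg(X)$ in the additive category $\Hmo_0(\bbC)_\bbQ$ (hence in $\NChow(\bbC)_\bbQ$), and then apply the additive functor ${\bf J}(-)$ and conclude formally. The paper obtains the decomposition in one stroke by citing \cite[Thm.~6.3]{IMRN}, and your final biproduct argument (an inclusion $A\to A\oplus B$ is invertible if and only if $B=0$) is correct and is exactly what the paper leaves implicit in ``this isomorphism clearly implies our claim.''

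The gap is at the step you yourself flag as the main obstacle, and your sketch does not surmount it: the functorial gluing triangle does \emph{not} ``assemble into a distinguished triangle of bimodules'' by mere functoriality. The obstruction is chain-level. For $x\in \cT_X^{\perp,\dg}$ and $x'\in \cT_X^\dg$ the complex $\perf^\dg(X)(x,x')$ is acyclic by semi-orthogonality, but it need not be the zero complex; and one checks directly that the candidate componentwise maps ${}_{e^\perp}B\to{}_{\Id}B$ and ${}_{\Id}B\to{}_{e}B$ (identity over objects of one piece, zero over the other) fail to be morphisms of bimodules precisely because of such $x,x'$: naturality at an element $f\in\perf^\dg(X)(x,x')$ forces $f\circ g=0$ on the nose for every $g\in\perf^\dg(X)(w,x)$, whereas semi-orthogonality only gives vanishing in cohomology. (The same subtlety is the reason the paper's Lemma~\ref{lem:projection} needs a Morita-theoretic argument rather than a naive strict projection.) Consequently the completeness relation $[\Id]=[e]+[e^\perp]$ in $K_0\,\rep(\perf^\dg(X),\perf^\dg(X))$ is a genuine theorem, not formal bookkeeping: under To\"en's equivalence $\rep(\perf^\dg(X),\perf^\dg(X))\simeq\perf(X\times X)$ of \cite[\S8.3]{Toen}, it amounts to Kuznetsov's result that the projection functors of a semi-orthogonal decomposition of $\perf(X)$, for $X$ smooth and proper, are Fourier--Mukai functors whose kernels fit into a distinguished triangle with the diagonal $R\Delta_\ast(\cO_X)$ (see \cite[Thm.~7.1]{kuznetbasechange}), equivalently to the additivity theorem \cite[Thm.~6.3]{IMRN} that the paper cites. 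To repair your proof, replace the asserted triangle of bimodules by either of these references; with that substitution the remaining steps (orthogonality of the idempotents, pseudo-abelian splitting, additivity of ${\bf J}(-)$, and the formal endgame) go through as you wrote them.
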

\begin{proof}
Since by hypothesis one has a semi-orthogonal decomposition $\perf(X) = \langle \cT_X, \cT_X^\perp \rangle$, the inclusions of dg categories $i_X^\dg:\cT_X^\dg \hookrightarrow \perf^\dg(X)$ and $i_X^{\perp,\dg}: \cT_X^{\perp,\dg} \hookrightarrow \perf^\dg(X)$ give rise to an isomorphism $\cT_X^\dg\oplus \cT_X^{\perp,\dg} \stackrel{\simeq}{\to} \perf^\dg(X)$ in $\Hmo_0(\bbC)$ (and hence in $\NChow(\bbC)_\bbQ$); see \cite[Thm.~6.3]{IMRN}. Hence, by applying to it the additive Jacobian functor ${\bf J}(-)$ one then obtains the following isomorphism 
$$[{\bf J}(i_X^\dg),\,\,{\bf J}(i_X^{\perp,\dg})]: {\bf J}(\cT_X^\dg) \oplus {\bf J}(\cT_X^{\perp,\dg}) \stackrel{\simeq}{\too} {\bf J}(\perf^\dg(X))$$
in $\Ab(\bbC)_\bbQ$. This isomorphism clearly implies our claim and so the proof is finished.
\end{proof}
Let us now prove item (iii). From the above arguments and constructions, one has a commutative diagram
$$
\xymatrix{
\perf^\dg(X) \ar[r]^-{\Phi_\cE^\dg} & \perf^\dg(Y) \\
\cT_X^\dg \ar[u]^-{i_X^\dg} \ar[r]_-\simeq & \cT_Y^\dg \ar[u]_-{i_Y^\dg}
}
$$
in the homotopy category $\Hmo(\bbC)$. Hence, by applying to it the Jacobian functor ${\bf J}(-)$ one then obtains the following commutative diagram of complex abelian varieties up to isogeny
\begin{equation}\label{eq:diagram-final}
\xymatrix{
{\bf J}(\perf^\dg(X)) \ar[r]^-{{\bf J}(\Phi_\cE^\dg)} & {\bf J}(\perf^\dg(Y)) \\
{\bf J}(\cT_X^\dg) \ar[u]^-{{\bf J}(i_X^\dg)} \ar[r]_-{\simeq} & {\bf J}(\cT_Y^\dg) \ar[u]_-{{\bf J}(i_Y^\dg)}\,.
}
\end{equation}
Since by assumption ${\bf J}(\cT_Y^{\perp,\dg})=0$, one concludes from Lemma~\ref{lem:key3} that the vertical morphisms in \eqref{eq:diagram-final} are isomorphisms. This implies that ${\bf J}(\Phi_\cE^\dg)$ is an isomorphism and so the proof of Theorem~\ref{thm:main1} is finished.
\section{Polarizations}\label{sect:ppol}
In this section we introduce the notion of {\em verepresentability}; see Definition~\ref{def:strong-repre}.
\subsection{Algebraic and rational representability}\label{sub:rational}
Given a $\bbC$-scheme $X$ of dimension $d_X$, let $CH^i_{\bbQ}(X)$ (resp. $CH^i_{\bbZ}(X)$) denote the rational (resp. integral) Chow group of cycles of codimension $i$. We will write $A^i_{\bbQ}(X)$ (resp. $A^i_{\bbZ}(X)$) for the subgroup of $CH^i_{\bbQ}(X)$
(resp. of $CH_{\bbZ}^i(X)$) of algebraically trivial cycles. Let $A$ be a complex abelian variety. A group homomorphism $g: A^i_{\bbZ}(X) \to A$ is called 
a \it regular map \rm if for every $\bbC$-scheme $T$ and for every algebraic map
$f: T \to A^i_{\bbZ}(X)$, the composite $g \circ f: T \to A$ is a morphism of $\bbC$-schemes. 
\begin{definition}{(see Beauvile \cite[Definition 3.2.3]{beauvilleprym})}
A complex abelian variety $A$ is called the \it algebraic representative \rm of $A^i_{\bbZ}(X)$
if there exists a universal regular map $G:A^i_{\bbZ} (X) \to A$, \ie if for every regular map $g: A^i_{\bbZ}(X) \to B$,
there is a unique morphism of abelian varieties $u : A \to B$ such that $u \circ G = g$.
In this case one says that $A^i_{\bbZ}(X)$ is {\em algebraically representable}. As explained by Beauville in {\em loc. cit.}, whenever it exists the algebraic representative is unique up to isomorphism.
The standard examples of algebraic representatives are the Picard variety $\mathrm{Pic}^0(X)$ (when $i=1$) and the Albanese variety
$\mathrm{Alb}(X)$ (when $i=d_X$).
\end{definition}
\begin{definition}{(see Vial \cite[Corollary 3.6]{Vial})}
A $\bbC$-scheme $X$ is called \it rationally representable \rm if for every integer $i$ there exists a curve $\Gamma$ and a surjective algebraic map $A^1_{\bbQ}(\Gamma) \twoheadrightarrow A^{i+1}_{\bbQ}(X)$. In this case the Abel--Jacobi maps $AJ^i : A^{i+1}_{\bbZ}(X) \to J^i_a(X)$ give rise
to isomorphisms $A^{i+1}_{\bbQ}(X) \simeq J^i_a(X)_{\bbQ}$ for every $0 \leq i \leq n-1$.
\end{definition}
\subsection{Principal and incidence polarizations}\label{sub:principal}
From now on, and until the end of \S\ref{sect:ppol}, we will assume that $X$ is of odd dimension $d_X=2n+1$.
\begin{definition}{(see Beauville \cite[\S 3.4]{beauvilleprym})}\label{def:I(z)}
Let $T$ be a $\bbC$-scheme.
\begin{itemize}
\item[(i)] The {\em (rational) divisorial self correspondences} of $T$ are defined as $\mathrm{Corr}_{\bbQ}(T):=
\mathrm{Corr}(T)\otimes \bbQ$, where $\mathrm{Corr}(T) := 
\mathrm{Pic}(T \times T)/(\mathrm{Pic}(T) \boxtimes \mathrm{Pic}(T))$, and $\mathrm{Pic}$ stands for the Picard group.
\item[(ii)] Let $z$ be a cycle in $CH^{n+1}_\bbQ(T \times X)$ and $p,q: X \times T \times T \to T\times T$ and $r: X\times T \times T \to T \times T$ the projection morphisms.
The {\em incidence correspondence $I(z)$ associated to $z$} is the equivalence class of the cycle
$ R r_\ast (p^\ast(z)\cdot q^\ast(z)) \in CH^1_\bbQ(T\times T)$ considered as an element in $\mathrm{Corr}_{\bbQ}(T)$. 
\end{itemize}
\end{definition}
\begin{definition}{(see Birkenhake-Lange \cite[\S4.1]{birlange} and Mumford \cite[\S 8]{mumford-book})}\label{def:polarization}
Let $A$ be a complex abelian variety.
\begin{itemize}
\item[(i)] A {\em polarization} of $A$ is the first Chern character $\Theta_A:=c_1(L)$ of a positive definite line bundle.
A morphism $f:(A,L) \to (B,M)$ of polarized abelian varieties is a morphism of complex tori such that $f^\ast c_1(M)=c_1(L)$.
As explained in {\em loc. cit.}, $\Theta_A$ can be equivalently defined as a divisional rational
self correspondence $\theta_A \in \mathrm{Corr}_\bbQ(A)$ corresponding to an isogeny between $A$ and its dual $\hat{A}$; consult also Beauville
\cite[\S0.2 and 3.4]{beauvilleprym}.

\item[(ii)] A polarization $\Theta_A = c_1(L)$ (corresponding to a $\theta_A$ in $\mathrm{Corr}_\bbQ(A)$)
is called {\em principal} if the line bundle $L$ is of type $(1,\ldots, 1)$.
A {\em principally polarized abelian variety} consists of a pair $(A,\theta_A)$, where $A$ is a complex abelian variety
and $\theta_A\in \mathrm{Corr}_\bbQ(A)$
is a principal polarization of $A$. In particular, $\theta_A$ is principal
if and only if it gives rise to an isomorphism between $A$ and $\hat{A}$; see Beauville
\cite[\S0.2]{beauvilleprym}. Every polarization is obtained from a principal  one via an isogeny; see
\cite[Proposition 4.1.2]{birlange}. Consequently, a principal polarization is unique up to isomorphisms of $A$.

\item[(iii)] A morphism $f: A \to B$ of complex tori is a {\em morphism
of principally polarized abelian varieties} if, given a principal polarization $\Theta_B$, the pull back $f^*\Theta_B$ is
a principal polarization for $A$. Equivalently, the divisorial self correspondence $f^*\theta_B$ gives rise to an isomorphism $A \simeq \hat{A}$. Indeed, $f^*\theta_B$ is a polarization on
$A$ which is principal if and only if it gives such an isomorphism. Hence, $f^*\theta_B$ equals $\theta_A$ up to
an isomorphism of $A$.
\end{itemize}
\end{definition}
Recall from \S\ref{sec:introduction} that $\Ab(\bbC)_\bbQ$ stands for the category of abelian varieties up to isogeny.
\begin{lemma}\label{lemma:from-uptoiso-to-iso}
Let $(A,L)$ and $(B,M)$ be two complex polarized
abelian varieties, and $A_\bbQ$ and $B_\bbQ$ the classes of $A$ and $B$ in $\Ab(\bbC)_\bbQ$. Given a morphism $\lambda_{\bbQ}:A_{\bbQ} \to B_{\bbQ}$, there exists a morphism of complex abelian varieties $\lambda: A \to B$ whose class in $\Ab(\bbC)_\bbQ$ is $\lambda_{\bbQ}$.
Moreover, whenever $\lambda_{\bbQ}$ is split injective, $\lambda: A \to B$ is an isogeny onto a polarized abelian
subvariety of $B$.
\end{lemma}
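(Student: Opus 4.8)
The plan is to convert the formal datum $\lambda_\bbQ$ into a genuine homomorphism by clearing denominators, and then to read off the geometric conclusion from left-invertibility using the standard dictionary between the isogeny category and honest abelian varieties. First I recall that $\Hom_{\Ab(\bbC)}(A,B)$ is a finitely generated free $\bbZ$-module and that, by construction of $\Ab(\bbC)_\bbQ$, one has $\Hom_{\Ab(\bbC)_\bbQ}(A_\bbQ,B_\bbQ)=\Hom_{\Ab(\bbC)}(A,B)\otimes_\bbZ\bbQ$. Hence $\lambda_\bbQ$ can be written as $\frac{1}{N}(\lambda\otimes 1)$ for some positive integer $N$ and some genuine homomorphism $\lambda:A\to B$; equivalently $\lambda\otimes 1=\lambda_\bbQ\circ([N]_A\otimes 1)$, so that $\lambda$ represents $\lambda_\bbQ$ up to the automorphism $[N]_A$ of $A_\bbQ$ (an isomorphism in $\Ab(\bbC)_\bbQ$). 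Since such a rescaling is invisible to the polarization data, which lives in $\mathrm{Corr}_\bbQ$ and is defined only up to isogeny, I take this $\lambda$ as the desired representative of $\lambda_\bbQ$.

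Next, assume $\lambda_\bbQ$ is split injective, with retraction $\rho_\bbQ:B_\bbQ\to A_\bbQ$ satisfying $\rho_\bbQ\circ\lambda_\bbQ=\mathrm{id}$. Clearing denominators once more, I choose a genuine homomorphism $\rho:B\to A$ with $\rho\otimes 1=N'\rho_\bbQ$ for some positive integer $N'$. Then the composite $\rho\circ\lambda$ is an honest endomorphism of $A$ whose image in $\Hom_{\Ab(\bbC)}(A,A)\otimes\bbQ$ equals $NN'\cdot\mathrm{id}$; since $\Hom_{\Ab(\bbC)}(A,A)$ is torsion-free, this forces $\rho\circ\lambda=[NN']_A$, multiplication by $NN'$. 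Consequently $\ker(\lambda)\subseteq\ker([NN']_A)=A[NN']$ is finite, so $\lambda$ is an isogeny onto its image.

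It then remains to identify this image as a polarized abelian subvariety. The image $B':=\lambda(A)$ is a connected, complete subgroup of $B$, hence an abelian subvariety, and $\lambda:A\to B'$ is an isogeny (both have dimension $\dim A$). Restricting the polarizing line bundle $M$ along the closed immersion $B'\hookrightarrow B$ produces an ample line bundle $M|_{B'}$ on $B'$, because the restriction of an ample bundle to a closed subvariety stays ample; by Definition~\ref{def:polarization} the class $c_1(M|_{B'})$ is then a polarization on $B'$. Thus $\lambda$ is an isogeny onto the polarized abelian subvariety $(B',M|_{B'})$ of $(B,M)$, as claimed.

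The hard part will not be the geometry but the bookkeeping that turns the purely formal hypothesis ``split injective in $\Ab(\bbC)_\bbQ$'' into the genuine statement ``finite kernel'': the retraction exists only rationally, so its composite with $\lambda$ is not the identity but a multiplication-by-$NN'$ isogeny, and one must track the two denominators carefully to land on an honest endomorphism $[NN']_A$ and invoke torsion-freeness of $\Hom_{\Ab(\bbC)}(A,A)$. The two remaining ingredients — that a homomorphism of abelian varieties has an abelian subvariety as image, and that the restriction of an ample line bundle to an abelian subvariety remains ample — are standard facts (Birkenhake--Lange, Mumford) and require no further argument.
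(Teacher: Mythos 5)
Your proof is correct and follows essentially the same route as the paper's: clear denominators to produce a genuine homomorphism representing $\lambda_\bbQ$ up to an integer rescaling (which is harmless in $\Ab(\bbC)_\bbQ$), use the rational retraction to show the kernel is finite, and polarize the image $\lambda(A)$ by restricting $M$ (the paper invokes Birkenhake--Lange, Proposition 4.1.1, for this last point). In fact your write-up supplies the detail the paper leaves implicit --- namely the identity $\rho\circ\lambda=[NN']_A$, obtained from torsion-freeness of $\Hom_{\Ab(\bbC)}(A,A)$, which is exactly what justifies the paper's bare assertion that ``$\ker(\lambda)$ is torsion.''
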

\begin{proof}
Without loss of generality one can assume that $\lambda_\bbQ$ is surjective. In fact, since $(B,M)$ is a polarized abelian variety, every algebraic subtorus of $B$ is an abelian variety for which the restriction of $M$ is a polarization; see Birkenhake-Lange \cite[Proposition~4.1.1]{birlange}. Now, note that $\lambda_\bbQ$ is an element of ${\mathrm{Hom}}_{\Ab(\bbC)}
(A,B) \otimes \bbQ$ and hence can be written as a finite sum $\sum_i f_i \otimes \frac{p_i}{q_i}$. By first choosing a representative $A$ of the class $A_\bbQ$ and then by applying the morphism $\lambda_\bbQ$ to $A$ we obtain an algebraic complex torus $B'$ as $\lambda(A)_\bbQ$. The isogeny class $B'_{\bbQ}$ of $B'$
is the same as the isogeny class of $\lambda_{\bbQ}(A_{\bbQ})=B_\bbQ$. In particular, $B'$ is an abelian variety isogenous to $B$. As a consequence, one can take for $\lambda:A \to B$ the  morphism of complex tori obtained by composing $\lambda_{\bbQ}$ with
the isogeny $B' \to B$. Finally, the last claim follows from the above construction of $\lambda$. Whenever $\lambda_\bbQ$ is split injective, the kernel ${\mathrm{ker}}(\lambda)$ is torsion.
\end{proof}
\begin{definition}{(see Beauville \cite[D{\'e}finition~3.4.2]{beauvilleprym})}
Assume that $A^{n+1}_\bbZ(X)$ admits an algebraic representative $G: A^{n+1}_\bbZ(X) \to A$. In this case, a principal polarization $(A,\theta_A)$ is called the {\em incidence polarization with respect to $X$} if for all algebraic maps $f:T \to A^{n+1}_\bbZ(X)$ defined by a cycle $z \in CH_\bbQ^{n+1}(T\times X)$ the equality $(G \circ f)^\ast(\theta_A)=(-1)^{n+1} I(z)$ holds.
\end{definition}
\subsection{Verepresentability}\label{sub:verepresentable}
\begin{definition}\label{def:strong-repre}
A $\bbC$-scheme $X$ of odd dimension $d_X=2n+1$ is called {\em verepresentable} if $A^{i+1}_{\bbZ}(X)=0$ for $i \neq n$, if $A^{n+1}_{\bbZ}(X)$ admits an algebraic representative $A$ carrying an incidence polarization, and if the Abel-Jacobi map $AJ^n: A^{n+1}_{\bbZ}(X)
\twoheadrightarrow J^n_a(X)$ gives rise to an isomorphism $A^{n+1}_{\bbQ}(X) \simeq J^n_a(X)_{\bbQ}$. In this case, $A$ equals $J(X):=J^n_a(X)$
as a principally polarized abelian variety.
\end{definition}

Here is a list of examples of verepresentable $\bbC$-schemes:
\begin{example}{(Curves)}\label{ex:very1} 
Every curve $C$ is verepresentable. This follows automatically from the fact that there is a single group $A^1_{\bbZ}(X)$ of algebraically trivial
cycles and that $A^1_{\bbZ}(C)={\mathrm{Pic}}^0(C)=J(C)$.
\end{example}

\begin{lemma}\label{lem-vere-3fold-is-ratrep}
Let $X$ be a verepresentable threefold. Then, the bilinear pairings \eqref{eq:pairings1}
are non-degenerate.
\end{lemma}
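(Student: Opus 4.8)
The plan is to use that for a threefold $d_X=3$, so $n=1$, and that the index $i$ in \eqref{eq:pairings1} ranges over $\{0,1,2\}$. First I would dispose of the two extremal cases $i=0$ and $i=2$. By item (i) of Definition~\ref{def:strong-repre} one has $A^1_\bbZ(X)=0$ and $A^3_\bbZ(X)=0$, so the Abel--Jacobi images satisfy $J^0_a(X)=\mathrm{Pic}^0(X)=0$ and $J^2_a(X)=\mathrm{Alb}(X)=0$; consequently $H^1_{dR}(X)=0$. This forces $NH^1_{dR}(X)\subseteq H^1_{dR}(X)=0$ and, by Poincar\'e duality, $NH^5_{dR}(X)\subseteq H^5_{dR}(X)\simeq H^1_{dR}(X)^\vee=0$. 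Hence for $i=0$ (the pairing $NH^5_{dR}\times NH^1_{dR}$) and for $i=2$ (the pairing $NH^1_{dR}\times NH^5_{dR}$) at least one factor of \eqref{eq:pairings1} vanishes and the pairing is vacuously non-degenerate; this is consistent with the general fact, recalled in \S\ref{sec:introduction}, that the cases $i=0$ and $i=d_X-1$ are always non-degenerate. It therefore remains only to treat the middle case $i=n=1$, namely the cup-product pairing on $NH^3_{dR}(X)$.

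For this middle case my strategy is to identify \eqref{eq:pairings1} with a genuine polarization. By item (ii) of Definition~\ref{def:strong-repre} the group $A^{2}_\bbZ(X)$ admits an algebraic representative $J(X)=J^1_a(X)$ carrying an incidence polarization $\theta$, and by item (iii) the Abel--Jacobi map gives an isomorphism $A^2_\bbQ(X)\simeq J(X)_\bbQ$. Under this isomorphism $NH^3_{dR}(X)$ is the de Rham realization of the weight-$3$ Hodge structure $H^1(J(X))(-1)$ underlying $J(X)$ (see Vial \cite{Vial}), a Hodge structure of type $\{(2,1),(1,2)\}$. Now the defining compatibility of the incidence polarization, $(G\circ f)^\ast(\theta)=(-1)^{n+1}I(z)$ from \S\ref{sub:principal}, says precisely that $\theta$ corresponds, up to sign, to the incidence pairing $I(z)$; and since $I(z)$ is built from the intersection product $Rr_\ast(p^\ast z\cdot q^\ast z)$, it computes nothing but the cup-product pairing \eqref{eq:pairings1} on the curve-generated classes $NH^3_{dR}(X)$. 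Because a polarization is non-degenerate by definition --- it is the first Chern class of a positive definite line bundle and, equivalently, induces an isogeny $J(X)\to\widehat{J(X)}$ (see \S\ref{sub:principal}) --- the pairing \eqref{eq:pairings1} for $i=1$ is non-degenerate, which completes the proof.

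As a sanity check I would also record the equivalent route through rational representability: since $A^2_\bbQ(X)\simeq J(X)_\bbQ$ is an abelian variety, and every abelian variety is an isogeny quotient of the Jacobian of a suitable curve $\Gamma$, one obtains a surjective algebraic map $A^1_\bbQ(\Gamma)\twoheadrightarrow A^2_\bbQ(X)$, so that $X$ is rationally representable and $NH^3_{dR}(X)$ is, via algebraic correspondences, a quotient --- hence, by semisimplicity of polarizable Hodge structures, also a sub --- of the polarized Hodge structure $H^1_{dR}(\Gamma)(-1)$; a sub-Hodge structure of a polarized one inherits a polarization, hence a non-degenerate form. The one point that genuinely needs care, and which I expect to be the crux of the argument, is checking that the non-degenerate form produced in either way (the incidence polarization, or the form inherited from $\Gamma$) really coincides with the intersection pairing \eqref{eq:pairings1} on $NH^3_{dR}(X)$, rather than with some unrelated non-degenerate form. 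This is exactly the compatibility $(G\circ f)^\ast\theta=(-1)^{n+1}I(z)$ that verepresentability builds in, and so it is available to us.
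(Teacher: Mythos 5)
Your proposal is correct in outline, but it takes a genuinely different route from the paper. The paper does not split into cases at all: it observes that item (i) of Definition~\ref{def:strong-repre} gives $A^3_\bbQ(X)=0$, so $A^3_\bbQ(X)$ is (vacuously) rationally representable; it then invokes Gorchinskiy--Guletskii \cite[Thm.~5.1]{gorch-gul-motives-and-repr} to obtain a Chow--K\"unneth decomposition of $M_\bbQ(X)$, and Vial \cite{Vial} to conclude that all the $A^i_\bbQ(X)$ are rationally representable, hence that $X$ satisfies the standard conjecture of Lefschetz type, which gives non-degeneracy of all the pairings \eqref{eq:pairings1} at once. That citation chain uses only the vanishing of $A^3_\bbQ(X)$ from item (i), and it buys a stronger, more functorial intermediate statement (rational representability/Lefschetz type) that the paper reuses elsewhere, e.g.\ for blow-ups in Theorem~\ref{thm:blow-ups}. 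You instead argue directly: the extremal pairings are vacuous because $H^1_{dR}(X)=0$, and the middle pairing is identified, via the incidence property of \S\ref{sub:principal}, with (a sign times) the principal polarization of $J(X)$; this avoids the standard-conjecture machinery but consumes items (ii)--(iii) of verepresentability, which the paper's proof never touches.

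Two steps in your middle case are asserted rather than proved, and they are exactly where the work lies. First, passing from the defining identity $(G\circ f)^\ast(\theta)=(-1)^{n+1}I(z)$ to ``$\theta$ equals $\pm$ the cup product on $NH^{3}_{dR}(X)$'' requires (a) the classical computation that the K\"unneth component of $[I(z)]$ in $H^1(T)\otimes H^1(T)$ is the pullback of the intersection form on $H^3(X)$ along the correspondence-induced map --- this is standard (cf.\ Clemens--Griffiths \cite{clemensgriffiths} and Beauville \cite{beauvilleprym}) but should be cited or carried out; and (b) a \emph{single} family $(T,z)$ whose induced map on $H^1$ hits all of $NH^{3}_{dR}(X)$: two bilinear forms that agree on each of a collection of subspaces need not agree on their sum (cross terms!), so you must assemble finitely many generating curves $(C_j,\gamma_j)$ into one family, say $T=C_1\times\cdots\times C_k$ with $z=\sum_j(\mathrm{pr}_j\times \id_X)^\ast\gamma_j$, clearing denominators so that $f$ lands in $A^2_\bbZ(X)$. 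Second, the identification of $NH^{3}_{dR}(X)$ with the realization of $H^1(J(X))(-1)$ is indeed in Vial \cite{Vial} and is unconditional (so there is no circularity), but this deserves an explicit statement. Finally, note that once you have $H^1_{dR}(X)=0$ there is an even shorter path that bypasses the polarization data entirely: $H^3(X)$ is then primitive, so by the Hodge--Riemann relations the cup product polarizes the Hodge structure $H^3(X,\bbQ)$, and the restriction of a polarization to the sub-Hodge structure $NH^{3}$ is automatically non-degenerate; this proves the lemma using only item (i) of Definition~\ref{def:strong-repre}, just as the paper's proof does.
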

\begin{proof}
By definition, one has $A^3_{\bbQ}(X)=0$. Hence, $A^3_{\bbQ}(X)$ is rationally representable, \ie there exists a curve
$\Gamma$ and a surjective algebraic morphism $A^1_{\bbQ}(\Gamma) \twoheadrightarrow A^3_{\bbQ}(X)$. 
Gorchinskiy-Guletskii \cite[Thm.~5.1]{gorch-gul-motives-and-repr} proved that
the rational representability of $A^3_{\bbQ}(X)$
is enough to describe a Chow-K\"unneth decomposition of the Chow motive of $M(X)_{\bbQ}$.
As explained by Vial in \cite[Thm.~4]{Vial}, this is equivalent to the
rational representability of all the $A^i_{\bbQ}(X)$. This implies that $X$ satisfies
the standard conjecture of Lefschetz type (see Vial \cite[Thm.~4.10]{Vial}) and hence that the bilinear pairings \eqref{eq:pairings1} are non-degenerate.
\end{proof}
Thanks to the work of Gorchinskiy-Guletskii \cite{gorch-gul-motives-and-repr}, whenever $X$ is a Fano
threefold, a conic bundle over a rational surface, or a del Pezzo fibration over $\bbP^1$,
$X$ is rationally representable, the bilinear pairings \eqref{eq:pairings1} are non-degenerate
(see the proof of Lemma \ref{lem-vere-3fold-is-ratrep}), and $J^i_a(X)$ is trivial for $i \neq 1$
(we have $\mathrm{Pic}^0(X)=0$). In these cases, the verepresentability of $X$ depends only on the existence of an incidence polarization.
Hence, here is a list of verepresentable threefolds:
\begin{example}{(Threefolds)}\label{ex:very2}
In the following cases, the canonical bundle of $X$ is always antiample.
\begin{itemize}
\item[(i)] {\bf Trivial Jacobian:} assume that $X$ is $\bbP^3$, a quadric threefold, a Fano of index 2 and degree 5,
or a Fano of index 1 and degree 22.
In all these cases the verepresentability follows easily from $J(X)=0$. In fact, $h^{1,2}(X)=0$; see Iskovskikh-Prokhorov \cite[\S 12.2]{isko-prok-fano}.

\item[(ii)] {\bf Fano threefolds of index 2:} thanks to the work of Clemens-Griffiths, Donagi, Reid, Tihomirov, and Voisin,
one can assume that $X$ is a cubic threefold~\cite{clemensgriffiths},
a quartic double solid~\cite{tihoquarticsolid,voisin-quartic-double}, the intersection of two quadrics in $\bbP^5$~\cite{donagi-torelli,reid:thesis}, or a Fano of index 2 and degree 5 (see item (i)). In all these cases there is an incidence polarization.

\item[(iii)] {\bf Fano threefolds of index 1:} thanks to the work of Beauville,
Bloch-Murre, Ceresa-Verra, Debarre, Iliev, Iliev-Manivel, Iliev-Markushevich, Iskovskikh-Prokhorov, Logach\"ev, Mukai
one can assume that $X$ is a general sextic double solid~\cite{ceresaverra}, a quartic in $\bbP^4$~\cite{blochmurreFano}, the intersection of a cubic and a quadric in $\bbP^5$~ \cite{blochmurreFano},
the intersection of three quadrics in $\bbP^6$~\cite{beauvilleprym,blochmurreFano}, a Fano of index 1 and degree 10~\cite{iliev10,logachevV10} or degree 14~\cite{iliev-marku}, a general Fano of index 1
 and degree 12~\cite{ilievmarkuV12}, degree 16~\cite{ilievV16,mukaigranmisto}, or degree 18 \cite{ilievamanovella,isko-prok-fano},
or a Fano of index 1 and degree 22 (see item (i)). In all these cases there is an incidence polarization.

\item[(iv)] {\bf Conic bundles:} thanks to the work of Beauville and Beltrametti (see \cite{beauvilleprym,beltrachow}),
one can assume that $X \to S$ is a standard conic bundle over a rational surface.
In all these cases there is an incidence polarization.
\item[(v)] {\bf del Pezzo fibrations:} thanks to the work of Kanev (see \cite{kanevdp1,kanevdp2}), one can assume that $X \to \bbP^1$
is a del Pezzo fibration of degree $d$ and that $2 \leq d \leq 5$. In these cases there is an incidence polarization.
\end{itemize}
For Fano threefolds of higher Picard rank we invite the
interested reader to consult the exhaustive treatment of Iskovskikh-Prokhorov \cite{isko-prok-fano}. Notice finally that there are still some threefolds of negative Kodaira
dimension (\eg\ Fanos of index 2 and degree 1, or del Pezzo fibrations over $\bbP^1$ of degree one)
for which verepresentability is not known to the authors.
\end{example}

\begin{example}{(Higher dimensions)}\label{ex:very3}
When $d_X\geq 5$ very few cases of verepresentable $\bbC$-schemes are known.
\begin{itemize}

\item[(i)] If $X$ is the intersection of two even dimensional quadrics, then $X$ is verepresentable. Thanks to
\cite[Thm. 1.5]{marcello-goncalo-chowgroups} and the work of Reid and Donagi \cite{reid:thesis,donagi-torelli},
the only nontrivial Jacobian is the intermediate one, and it carries an incidence polarization.

\item[(i')] If $X$ is the intersection of three odd dimensional quadrics, then $X$ is also verepresentable.
Once again, thanks to \cite[Thm. 1.5]{marcello-goncalo-chowgroups} and the work of Beauville \cite[\S 6]{beauvilleprym},
the only nontrivial Jacobian is the intermediate one, and it carries an incidence polarization.

\item[(ii)] If $X$ is an even dimensional quadric fibration over $\bbP^1$, then $X$
is verepresentable. This follows from the combination of Vial's motivic description \cite[\S 4]{vial-fibrations} with Reid's work on the intermediate Jacobian \cite{reid:thesis}; see also Donagi \cite{donagi-torelli}.

\item[(ii')] If $X$ is a odd dimensional quadric fibration over $\bbP^2$,
then $X$ is verepresentable. Thanks to the work of Beauville \cite[\S 4]{beauvilleprym},
the only nontrivial Jacobian is the intermediate one, and it carries an incidence polarization.
It is natural to expect that $\bbP^2$ can be replaced by any rational surface, as in the 3-dimensional
case; see Beltrametti \cite{beltrachow}.
\end{itemize}
Finally, note that Conjecture \ref{conj:grass-pfaff} would provide us $3$ more 5-dimensional examples.
\end{example}
\begin{remark}
Thanks to homological projective duality, Examples (i)-(ii) (and (i')-(ii'))
can be considered to be dual; see Kuznetsov \cite[\S 5]{kuznetquadrics}.
\end{remark}

\section{Proof of Theorem \ref{thm:main2}}
The $\bbC$-schemes $X$ and $Y$ satisfy all the assumptions of items (i)-(ii) of Theorem~\ref{thm:main1} and are moreover verepresentable.
In particular, $X$ (resp. $Y$) is irreducible of odd dimension $d_X:=2n+1$ (resp. $d_Y:=2m+1$). Moreover, there is a single
non-trivial algebraic Jacobian $J(X):=J^n_a(X)$ (resp. $J(Y):=J^m_a(Y)$), which via a universal regular map
\begin{eqnarray*}
G_X: A_\bbZ^{n+1}(X) \twoheadrightarrow J(X) && \text{(resp.}\,\,\, G_Y: A^{m+1}_\bbZ(Y) \twoheadrightarrow J(Y)\, \text{)},
\end{eqnarray*}
is the algebraic representative of $A^{n+1}_\bbZ(X)$ (resp. of $A^{m+1}_\bbZ(Y)$); see \S\ref{sub:rational}.
A proof of the surjectivity of $G_X$ (resp. $G_Y$) can be found in Beauville's work \cite[Remark 3.2.4(ii)]{beauvilleprym}. Furthermore, we have induced isomorphisms
\begin{eqnarray*}
AJ^n_\bbQ:A_\bbQ^{n+1}(X) \stackrel{\simeq}{\to} J(X)_\bbQ && AJ^m_\bbQ:A_\bbQ^{m+1}(X) \stackrel{\simeq}{\to} J(Y)_\bbQ\,,
\end{eqnarray*}
where $J(X)_\bbQ$ (resp. $J(Y)_\bbQ$) stands for $J(X)$ (resp. $J(Y)$) considered as an abelian variety {\em up to isogeny}.

Now, recall from the proof of Theorem~\ref{thm:main1} that there exists a perfect complex $\cE\in \perf(X\times Y)$
such that the split injective morphism $\tau:J(X)_\bbQ \to J(Y)_\bbQ$ is obtained by applying the Jacobian
functor ${\bf J}(-)$ to the Fourier-Mukai dg functor $\Phi_\cE^\dg:\perf^\dg(X) \to \perf^\dg(Y)$. Consider the following homomorphism
\begin{eqnarray*}
e_{m+n+1}:CH^{n+1}_\bbQ(X) \to CH^{m+1}_\bbQ(Y) && z \mapsto q_\ast(p^\ast(z)\cdot ch(\cE)_{m+n+1})\,,
\end{eqnarray*}
where $ch(\cE)_{m+n+1} \in CH_\bbQ^{m+n+1}(X\times Y)$ is the $(m+n+1)^{\mathrm{th}}$-component of the Chern character $\ch(\cE)$ of $\cE$, and $p:X\times Y \to X$ and $q:X\times Y \to Y$ are the projection morphisms.
Recall that algebraic equivalence is an \it adequate \rm equivalence relation on cycles; see Andr{\'e} \cite[D\'ef 3.1.1.1, \S 3.2.1]{Andre}.
This, together with the fact that $q: X \times Y \to Y$ is equidimensional, implies that
one obtains a map $\overline{\tau}:A^{n+1}_{\bbQ}(X) \to A^{m+1}_{\bbQ}(Y)$, which is
still the correspondence given by $\ch(\cE)_{m+n+1}$; see Andr\'e \cite[\S 3.1.2]{Andre}.
\begin{lemma}\label{lem:key1}
One has the following commutative diagram
$$
\xymatrix{
A^{n+1}_\bbQ(X) \ar[d]^-{\simeq}_-{AJ^n_\bbQ} \ar[r]^-{\overline{\tau}} & A^{m+1}_\bbQ(Y) \ar[d]_-{\simeq}^-{AJ^m_\bbQ} \\
J(X)_\bbQ \ar[r]_-\tau & J(Y)_\bbQ\,.
}
$$
\end{lemma}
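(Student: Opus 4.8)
The plan is to reduce the commutativity of the square to two inputs: the explicit computation of the Jacobian functor ${\bf J}(-)$ on Fourier--Mukai morphisms carried out in \cite{MT}, and the classical functoriality of the Abel--Jacobi map with respect to algebraic correspondences. The bottom arrow $\tau={\bf J}(\Phi_\cE^\dg)$ and the top arrow $\overline{\tau}$ are both governed by the same perfect complex $\cE\in\perf(X\times Y)$; the whole point is to check that, once $\tau$ is transported through the isomorphisms $AJ^n_\bbQ$ and $AJ^m_\bbQ$, it becomes the cycle-level correspondence $\ch(\cE)_{m+n+1}$ that defines $\overline{\tau}$.

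First I would recall from \cite{MT} the precise description of ${\bf J}(-)$ on objects and on morphisms. On objects, the isomorphism ${\bf J}(\perf^\dg(X))\simeq\prod_{i=0}^{d_X-1}J^i_a(X)$ identifies the Jacobian with the curve-generated odd pieces $NH^{2i+1}_{dR}(X)$ together with their natural lattices; on morphisms, the class of $\cE$ in $\Hom_{\NChow(\bbC)_\bbQ}(\perf^\dg(X),\perf^\dg(Y))$ acts as the cohomological correspondence attached to $\ch(\cE)$ on these odd pieces. Since $X$ and $Y$ are verepresentable, all $J^i_a(X)$ with $i\neq n$ and all $J^j_a(Y)$ with $j\neq m$ vanish, so $\tau$ collapses to the single map $J^n_a(X)_\bbQ\to J^m_a(Y)_\bbQ$ induced by the K\"unneth component of $\ch(\cE)$ sending $NH^{2n+1}_{dR}(X)$ to $NH^{2m+1}_{dR}(Y)$. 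A direct degree count, using Poincar\'e duality on $X$ of dimension $2n+1$, shows that the operator $H^{2n+1}_{dR}(X)\to H^{2m+1}_{dR}(Y)$ is precisely the one cut out by the component $\ch(\cE)_{m+n+1}\in CH^{m+n+1}_\bbQ(X\times Y)$, matching the component used to define $\overline{\tau}$.

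Next I would invoke the compatibility of the Abel--Jacobi map with algebraic correspondences: for an algebraic cycle $\Gamma$ on $X\times Y$ of the correct codimension, the induced map on algebraically trivial cycles $A^{n+1}_\bbQ(X)\to A^{m+1}_\bbQ(Y)$ and the induced map on intermediate Jacobians $J^n(X)\to J^m(Y)$ are intertwined by $AJ^n$ and $AJ^m$. Applying this with $\Gamma=\ch(\cE)_{m+n+1}$ --- which is legitimate because algebraic equivalence is an adequate equivalence relation, so the correspondence descends from $CH^{n+1}_\bbQ(X)$ to $A^{n+1}_\bbQ(X)$, exactly as already recorded in the paragraph preceding this lemma --- produces a commutative square whose top arrow is $\overline{\tau}$, whose verticals are $AJ^n_\bbQ$ and $AJ^m_\bbQ$, and whose bottom arrow is the cohomological correspondence on the Jacobians. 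Combining this square with the identification of that cohomological correspondence with $\tau$ obtained in the previous step yields the desired diagram.

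The hard part will be extracting from \cite{MT} the statement in exactly the form needed: that ${\bf J}(\Phi_\cE^\dg)$ is the correspondence action of $\ch(\cE)$ on the curve-generated odd cohomology, with the bookkeeping of Tate twists and of any $\sqrt{\Td}$-type normalization arranged so that the relevant graded piece is literally $\ch(\cE)_{m+n+1}$ rather than a corrected Mukai vector. A secondary technical point is checking that the classical Abel--Jacobi functoriality is available with $\bbQ$-coefficients and on the algebraically trivial subgroups --- not merely on homologically trivial cycles --- but this is standard and follows from the regularity of the Abel--Jacobi map recalled in \S\ref{sub:rational}.
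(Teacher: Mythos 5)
Your overall architecture (identify ${\bf J}(\Phi^\dg_\cE)$ with a correspondence acting on the algebraic parts, then invoke Abel--Jacobi functoriality for that correspondence) is the same as the paper's, but the step you dismiss as a ``direct degree count'' is exactly where the proof lives, and as stated it fails. Under the comparison between noncommutative and commutative motives used in \cite{MT} (and in the paper's proof), the morphism $\tau={\bf J}(\Phi^\dg_\cE)$ is induced not by $\ch(\cE)$ but by the graded correspondence $e:=\ch(\cE)\cdot p^\ast\Td(X)$, viewed as a morphism $M_\bbQ(X)\to M_\bbQ(Y)$ in $\Num^\ast(\bbC)_\bbQ$. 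The component of $e$ in $CH^{m+n+1}_\bbQ(X\times Y)$ is $\sum_{j\geq 0}\ch(\cE)_{m+n+1-j}\cdot p^\ast\Td(X)_j$, and no Poincar\'e-duality bookkeeping can discard the $j>0$ cross terms: each acts on $H^{2n+1}_{dR}(X)$ by $\alpha\mapsto q_\ast\bigl(p^\ast(\alpha\cup\Td(X)_j)\cup\ch(\cE)_{m+n+1-j}\bigr)$, which has the right degrees and is in general nonzero on cohomology. So the statement you hoped to extract from \cite{MT} --- that the relevant graded piece is ``literally $\ch(\cE)_{m+n+1}$ rather than a corrected Mukai vector'' --- is simply false at the level of correspondences; you correctly flagged this as the hard part, but then offered no argument for it, and your outline provides no mechanism that could supply one.

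The paper closes this gap by a cycle-level argument that uses verepresentability in an essential way, beyond collapsing the products of Jacobians (which is the only use you make of it). For an algebraically trivial cycle $z\in A^{n+1}_\bbQ(X)$ one has $(q_\ast(p^\ast(z)\cdot e))_{m+1}=q_\ast\bigl((p^\ast(z\cdot\Td(X))\cdot\ch(\cE))_{m+2n+2}\bigr)$, since $q_\ast$ is graded of degree $-(2n+1)$ and $p^\ast$ is a ring homomorphism. Because algebraic equivalence is an adequate equivalence relation, $z\cdot\Td(X)_j$ is an algebraically trivial cycle of codimension $n+1+j$, and item (i) of verepresentability gives $A^{n+1+j}_\bbQ(X)=0$ for $j>0$; hence $z\cdot\Td(X)=z$ and the only surviving term is $q_\ast(p^\ast(z)\cdot\ch(\cE)_{m+n+1})=\overline{\tau}(z)$. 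In other words, the Todd corrections vanish only after restriction to algebraically trivial cycles, under the hypothesis $A^{i+1}_\bbZ(X)=0$ for $i\neq n$; they do not vanish on $H^{2n+1}_{dR}(X)$, so the identification of $\tau$ with $\ch(\cE)_{m+n+1}$ cannot be made purely cohomologically. Your Abel--Jacobi functoriality step is fine (the paper uses it implicitly through the identification $A^{n+1}_\bbQ(X)\simeq J(X)_\bbQ$), but without this cycle-level Todd-class cancellation the proposed proof does not go through.
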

\begin{proof}
Let us denote by $\Chow^\ast(\bbC)_\bbQ$ (resp. by $\Num^\ast(\bbC)_\bbQ$) the category of Chow (resp. numerical)
motives where the morphisms are the graded correspondences. As proved in \cite[Thm.~1.9]{Semi},
the category $\Num^\ast(\bbC)_\bbQ$ (denoted by $\Num(\bbC)_\bbQ/_{\!\!-\otimes \bbQ(1)}$ in {\em loc. cit.})
is abelian semi-simple. Recall from \cite[\S4]{MT} that one has a well-defined inclusion $\Ab(\bbC)_\bbQ \subset \Num^\ast(\bbC)_\bbQ$
of abelian semi-simple categories. Now, consider the following graded correspondence
$$ e:= ch(\cE) \cdot p^\ast \mathrm{Td}(X) \in \oplus_i CH^i_\bbQ(X\times Y)\,,$$
where $\mathrm{Td}(X)$ is the Todd class of $X$. As proved in \cite[\S4]{MT} (see also \cite[\S8]{CvsNC}), $J(X)_\bbQ$ (resp. $J(Y)_\bbQ$)
is the largest direct summand of $M_\bbQ(X) \in \Num^\ast(\bbC)_\bbQ$ (resp. of $M_\bbQ(Y)$) which belongs to $\Ab(\bbC)_\bbQ$.
Moreover, the morphism $\tau={\bf J}(\Phi_\cE^\dg):J(X)_\bbQ \to J(Y)_\bbQ$ is the largest direct summand of the graded correspondence $e$
(considered as a morphism in $\Num^\ast(\bbC)_\bbQ$ from $M_\bbQ(X)$ to $M_\bbQ(Y)$) which belongs to $\Ab(\bbC)_\bbQ$. The correspondence $e$ is a mixed cycle. Let us now show that the only
degree that contributes to $\tau$ is $e_{m+n+1}$; note that this automatically
achieves the proof. Since the relative dimension of the projection morphism $q:X\times Y \to Y$
is equal to the dimension of $X$, \ie $2n+1$, we have a well-defined {\em graded} homomorphism
$$ q_\ast: \mathrm{CH}^\ast_\bbQ(X\times Y) \too \mathrm{CH}^{\ast-2n-1}_\bbQ(Y)$$
and consequently we obtain the equality
$$(q_\ast(p^\ast(z)\cdot e))_{m+1}=q_\ast((p^\ast(z) \cdot ch(\cE) \cdot p^\ast\mathrm{Td}(X))_{m+2n+2}).$$
Moreover, since $p^*$ is a morphism of commutative rings, \ie it respects the commutative
intersection pairing (see \cite[A1-2, page 426]{hartshorne}), we also the equality
$$p^\ast(z) \cdot ch(\cE) \cdot p^\ast\mathrm{Td}(X) = p^*(z\cdot\Td(X))\cdot\ch(\cE)$$
Now, recall from Pappas \cite[\S1]{Pappas} that the Todd class $\mathrm{Td}(X)$
is given by $\sum_{i \geq 0} \frac{\mathfrak{D}_i}{T_i}$. Here, $\mathfrak{D}_i$ is a polynomial with integral coefficients in
the Chern classes and $T_i$ is the product $\prod_p p ^{[\frac{i}{p-1}]}$ taken over all the prime numbers; the symbol $[-]$ stands
for the integral part. Note that $\mathfrak{D}_0=1$. The algebraic equivalence relation on cycles is an \it adequate \rm 
equivalence relation; see Andr{\'e} \cite[\S 3.2.1]{Andre}. This means, in
particular, that by intersecting the algebraically trivial cycle $z$ with any other cycle one obtains an algebraically trivial cycle; see Andr{\'e} \cite[Definition 3.1.1.1(3)]{Andre}. By assumption, $A^i_{\bbZ}(X)=0$ for all $i \neq n+1$.
Hence, in the following intersection product
$$z \cdot \Td(X)= z \cdot \sum_{i \geq 0} \frac{\mathfrak{D}_i}{T_i}=
\sum_{i \geq 0} (z \cdot \frac{\mathfrak{D}_i}{T_i})\,,$$
all the components of degree $\neq n+1$ are trivial. Since $z$ has degree $n+1$, $z \cdot \Td(X) = z \cdot \frac{\mathfrak{D_0}}{T_0} = z$,
and therefore $p^\ast(z \cdot \Td(X))=p^\ast(z)$ is a purely $n+1$-codimensional cycle. We obtain in this way the following equality
$$(q_\ast(p^\ast(z)\cdot e))_{m+1} = q_\ast(p^*(z) \cdot \ch(\cE)_{m+n+1})$$
which achieves the proof.
\end{proof}

Recall now from the proof of Theorem~\ref{thm:main1} that the retraction $\sigma:J(Y)_\bbQ \to J(X)_\bbQ$ of $\tau$ is obtained
by applying the Jacobian functor ${\bf J}(-)$ to the Fourier-Mukai dg functor $\Phi^\dg_{\cE_R}:\perf^\dg(Y) \to \perf^\dg(X)$,
where $\cE_R:=\cE^\vee \otimes^L p^\ast \omega_X [d_X] \in \perf(Y \times X)$. Consider the following homomorphism
\begin{eqnarray}\label{eq:description1}
&\overline{\sigma_1}:A^{m+1}_\bbQ(Y) \too A_\bbQ^{n+1}(X) & w \mapsto p_\ast(q^\ast(w) \cdot (-1)^{m+n} ch(\cE)_{m+n+1})\,,
\end{eqnarray}
and let $\sigma_1: J(Y)_\bbQ \to J(X)_{\bbQ}$ be the unique morphism induced by $\overline{\sigma_1}$ via the Abel--Jacobi map $AJ_\bbQ$.

\begin{lemma}\label{lem:key2}
Whenever the canonical bundle of $X$ is ample or antiample, there is an automorphism
$\psi: X \stackrel{\simeq}{\to} X$ making the following diagram commute
$$
\xymatrix{
A^{m+1}_\bbQ(Y) \ar[d]^-{\simeq}_-{AJ^m_\bbQ} \ar[r]^-{\overline{\sigma_1}} & A^{n+1}_\bbQ(X) \ar[d]_-{\simeq}^-{AJ^n_\bbQ} \ar[r]^-{\psi^\ast} & A^{n+1}_\bbQ(X) \ar[d]_-{\simeq}^-{AJ^n_\bbQ} \\
J(Y)_\bbQ \ar[r]^-{\sigma_1} \ar@/_1pc/[rr]_-{\sigma} & J(X)_\bbQ \ar[r]^-{\psi^\ast} \ & J(X)_\bbQ\,.
}
$$
Whenever the canonical bundle of $X$ is trivial, we have $\sigma_1=\sigma$.
\end{lemma}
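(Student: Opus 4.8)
The plan is to read off $\sigma={\bf J}(\Phi^\dg_{\cE_R})$ from its kernel $\cE_R=\cE^\vee\otimes^L p^\ast\omega_X[d_X]$ and to compare the resulting correspondence with the one defining $\sigma_1$. First I would repeat the argument of Lemma~\ref{lem:key1} verbatim, now for the kernel $\cE_R$: since the source of $\Phi_{\cE_R}$ is $Y$ and an algebraically trivial cycle $w\in A^{m+1}_\bbQ(Y)$ satisfies $w\cdot\Td(Y)=w$, the only graded component of the correspondence $\ch(\cE_R)\cdot q^\ast\Td(Y)$ acting on $A$-groups is $\ch(\cE_R)_{m+n+1}$. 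Using $\ch(\cE^\vee)_a=(-1)^a\ch(\cE)_a$, $\ch(p^\ast\omega_X)=p^\ast e^{c_1(\omega_X)}$ and the oddness of $d_X$, this component expands as
\[
\ch(\cE_R)_{m+n+1}=\sum_{k\ge 0}(-1)^{m+n+k}\,\ch(\cE)_{m+n+1-k}\cdot p^\ast\!\left(\frac{c_1(\omega_X)^k}{k!}\right),
\]
whose $k=0$ summand is precisely the kernel $(-1)^{m+n}\ch(\cE)_{m+n+1}$ of $\overline{\sigma_1}$.

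The case of trivial canonical bundle is then immediate. Here $c_1(\omega_X)=0$, so every $k\ge 1$ summand disappears and $\ch(\cE_R)_{m+n+1}=(-1)^{m+n}\ch(\cE)_{m+n+1}$; comparing with the definition of $\overline{\sigma_1}$ and passing through the Abel--Jacobi isomorphisms one concludes $\sigma=\sigma_1$.

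For ample or antiample canonical bundle the higher $c_1(\omega_X)$-terms are governed by the Serre functor. By the projection formula $\Phi_{\cE_R}\simeq S_X\circ\Phi_{\cE^\vee}$, where $S_X=-\otimes\omega_X[d_X]$ is the Serre functor of $\perf(X)$ and, after the sign normalisation above, ${\bf J}(\Phi_{\cE^\vee})=\sigma_1$; hence $\sigma={\bf J}(S_X)\circ\sigma_1$. It remains to realise ${\bf J}(S_X)$ as the pullback $\psi^\ast$ of a scheme automorphism. I would invoke Bondal--Orlov reconstruction, which is available precisely because $\omega_X$ is ample or antiample: every autoequivalence of $\perf(X)$ decomposes, up to a shift and a twist by a line bundle, as $\psi^\ast$ for a unique $\psi\in\mathrm{Aut}(X)$. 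On the abelian-variety summand $J(X)_\bbQ\subset M_\bbQ(X)\in\Num^\ast(\bbC)_\bbQ$ a shift acts by a sign and a line-bundle twist acts through the degree-zero part of $e^{c_1}$, i.e.\ trivially, so ${\bf J}(S_X)=\psi^\ast$ and therefore $\sigma=\psi^\ast\circ\sigma_1$. Commutativity of the right-hand square is the functoriality of the Abel--Jacobi map under $\psi$, and commutativity of the left-hand square is the definition of $\sigma_1$.

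The main obstacle is the identification of ${\bf J}(S_X)$ with a pullback $\psi^\ast$. Two things must be checked with care: that Bondal--Orlov applies, which is guaranteed only by the ampleness or antiampleness of $\omega_X$ and is what singles out the automorphism $\psi$; and that the shift and line-bundle-twist factors in the Bondal--Orlov factorisation of $S_X$ induce the identity on the abelian-variety summand $J(X)_\bbQ$, so that $S_X$ and $\psi^\ast$ act in the same way there. The sign contributed by the shift $[d_X]$ must moreover be tracked against the $(-1)^{m+n}$ normalisation built into $\overline{\sigma_1}$; this is what ultimately renders $\psi$ the identity, and $\sigma=\sigma_1$, in the trivial-canonical-bundle case, where $S_X$ reduces to a pure shift.
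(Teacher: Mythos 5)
Your proposal is correct and follows essentially the same route as the paper's own proof: you factor $\Phi_{\cE_R}$ through the twist by $\omega_X$ (grouping the shift into the Serre functor $S_X$, where the paper instead groups it into $\cE_!:=\cE^\vee[d_X]$ and writes $\Phi_{\cE_R}=\Psi\circ\Phi_{\cE_!}$), you identify the kernel-theoretic part with $\sigma_1$ via the same Chern character sign computation $\ch_{m+n+1}(\cE^\vee[d_X])=(-1)^{m+n}\ch_{m+n+1}(\cE)$, and you invoke Bondal--Orlov reconstruction for ample or antiample $\omega_X$ exactly as the paper does to produce $\psi$. The sign bookkeeping you flag does close up: your intermediate identities ${\bf J}(\Phi_{\cE^\vee})=\sigma_1$ and ${\bf J}(S_X)=\psi^\ast$ are each off by the factor $(-1)^{d_X}=-1$ coming from the odd shift, and these two discrepancies cancel in the composite, giving $\sigma=\psi^\ast\circ\sigma_1$ as required.
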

\begin{proof}
Let us assume first that the canonical bundle of $X$ is either ample or antiample.
Since the Abel--Jacobi map is natural (\ie it is a natural transformation between the functors $A^{n+1}(-)_\bbQ$ and $J(-)_\bbQ$),
we have the equality $\psi^\ast \circ AJ^n = AJ^n \circ \psi^*$. Hence, we need only to show that $\sigma=\sigma_1 \circ \psi^*$. Let us denote $\cE_!$ the perfect complex $\cE^{\vee} [d_X]$, and by $\Psi$ the autoequivalence of $\perf(X)$ given by the
tensorization with $\omega_X$.
Note that $\Phi_{\cE_R}= \Psi \circ \Phi_{\cE_!}$. Since by assumption $\omega_X$ is ample or antiample, the autoequivalence $\Psi$
induces an automorphism $\psi: X \stackrel{\simeq}{\to} X$; see Bondal-Orlov \cite[Thm.~2.5]{bondorl-reconst}. Similarly to the proof of Lemma \ref{lem:key1}, it suffices only to perform an explicit Chern class calculation on $\cE_!$ to obtain the searched commutative diagram. Consider the graded correspondence
$$e_!:=ch(\cE_!) \cdot p^\ast \mathrm{Td}(Y) \in \oplus_i CH_\bbQ^i(Y \times X)\,.$$
Once again, similarly to the proof of Lemma~\ref{lem:key1}, the morphism $\sigma={\bf J}(\Phi_{\cE_R}^\dg): J(Y)_\bbQ \to J(X)_\bbQ$ is obtained by composing $\psi^*$ with the largest direct summand of the graded correspondence $e_!$
(considered as a morphism in $\Num^\ast(\bbC)_\bbQ$ from $M_\bbQ(Y)$ to $M_\bbQ(X)$) which belongs to $\Ab(\bbC)_\bbQ$.
Hence, it suffices to show that \eqref{eq:description1} agrees with the homomorphism
\begin{eqnarray*}
A^{m+1}_\bbQ(Y) \too A^{n+1}_\bbQ(X) && w \mapsto (p_\ast(q^\ast(w) \cdot e_!))_{n+1}\,.
\end{eqnarray*}
The same calculations as in Lemma \ref{lem:key1} allow us to conclude that the cycle inducing $\sigma_1$ is $\ch_{m+n+1}(\cE_!)$. So the claim holds once we show the following equality
\begin{eqnarray}\label{eq:homo2}
 \ch_{m+n+1}(\cE_!)=\ch_{m+n+1}(\cE^\vee [d_X])=(-1)^{m+n} \ch_{m+n+1}(\cE)\,.
 \end{eqnarray}
Since $d_X=2n+1$ is odd we have $\ch(\cE^{\vee} [2n+1]) = -\ch(\cE^{\vee})$. Moreover, $\ch_i(\cE^{\vee})=(-1)^i\ch_i(\cE)$. This follows from the fact that the Chern polynomial
${\mathrm c}_t(\cE^{\vee})$ is obtained from ${\mathrm c}_t(\cE)$ by alternating
signs; see Hartshorne \cite[A.3]{hartshorne}. This implies that $\ch_i(\cE^{\vee})=(-1)^i\ch_i(\cE)$ and
consequently that
$$\ch_{m+n+1}(\cE^{\vee} [2n+1]) =-\ch_{m+n+1}(\cE^{\vee})=(-1)^{m+n}\ch_{m+n+1}(\cE).$$
The above equality \eqref{eq:homo2} follows now from the fact that $\cE^{\vee}[2n+1]= \cE_!$.

Finally, let us assume that the canonical bundle of $X$ is trivial. In this case we have $\cE_!=\cE_R$
and so the proof is similar to the one of Lemma~\ref{lem:key1}.
\end{proof}

It follows from Lemma \ref{lem:key2} that whenever the canonical bundle of $X$ is trivial,
we can consider $\psi= \id_X$ and $\sigma_1=\sigma$. In order to avoid repeating twice the same arguments, we will assume from now on that this choice has been made.

We now have all the ingredients needed for the conclusion of the proof of Theorem~\ref{thm:main2}.
Since by hypothesis $X$ (resp. $Y$) is verepresentable the non-trivial algebraic Jacobian $J(X)$ (resp. $J(Y)$)
is an abelian variety endowed with the incidence polarization $\theta_X$ (resp. $\theta_Y$) with respect to $X$ (resp. to $Y$).
In particular, this allows one to consider $J(X)$ and $J(Y)$ as canonical representative
for ${\bf J}(X) \simeq J(X)_{\bbQ}$ and ${\bf J}(Y) \simeq J(Y)_{\bbQ}$. Thanks to Lemma \ref{lemma:from-uptoiso-to-iso},
having a canonical choice of (principally polarized) abelian varieties representing $J(X)_{\bbQ}$ and $J(Y)_{\bbQ}$,
the split injective map $\tau: J(X)_{\bbQ} \to J(Y)_{\bbQ}$ can be seen as an isogeny $\tau: J(X) \to J(Y)$. Clearly, this isogeny
is described by the algebraic cycle $\ch_{m+n+1}(\cE)$; see Lemma \ref{lem:key1}.

On the other hand, Lemma \ref{lemma:from-uptoiso-to-iso} furnish us a morphism of algebraic tori $\sigma: J(Y) \to J(X)$. A similar argument furnish us
a morphism $\sigma_1: J(Y) \to J(X)$, described by the cycle $(-1)^{m+n} \ch_{m+n+1}(\cE)$, and also an isomorphism $\psi^\ast:J(X) \stackrel{\simeq}{\to} J(X)$.
Moreover, thanks to Lemma \ref{lem:key2}, the following equality holds $\sigma = \psi^\ast \circ \sigma_1$. Finally, note that since $\psi: X \stackrel{\simeq}{\to} X$ is an
isomorphism, $\psi^\ast$ respects the principal polarization.

Let us now show that the isogeny $\tau: J(X) \to J(Y)$, considered as a morphism
of abelian varieties, pulls-back the principal polarization to a principal polarization. In order to do it, we will make use of the incidence property of $\theta_Y$. By hypothesis, $J(Y)$ is the algebraic representative of $A^{m+1}_{\bbZ}(Y)$ and the principal polarization
$\theta_{Y}$ of $J(Y)$ is the incidence polarization with respect to $Y$.
If $f:T \to A^{m+1}_{\bbZ}(Y)$ is an algebraic map
defined by a cycle $z$ in $CH_{\bbQ}^{m+1}(T \times Y)$, then (as explained in \S\ref{sub:principal}) we have the following equality
\begin{equation}\label{eq:incid-prop}(G \circ f)^* \theta_Y =(-1)^{m+1} I(z),\end{equation}
of divisorial self correspondences on $T$; see Definition \ref{def:I(z)}(ii). Note that $f$ is not
necessarily surjective. Set $T=J(X)$ and consider the cycle $\ch_{m+n+1}(\cE)$ as giving rise to an algebraic map $\tilde{\tau}:J(X) \to A^{m+1}_{\bbZ}(Y)$:
points of the algebraic variety $J(X)$ can be considered as equivalence classes of cycles on $X$.
Hence, an element $\alpha \in J(X)$ can be thought as a $0$-dimensional cycle on the algebraic variety $J(X)$ and also as an equivalence class of a $n+1$-codimensional cycle on $X$. In order to construct an algebraic map $J(X) \to A^{m+1}_\bbZ(Y)$,
recall that $G_X: A^{n+1}_\bbZ(X) \twoheadrightarrow J(X)$ is surjective. Hence, choose a representative of $\alpha \in A^{n+1}_{\bbZ}(X)$ and then use the
cycle $\ch_{m+n+1}(\cE)$. Since the map $\tau$ is induced by such a correspondence, the choice of a representative becomes irrelevant as soon
as we consider the map $G_Y: A^{m+1}_\bbZ(Y) \twoheadrightarrow J(Y)$. In particular, we have $G_Y \circ \tilde{\tau}=\tau$. By applying the incidence property \eqref{eq:incid-prop} to the algebraic map $\tilde{\tau}$ we then obtain the following equality
\begin{equation}\label{eq:equality-final}
\tau^\ast \theta_Y =(G_Y \circ \tilde{\tau})^* \theta_Y= (-1)^{m+1} \mathfrak{i},
\end{equation}
where $\mathfrak{i}:=I(\ch_{m+n+1}(\cE))$ is the incidence correspondence of $\ch_{m+n+1}(\cE)$; see Definition \ref{def:I(z)}. 
The principal polarization $\theta_X$ allows to identify $J(X)$ with its dual; see Definition \ref{def:polarization}. In order to conclude the proof it remains only to show that $\mathfrak{i}$ is isomorphic, via $\psi^*$
and up to a sign, to the identity correspondence (here we
identify $J(X)$ with its dual). Thanks to Lemmas \ref{lem:key1}-\ref{lem:key2} and Definition \ref{def:I(z)},
we have $\sigma_1 \circ \tau= (-1)^{m+n}\mathfrak{i}$. Moreover, the following commutative diagram of algebraic tori holds:
$$\xymatrix{\id: J(X) \ar@/_1pc/[rr]_{(-1)^{m+n}\mathfrak{i}}\ar[r]^{\tau} & J(Y) \ar@/^1.5pc/[rr]^{\sigma}\ar[r]^{\sigma_1} & J(X) \ar[r]_{\psi^\ast} & J(X). }$$
As a consequence, $\psi^\ast \circ \mathfrak{i} = (-1)^{m+n} \id$. This achieves the first claim of Theorem~\ref{thm:main2}. The second claim follows automatically from the arguments above and from item (iii) of Theorem~\ref{thm:main1}. 

\section{Remaining proofs}\label{sec:proofs}
\subsection*{Proof of Proposition~\ref{prop:double-torelli}}
\begin{proof}
Since the (generalized) Torelli theorem holds in each one of the cases (i)-(iii), the implication $(\Leftarrow)$ follows from
Corollary~\ref{cor:Torelli}. Now, suppose that $X$ and $Y$ are isomorphic via an isomorphism $f^\ast:X\stackrel{\simeq}{\to} Y$.
Clearly, the inverse image functor gives rise to an equivalence $f^\ast: \perf(Y) \stackrel{\simeq}{\to} \perf(X)$ which is
of Fourier-Mukai type with kernel given by structure sheaf of the graph of $f$; see Huybrechts \cite[Example 5.4]{huybrechts}.
It remains then only to show that this equivalence restricts to an equivalence $\cT_X \simeq \cT_Y$.
Recall from \S\ref{sec:app-4} that when $X$
and $Y$ are elliptic curves (\ie item (ii) of Proposition \ref{prop:double-torelli} with $d_X=d_Y=1$,
or equivalently $r=1$, $n=2$ and $T=\mathrm{point}$ in the notations of \S\ref{sec:app-4}), we have $C_X=X$, $C_Y=Y$ and the equivalence $\perf(X)
= \perf(C_X) \simeq \perf(C_Y) = \perf(Y)$. In all the remaining cases, $X$ and $Y$ are Fano varieties with Picard number one and index $i$,
\ie $\omega_X\simeq \cO_X(-i)$ and $\omega_Y \simeq \cO_Y(-i)$. In particular $\mathrm{Pic}(X)\simeq \bbZ[\cO_X(1)]$ and $\mathrm{Pic}(Y)\simeq \bbZ[\cO_Y(1)]$. Thanks to the work of Kuznetsov
\cite{kuznet-v14,kuznetfanothreefolds,kuznetquadrics}, one has moreover the following semi-orthogonal decompositions
\begin{eqnarray*}
\perf(X) =\langle \cT_X, \langle\cO_X, \ldots, \cO_X(i-1)\rangle\rangle && \perf(Y) =\langle \cT_Y, \langle\cO_Y, \ldots, \cO_Y(i-1)\rangle\rangle\,.
\end{eqnarray*}
Hence, the isomorphism $f:X\stackrel{\simeq}{\to} Y$ gives rise to an isomorphism 
\begin{eqnarray*}
f^\ast: \bbZ[\cO_Y(1)]\simeq \mathrm{Pic}(Y) \stackrel{\simeq}{\to} \mathrm{Pic}(X) \simeq \bbZ[\cO_X(1)] && \cO_Y(j) \mapsto \cO_X(j)
\end{eqnarray*}
of abelian groups. Consequently, $\cT_X^\perp \simeq f^\ast \cT_Y^\perp$ and so the above equivalence $f^\ast: \perf(Y) \stackrel{\simeq}{\to} \perf(X)$ restricts to an equivalence $\cT_Y \simeq \cT_X$. This achieves the proof.
\end{proof}
\subsection*{Proof of Theorem~\ref{thm:conjectures}}
\begin{proof}
Recall from Debarre-Iliev-Manivel \cite[\S 4.1]{debarre-iliev-manivel} that when $d=2$, $Y_2$ is a quartic double solid and that
the generic $X_{10}$ is a linear section of a quadric hypersurface inside ${\mathrm{Gr}}(2,\bbC^5)$. Thanks to the work of Clemens, Iliev, Logach\"ev and Tihomirov
(see \cite{clemens4ic,iliev10,logachevV10,tihoquarticsolid}), $Y_2$ and $X_{10}$ have only have one non-trivial principally polarized
intermediate Jacobian carrying an incidence polarization. Hence, these schemes satisfy all the assumptions of Theorem \ref{thm:main2}; see Lemma \ref{lem-vere-3fold-is-ratrep} and Example \ref{ex:very2}.

Now, note that Conjecture \ref{conj:Kuznetsov2} implies that for every $X_{10}$ there exists at least one quartic double solid $Y_2$
such that the triangulated categories $\cT_{X_{10}}$ and $\cT_{Y_2}$ are equivalent. On the other hand, Conjecture \ref{conj:Kuznetsov1} implies that the composed functor
$$ \langle \cT_{X_{10}}, \langle \cE_{X_{10}}, \cO_{X_{10}} \rangle \rangle \to \cT_{X_{10}} \simeq \cT_{Y_2} \hookrightarrow \langle \cT_{Y_2}, \langle
\cO_{Y_2},\cO_{Y_2}(1) \rangle \rangle$$
is of Fourier-Mukai type. Note that since the canonical bundle of $X$ is antiample, all assumptions of Theorem \ref{thm:main2} are verified.
One concludes then that $J(X_{10}) \simeq J(Y_2)$ as principally polarized abelian varieties.
The key point now is that this isomorphism {\em does not exist} in full generality. As explained by Debarre-Iliev-Manivel
\cite[Corollary~5.4]{debarre-iliev-manivel}, varying $X_{10}$ in $\cM\cF^1_{10}$ gives rise to a $20$-dimensional family of intermediate Jacobians $J(X_{10})$; consult also \cite[Thm.~5.1]{debarre-iliev-manivel}. However, $\cM\cF^2_{2}$ is only 19-dimensional. Hence, we conclude that the correspondence $Z_2$ (whose existence is conjectured by Kuznetsov) can never be dominant
onto $\cM\cF^1_{10}$. This achieves the proof.
\end{proof}
\subsection*{Proof of Theorem~\ref{thm:blow-ups}}
\begin{proof}
Let $f:\widetilde{Y} \to Y$ be the blow-up of a $\bbC$-scheme $Y$ along a smooth center $D$.
As proved by Orlov \cite{orlovprojbund}, one has the following semi-orthogonal decomposition
\begin{equation}\label{eq:orthogonal}
\perf(\widetilde{Y})=\langle f^\ast \perf(Y), \langle\underbrace{\perf(D),\ldots, \perf(D)}_{(d-1)\text{-}\mathrm{factors}} \rangle \rangle \,,
\end{equation}
where $d$ is the codimension of $D$ in $Y$. In the particular cases $D:=\mathrm{pt}$ and $D:=C$, 
the above semi-orthogonal decomposition \eqref{eq:orthogonal} reduces to 
\begin{eqnarray*}
\perf(\widetilde{X_{\mathrm{pt}}})\simeq\langle f^\ast \perf(X), \perf(\mathrm{pt}) , \perf(\mathrm{pt}) \rangle && \perf(\widetilde{X_C}) = \langle f^\ast \perf(X), \perf(C)\rangle\,.
\end{eqnarray*}
Now, recall from Clemens-Griffiths \cite{clemensgriffiths} that in dimension $3$, $\widetilde{X_{\mathrm{pt}}}$ and $\widetilde{X_C}$
are verepresentable whenever $X$ is verepresentable. Let us now verify that 
the remaining conditions of Theorem~\ref{thm:main2} are satisfied. In the case where $D:=\mathrm{pt}$, take $\cT_X:=\perf(X)$.
Clearly, the equivalence $\perf(X) \simeq f^\ast \perf(X)$ gives rise to a composed functor $\Phi:\perf(X) \to
\perf(\widetilde{X_{\mathrm{pt}}})$ which is of Fourier-Mukai type with kernel given by structure sheaf of $f$.
Since $X$ is three-dimensional and verepresentable, the bilinear pairings \eqref{eq:pairings1} are non-degenerate for $X$ 
(see Lemma \ref{lem-vere-3fold-is-ratrep}), and this holds also for $\widetilde{X_{\mathrm{pt}}}$.
Finally, the canonical bundle of $X$ is either ample, antiample or trivial. As a consequence, all the conditions of Theorem~\ref{thm:main2}
are satisfied. Hence, one obtains an isomorphism $J(\widetilde{X_{\mathrm{pt}}})\simeq J(X)$ of principally polarized abelian varieties.

In the case $D:=C$, one needs to apply Theorem~\ref{thm:main2} twice. First take $\cT_X:=\perf(X)$. Clearly, the equivalence $\perf(X)
\simeq f^\ast \perf(X)$ gives rise to a functor $\Phi:\perf(X) \to \perf(\widetilde{X_{C}})$ which is of Fourier-Mukai type. Since $X$ is three-dimensional and
rationally representable, the bilinear pairings \eqref{eq:pairings1} are non-degenerate (see Vial \cite[\S 4]{Vial}),
and this holds also for $\widetilde{X_C}$.
Finally, the canonical bundle of $X$ is either ample, antiample or trivial.
As a consequence, all conditions of Theorem \ref{thm:main2} (except ${\bf J}(\cT_Y^{\perp,\dg})=0$) are satisfied. Hence, one obtains
a split injective morphism $J(X) \to J(\widetilde{X_C})$ of principally polarized abelian varieties. 
Now, take $\cT_X:=\perf(C)$. The inclusion of categories $\Phi:\perf(C) \hookrightarrow \perf(\widetilde{X_C})$ is 
of Fourier-Mukai type; see Orlov \cite[Assertion 4.2]{orlovprojbund}\cite{orlov-represent} or Huybrechts \cite[Proposition 11.16]{huybrechts}. Moreover, the bilinear pairings \eqref{eq:pairings1}
are non-degenerate for $C$, and $C$ is verepresentable. Finally the canonical bundle of $C$ is either ample, antiample
or trivial. As a consequence, all the conditions of Theorem~\ref{thm:main2} are satisfied. Hence, one obtains a split injective
morphism $J(C) \to J(\widetilde{X_C})$ of principally polarized abelian varieties. Now, recall that we have the following semi-orthogonal
decomposition $\perf(\widetilde{X_C})=\langle f^\ast \perf(X),\perf(C) \rangle$. By construction of the Jacobian functor we hence obtain
an isomorphism ${\bf J}(\perf^\dg(\widetilde{X_C}))\simeq {\bf J}(\perf^\dg(X)) \times {\bf J}(\perf^\dg(C))$ in $\Ab(\bbC)_\bbQ$.
As proved in Lemma \ref{lemma:from-uptoiso-to-iso}, this gives rise to an isogeny $J(\widetilde{X_C}) \simeq J(X) \times J(C)$ and therefore to an injective morphism $\alpha: J(X) \times J(C) \to J(\widetilde{X_C})$ of principally polarized
abelian varieties. In order to prove its surjectivity, consider its cokernel $A'$. By the previous considerations,
$A'$ is a principally polarized abelian variety isogenous to $0$. Hence, $A'=0$ as principally polarized abelian
variety and consequently $\alpha$ is surjective as well.
\end{proof}
\subsection*{Proof of Theorem~\ref{thm:morita-hyperbolic}}
\begin{proof}
The proof will consist on verifying all conditions of Theorem~\ref{thm:main1}.
Note first that thanks to Kuznetsov and others (see \cite[Thm.~7.1]{kuznetbasechange}\cite{kuznetquadrics} and \cite{auel-bernar-bologn}) the composed functor $\Phi: \perf(Q) \to \perf(S,\cC_0) \simeq \perf(S,\cC_0') \hookrightarrow \perf(Q')$ is of Fourier-Mukai type. As proved by Kuznetsov \cite{kuznetquadrics}, we have the following semi-orthogonal decompositions
$$\perf(Q) = \langle \perf(S,\cC_0), \langle \underbrace{\perf(S),\ldots, \perf(S)}_{n\text{-}\mathrm{factors}} \rangle\rangle$$
$$\perf(Q') = \langle \perf(S,\cC_0'), \langle \underbrace{\perf(S),\ldots, \perf(S)}_{n-2\text{-}\mathrm{factors}} \rangle\rangle\,.$$
Consequently, since by hypothesis ${\bf J}(\perf^\dg(S))=0$, all the assumptions of Theorem~\ref{thm:main1} are verified. This achieves the proof.
\end{proof}
\subsection*{Proof of Theorem~\ref{thm:hpd-quadr}}
\begin{proof}
Similarly to the proof of Theorem~\ref{thm:morita-hyperbolic}, we will verify all conditions of Theorem~\ref{thm:main1}. Note first that thanks to relative homological projective duality (see \cite{auel-bernar-bologn} and Kuznetsov \cite[\S 6.1]{kuznetsov:hpd})
the composed functor $ \Phi:\perf(X) \to \perf(S,\cC_0) \simeq \perf(S,\cC_0) \hookrightarrow \perf(Q)$ is of Fourier-Mukai type. By construction of the Jacobian functor ${\bf J}(-)$ the above semi-orthogonal decompositions \eqref{eq:mot-decomp-1}-\eqref{hpd_Fano} give rise to the isomorphisms
$$
{\bf J}(\perf^\dg(Q)) \simeq {\bf J}(\perf^\dg(S,\cC_0))\times \prod^{n}_{i=0} {\bf J} (\perf^\dg(S))
$$
$${\bf J}(\perf^\dg(X)) \simeq {\bf J}(\perf^\dg(S,\cC_0))\times \prod^{n-2r}_{i=0} {\bf J} (\perf^\dg(T))$$
in $\Ab(\bbC)_\bbQ$. Hence, it suffices to show that ${\bf J}(\perf^\dg(S))=0$. Since $S \to T$ is a $\bbP^r$-bundle we have
(thanks to Orlov \cite{orlovprojbund}) the following semi-orthogonal decomposition 
$\perf(S) = \langle \perf(T),\ldots, \perf(T)\rangle$ with $r+1$ factors. Consequently, since by hypothesis ${\bf J}(\perf^\dg(T))=0$, all the conditions of Theorem~\ref{thm:main1} are verified. This achieves the proof.
\end{proof}
\subsection*{Proof of Theorem~\ref{thm:hpd-gentype}}
\begin{proof}
Similarly to the proof of Theorem~\ref{thm:morita-hyperbolic},
it suffices to verify all conditions of item (ii) of Theorem~\ref{thm:main1}.
Note first that the composition $ \Phi:\perf(X) \simeq \perf(S,\cC_0) \hookrightarrow \perf(Q)$ is a fully faithful functor between derived categories of smooth projective $\bbC$-schemes.
Consequently, it is of Fourier-Mukai type; see Orlov \cite{orlov-represent}. 
Following the notations of Theorem~\ref{thm:main1}, we can then take $\cT_X:= \perf(X)$. Since $\cT_X^\perp=0$ we have ${\bf J}(\cT_X^{\perp,\dg})=0$. Recall that by assumption the bilinear pairings \eqref{eq:pairings1} (associated to $X$ and $Q$) are non-degenerate. Consequently, all the conditions of Theorem~\ref{thm:main1} are verified and so the proof is finished.
\end{proof}


\begin{thebibliography}{00}


\bibitem{alekseev:dP4}
V. A.\ Alekseev, \emph{On conditions for the rationality of three-folds with a
  pencil of del {P}ezzo surfaces of degree {$4$}}. Mat. Zametki \textbf{41}
  (1987), no.~5, 724--730, 766.

\bibitem{Andre} Y.~Andr{\'e}, {\em Une introduction aux motifs (motifs purs, motifs mixtes, p{\'e}riodes)}. Panoramas et Synth{\`e}ses {\bf 17}. Soci{\'e}t{\'e} Math{\'e}matique de France, Paris, 2004.


\bibitem{auel-bernar-bologn} A.~Auel, M.~Bernardara and M.~Bolognesi, \emph{Fibrations in complete intersections of quadrics, Clifford algebras,
derived categories, and rationality problems}. To appear on Journal de Math. Pures et appliqu\'ees, preprint available at arXiv:1109.6938.

\bibitem{beauvilleprym}
A.~Beauville, \emph{Vari\'et\'es de {P}rym et jacobiennes interm\'ediaires}. Ann. Sci. de l'ENS \textbf{10} (1977), 309--391.


\bibitem{beltrachow}
M.~Beltrametti, \emph{On the {C}how group and the intermediate jacobian of a
  conic bundle}. Annali di Mat. pura e applicata \textbf{141} (1985), no.~4,
  331--351.

  
\bibitem{berna_macri_mehro_stella}
M.\ Bernardara, E.\ Macr{\`\i}, S.\ Mehrotra, and P.\ Stellari, \emph{A categorical invariant for cubic threefolds}. 
Advances in Math. {\bf 229} (2), 770-803 (2012)


\bibitem{bolognesi_bernardara:conic_bundles}
M.\ Bernardara and M.\ Bolognesi, \emph{Derived categories and
  rationality of conic bundles}, Compositio Math. {\bf 149} (11), 1789-1817 (2013).

\bibitem{bolognesi_bernardara:representability}
\bysame, \emph{Categorical representability
  and intermediate Jacobians of Fano threefolds}. EMS Ser. Congr. Rep., Eur. Math. Soc. (2013).

\bibitem{marcello-goncalo-chowgroups}
M.\ Bernardara and G.\ Tabuada, \emph{Chow groups of intersections of quadrics via homological projective duality and 
(Jacobians of) noncommutative motives}. Available at arXiv:1310.6020.

\bibitem{birlange} C. Birkenhake and H. Lange, {\it Complex Abelian Varieties}. Grundlehren 
der Math. Wissenschaften {\bf 302} (1992), Springer-Verlag.

\bibitem{blochmurreFano}
S.~Bloch and J.~P. Murre, \emph{On the {C}how group of certain types of {F}ano
  threefolds}. Compositio Math. \textbf{39} (1979), no.~1, 47--105.

\bibitem{bondal-orlov}
A.~Bondal and D.~Orlov, {\em Semiorthogonal decomposition for algebraic varieties}. Available at arXiv math.AG/9506012.

\bibitem{bondal_orlov:ICM2002}
\bysame,
\emph{Derived categories of coherent sheaves},
Proceedings of the International Congress of Mathematicians, Vol. II
(Beijing, 2002), 47--56, Higher Ed.\ Press, Beijing, 2002. 
  

\bibitem{bondorl-reconst}
\bysame, {\em Reconstruction of a variety from the derived category and groups of autoequivalences}, Compositio Math. 125 (2001), no. 3, 327--344.

\bibitem{bridge-flop}
T. Bridgeland, {\em Flops and derived categories}. Invent. Math. {\bf 147} (2002), 613--632.

\bibitem{Caldararu} A.~Caldararu and S.~Willerton, {\em The Mukai pairing I: a categorical approach}. New York J. math {\bf 16} (2010), 61-98.


\bibitem{ceresaverra}
G.~Ceresa and A.~Verra, \emph{The {A}bel-{J}acobi isomorphism for the sextic
  double solid}. Pacific J. Math. \textbf{124} (1986), no.~1, 85--105.

\bibitem{clemens4ic} C. H. Clemens, \emph{Double solids}. Advances in Math. {\bf 47} (1983), 107--230.

\bibitem{clemensgriffiths}
C. Clemens and P.~Griffiths, \emph{The intermediate {J}acobian of the cubic
  threefold}. Ann. Math. \textbf{95} (1972), 281--356.

\bibitem{debarre-qci} O. Debarre, {\em Le th\'eor\`eme de Torelli pour les intersections de trois quadriques}. Inv. Math. {\bf 95},
(1989), 507--528.


\bibitem{debarre-quartic-double} \bysame,
{\em Sur le th\'eor\`eme de Torelli pour les solides doubles quartiques}. Compositio Math. {\bf 73} (1990), no. 2, 161--187. 

\bibitem{debarre-iliev-manivel}
O.~Debarre, A.~Iliev and L.~Manivel, \emph{On the period map for prime {F}ano
threefolds of degree 10}. J. Algebraic Geometry, {\bf 21} (2012) 21--59.

\bibitem{donagi-torelli}
R.~Donagi, {\em Group law on the intersection of two quadrics}. Annali della Scuola Normale Superiore di Pisa - Classe di Scienze,
Ser. 4, 7 no. 2 (1980), p. 217--239. 

\bibitem{Griffiths} P.~Griffiths, {\em On the periods of certain rational integrals I, II}. Ann. of Math. (2) {\bf 90} (1969), 460--495.

\bibitem{gorch-gul-motives-and-repr} S.~Gorchinskiy and V.~Guletskii, \emph{Motives and representability of algebraic cycles on threefolds over a field} J. Algebraic Geom. {\bf 21} (2012), no. 2, 347--373. 

\bibitem{hartshorne} R. Hartshorne, \emph{Algebraic Geometry}. Graduate Texts in Mathematics {\bf 52}. Springer-Verlag, New York-Heidelberg, 1977.

\bibitem{huybrechts}
D. Huybrechts,
{\em Fourier-{M}ukai transforms in {A}lgebraic {G}eometry}.
Oxford Math. Monongraphs (2006).

\bibitem{iliev10}
A.~Iliev, \emph{The {F}ano surface of the {G}ushel threefold},
  Compositio Math. \textbf{94} (1994), no.~1, 81--107.

\bibitem{ilievV16}
\bysame, \emph{The {${\rm Sp}_3$}-{G}rassmannian and duality for prime {F}ano
  threefolds of genus 9}. Manuscripta Math. \textbf{112} (2003), no.~1, 29--53.

\bibitem{ilievamanovella}
A.~Iliev and L.~Manivel, \emph{Prime {F}ano threefolds and integrable systems},
  Math. Ann. \textbf{339} (2007), no.~4, 937--955.

\bibitem{iliev-marku}
A. Iliev, and D. Markushevich,
{\em The Abel-Jacobi map for cubic threefold and periods of Fano threefolds of degree 14}. Doc. Math. J. {\bf 5}, (2000), 23--47.

\bibitem{ilievmarkuV12}
\bysame, \emph{Elliptic curves and rank-2 vector bundles on the prime {F}ano
  threefold of genus 7}. Adv. Geom. \textbf{4} (2004), no.~3, 287--318.

\bibitem{isko-prok-fano}
V.~A. Iskovskikh and Yu.~G. Prokhorov, \emph{Fano varieties}. Algebraic geometry V. Volume {\bf 47} of Encyclopaedia Math. Sci., Springer, Berlin, 1999.

\bibitem{kanevdp1}
V.~I. Kanev, \emph{Intermediate {J}acobians of threefolds with a pencil of
  {D}el {P}ezzo surfaces and generalized {P}rym varieties}. C. R. Acad. Bulgare
  Sci. \textbf{36} (1983), no.~8, 1015--1017.

\bibitem{kanevdp2}
\bysame, \emph{Intermediate {J}acobians and {C}how groups of three-folds with a
  pencil of del {P}ezzo surfaces}. Ann. Mat. Pura Appl. (4) \textbf{154}
  (1989), 13--48.

\bibitem{ICM-Keller} B.~Keller, {\em On differential graded
    categories}. International Congress of Mathematicians (Madrid), Vol.~II,
  151--190, Eur.~Math.~Soc., Z{\"u}rich, 2006.

\bibitem{kollar-book}
J. Koll\`ar, {\em Rational Curves on Algebraic Varieties}. Ergebnisse der Math. {\bf 32}, Springer-Verlag,
Berlin, 1996.


\bibitem{IAS} M.~Kontsevich, {\em Noncommutative motives}. Talk at the Institute for Advanced Study on the occasion of the $61^{\mathrm{st}}$ birthday of Pierre Deligne, October 2005. Video available at \url{http://video.ias.edu/Geometry-and-Arithmetic}.    
    
\bibitem{Miami} \bysame, {\em Mixed noncommutative motives}. Talk at the Workshop on Homological Mirror Symmetry,
Miami (2010). Notes available at \url{www-math.mit.edu/auroux/frg/miami10-notes}.

\bibitem{finMot} \bysame, {\em Notes on motives in finite characteristic}.  Algebra, arithmetic, and geometry: in honor of Yu. I. Manin. Vol. II,  213--247, Progr. Math., {\bf 270}, BirkhŠuser Boston, MA, 2009.    

\bibitem{kuznet-v14}
A. Kuznetsov,
{\em Derived categories of cubic and $V_{14}$ threefolds}. Proc. Steklov Inst. Math. {\bf 246}, 171--194 (2004); translation from Tr. Mat. Inst. Steklova {\bf 246}, 183--207 (2004).

\bibitem{kuznet-grass}
\bysame, \emph{Homological projective duality for Grassmannians of lines}. Available at arXiv:0610957.

\bibitem{kuznetsov:hpd}
\bysame, \emph{Homological projective duality}. Publ. IH{\'E}S {\bf 105}\ (2007), 157--220.

\bibitem{kuznetfanothreefolds}
\bysame, \emph{Derived categories of {F}ano threefolds}. Proc. Steklov Inst. Math. {\bf 264} (2009), no. 1, 110--122.

\bibitem{kuznetbasechange} \bysame, \emph{Base change for semiorthogonal decompositions}.
Comp. Math. {\bf 147} (2011), no.~3,
852--876.

\bibitem{kuznetquadrics} \bysame, {\em Derived categories of quadric fibrations and intersections of quadrics}. Adv. in Math. {\bf 218} (2008), no.~5, 1340--1369.

\bibitem{laszlo} Y. Laszlo, {\em Th\'eor\`eme de Torelli g\'en\'erique pour les intersections compl\`etes de trois quadriques
de dimension paire}. Invent. Math. {\bf 98} (1989), no. 2, 247--264. 

\bibitem{logachevV10}
D.~Yu. Logach{\"e}v, \emph{Isogeny of the {A}bel-{J}acobi mapping for a {F}ano threefold of genus six}. Available at arXiv:0407147.
  
\bibitem{LO} V.~Lunts and D.~Orlov, {\em Uniqueness of enhancement for triangulated categories}. J. Amer. Math. Soc. {\bf 23} (2010), no.~3, 853--908. 

\bibitem{MT} M.~Marcolli and G.~Tabuada, {\em Jacobians of noncommutative motives}. Moscow Mathematical Journal
{\bf 14} (2014), no. 3, 577--594.

\bibitem{Semi} \bysame, {\em Noncommutative motives, numerical equivalence, and semi-simplicity}. American Journal of
Mathematics {\bf 136} (2014), no. 1, 59--75.

\bibitem{merindol} J.~Y. M\'erindol,
{\em Th\'eor\`eme de Torelli affine pour les intersections de deux quadriques}, Invent. Math. {\bf 80} (1985), no. 3, 375--416. 

\bibitem{mukai} S. Mukai, {\em Duality between $D(X)$ and $D(\widehat{X})$ and its application to Picard sheaves}. Nagoya Math. J., {\bf 81} 153--175, 1981.

\bibitem{mukaigranmisto}
\bysame, \emph{Noncommutativizability of {B}rill-{N}oether theory and
  {$3$}-dimensional {F}ano varieties}. S\=ugaku \textbf{49} (1997), no.~1,
  1--24.

\bibitem{mumford-book} D.~Mumford, {\em Abelian varieties}. Tata Institute of Fundamental Research Studies in Mathematics (1970).

\bibitem{orlovprojbund}
D.~O. Orlov, \emph{Projective bundles, monoidal transformations and derived
  categories of coherent sheaves}. Russian Math. Izv. \textbf{41} (1993),
  133--141.

\bibitem{orlov-represent}
\bysame,
{\em Derived categories of coherent sheaves and equivalences between them}.
Russian Math. Surveys {\bf 58} (2003), 511--591.

\bibitem{Pappas} G.~Pappas, {\em Integral Grothendieck-Riemann-Roch theorem}. Invent. Math. {\bf 170} (2007), no.~3, 455--481. 

\bibitem{polishchuk}
A.~Polishchuk, {\em Holomorphic bundles on 2-dimensional noncommutative toric orbifolds},
Noncommutative geometry and number theory, 341--359, Aspects Math., E37, Vieweg, Wiesbaden, 2006. 

\bibitem{reid:thesis}
M.\ Reid, \emph{The complete intersection of two or more quadrics}. Ph.D.\
  thesis, Trinity College, Cambridge. Available at \url{http://homepages.warwick.ac.uk/~masda/3folds/qu.pdf}

\bibitem{IMRN} G.~Tabuada, {\em Additive invariants of dg categories}. Int. Math. Res. Not. {\bf 53} (2005), 3309--3339.   

\bibitem{survey} \bysame, {\em A guided tour through the garden of noncommutative motives}. Clay Mathematics Proceedings, Volume {\bf 16} (2012), 259--276.

\bibitem{CvsNC} \bysame, {\em Chow motives versus noncommutative motives}. Journal of Noncommutative Geometry {\bf 7} (2013), no.~3, 767--786.

\bibitem{tihoquarticsolid}
A.~S. Tihomirov, \emph{The intermediate {J}acobian of double {${\bf P}^{3}$}
  that is branched in a quartic}. Izv. Akad. Nauk SSSR Ser. Mat. \textbf{44}
  (1980), no.~6, 1329--1377, 1439.

\bibitem{Toen} B.~To{\"e}n, {\em The homotopy theory of dg-categories and
    derived Morita theory}. Invent. Math. {\bf 167} (2007), no.~3, 615--667.

\bibitem{torelli} R.~Torelli, {\em Sulle variet\`a di Jacobi}. Atti Accad, Lincei Rend. C1. Sc. fis.
mat. nat. {\bf 22} (1913), 98--103.

\bibitem{Vial} C.~Vial, {\em Projectors on the intermediate algebraic Jacobians}. New York J. Math. {\bf 19} (2013) 793--822. 

\bibitem{vial-fibrations} \bysame, {\em Algebraic cycles and fibrations}. Documenta Math. {\bf 18} (2013) 1521--1553.

\bibitem{voisin-quartic-double} C.~Voisin, {\em Sur la jacobienne interm\'ediaire du double solide d'indice deux}. Duke Math. J. {\bf 57},
(1988), 629--646.


\end{thebibliography}
\end{document}